%% file: main.tex
\documentclass[a4paper,reqno]{amsart}

\usepackage[a4paper,left=3cm,right=3cm,top=3cm,bottom=3cm]{geometry}

\usepackage{graphicx} 
\usepackage{tikz}
\usetikzlibrary{patterns,shapes,decorations.pathmorphing,decorations.pathreplacing,calc,arrows,cd}
\usetikzlibrary{tikzmark, fit, matrix, decorations.markings}
\usepackage{mathdots}
\usepackage{amssymb}
\usepackage{amstext}
\usepackage{amsmath}
\usepackage{amscd}
\usepackage{amsthm}
\usepackage{amsfonts}
\usepackage{comment}

\usepackage{array}
\usepackage{enumerate}
\usepackage{latexsym}
\usepackage{mathrsfs}
\usepackage[hidelinks]{hyperref}

\usepackage[all,color]{xy}
\usepackage{mathtools}

\theoremstyle{plain}
\newtheorem{theorem}{Theorem}[section]
\newtheorem{proposition}[theorem]{Proposition}
\newtheorem{definition}[theorem]{Definition}
\newtheorem{corollary}[theorem]{Corollary}
\newtheorem{lemma}[theorem]{Lemma}
\theoremstyle{definition}
\newtheorem{remark}[theorem]{Remark}
\DeclareMathOperator{\vect}{\mathsf{vect}}

\DeclareMathOperator{\Vect}{\mathsf{Vect}}
\DeclareMathOperator{\colim}{\mathsf{colim}}
\DeclareMathOperator{\Ch}{\mathsf{Ch}}
\DeclareMathOperator{\Chnn}{\mathsf{Ch}_{\geq 0}}

\DeclareMathOperator{\Cone}{\mathrm{Cone}}
\DeclareMathOperator{\Mor}{\mathrm{Mor}}
\DeclareMathOperator{\Var}{\mathrm{Var}}
\newcommand{\Fun}{\mathrm{Fun}}
\DeclareMathOperator{\Ker}{\mathrm{Ker}}
\DeclareMathOperator{\Image}{Im}
\DeclareMathOperator{\Coker}{\mathrm{Coker}}

\DeclareMathOperator{\cI}{\mathcal{I}}
\DeclareMathOperator{\NN}{\mathbb{N}}
\DeclareMathOperator{\EE}{\mathbb{E}}
\DeclareMathOperator{\NNn}{\mathbb{N}}
\newcommand{\RR}{\mathbb{R}}
\newcommand{\ZZ}{\mathbb{Z}}
\newcommand{\cU}{{\mathcal{F}(\RR^d)}}
\newcommand{\add}{u^+}
\newcommand{\cR}{\mathcal{R}}
\DeclareMathOperator{\cA}{\mathcal{A}}
\DeclareMathOperator{\cP}{\mathcal{P}}
\DeclareMathOperator{\cB}{\mathcal{B}}

\DeclareMathOperator{\cW}{\mathcal{W}}
\DeclareMathOperator{\cM}{\mathcal{M}}
\DeclareMathOperator{\id}{\mathrm{id}}
\DeclareMathOperator{\rank}{\mathrm{rank}}

\DeclareMathOperator{\lf}{\mathcal{F}_{\mathrm{lf}}(\mathbb{R}^d)}

\keywords{Central limit theorems, persistent Betti numbers, random geometric graphs, magnitude homology}

\subjclass[2020]{Primary 60F05; Secondary 55N31, 18G35, 60D05}
\begin{document}

\title[Central Limit Theorems for Persistent Betti Numbers]{Central Limit Theorems for Persistent Betti Numbers}

\author[Shunsuke Tada]{Shunsuke Tada}
\address{Shunsuke Tada, Mathematical Science Center for Co-Creative Society, Tohoku University, 1-1, Katahira 2-chome Aoba-ku, Sendai, Miyagi 980-0812, Japan}
\email{shunsuke.tada.e6@tohoku.ac.jp}
\date{}

\begin{abstract}
Various persistent invariants have been developed in recent years. In this paper, we develop a method based on homological algebra for deriving central limit theorems for persistent Betti numbers associated with $\mathbb{R}$-indexed chain complexes constructed from a homogeneous Poisson point process of unit intensity on $\mathbb{R}^d$. Our method also applies to Gibbs point processes.
This algebraic method reduces the verification of the required conditions to checking properties of the kernels and cokernels of add one maps between chain complexes. As an application, we recover the known central limit theorem for persistent Betti numbers arising from simplicial complex filtrations. In addition, we prove a central limit theorem for persistent Betti numbers of $\ell_p$-Vietoris-Rips simplicial homology, including blurred magnitude homology of random geometric graphs. We also establish central limit theorems for relative $\ell_p$-Vietoris-Rips homology and, in particular, for magnitude Betti numbers of random geometric graphs. These results suggest that our algebraic method can be applied more broadly to derive central limit theorems for persistent Betti numbers.
\end{abstract}

\maketitle

\setcounter{tocdepth}{1}
\tableofcontents
\input{1_Introduction}
\input{2_Preliminaries}

\input{3_algebra_main}

\input{3.5_addone}

\input{5_Criterion}
\input{6_Examples}

\input{9_Appendix}

\section*{Acknowledgements.}

The author would like to thank Shunsuke Kano for raising the question of central limit theorems for magnitude homology and for suggesting that the class of applications be broadened from blurred magnitude homology
to \(\ell_p\)-Vietoris-Rips simplicial homology. 

This work was supported by JST CREST, Japan, Grant Number JPMJCR24Q6, and by JSPS KAKENHI Grant Number JP25K23328.

\section*{Use of generative AI.}
In writing this paper, the author used ChatGPT (GPT-5.6 Sol; OpenAI) to assist with English-language editing, the
organization of the exposition, and the exploration and preliminary
checking of mathematical arguments and proof strategies. The tool was also used to identify points requiring further clarification or verification in preliminary drafts. All AI-assisted text was independently checked and revised by the author. The author assumes full responsibility for the text of the paper.

\bibliographystyle{alpha} 
\bibliography{main.bib}
\end{document}

%% file: 1_Introduction.tex
\section{Introduction}
Various persistent homological invariants have been developed in recent years.
The most prominent example is persistent homology
\cite{EL02,carlsson2005computing}, one of the main tools in topological
data analysis \cite{Wa18}. It has found applications in a wide range of fields \cite{hiraoka2016hierarchical,CCR13, CM18,KOHY23}. 
More recently, several persistent invariants have also been introduced, including blurred magnitude homology \cite{Ott22}, persistent path homology \cite{CM18}, and persistent magnitude homology \cite{BFLW26}; see also \cite{MR20,F22,CR24}. 

A basic problem in random topology is to study topological invariants associated with random point configurations and with the graphs and simplicial complexes they generate \cite{Penrose2003,BobrowskiKahle2018}.
In the probabilistic study of persistent homology, the Poisson central limit theorem for ordinary Betti numbers established in \cite{YSEA17} was extended to persistent Betti numbers in \cite{HST18}.
Later studies established functional central limit theorems \cite{KH22} and multivariate central limit theorems \cite{KP25} for persistent Betti numbers; see also \cite{JKP26,BH24,HOS25,HOS26}.
A feature of these approaches is that the central limit theorem is reduced to stabilization of the effect of a local modification of the underlying point process.

Existing stabilization arguments for persistent Betti numbers often rely on two properties: the chain maps induced by adding a point, which we call \emph{add one maps} (see Section~\ref{sec:add one-stabilization}), are injective, and adding a point does not change the metric structure on the original point set (see, for example, \cite[Lemma~5]{KP25} and \cite[Lemma~5.3]{HST18}). These properties hold for ordinary simplicial complex filtrations but may fail for other natural constructions. For instance, adding a vertex to a random geometric graph may shorten the graph distances between existing vertices. Moreover, for relative constructions, including persistent magnitude homology of random geometric graphs, the resulting add one maps between chain complexes need not be injective. Consequently, the preceding stabilization arguments are no longer directly applicable.


The aim of this paper is to provide a unified method for deriving central limit theorems for persistent Betti numbers arising from
various homology theories.

Let \(\cU\) denote the category whose objects are the finite subsets
of \(\RR^d\) and whose morphisms are inclusions. Let 
\[
\mathcal{A}\colon
\cU
\longrightarrow
\Fun
\bigl(
\mathbb{R},
\Chnn(\vect_k)
\bigr)
\]
be a functor, and fix \(q\in\NN\) and \(r,s\in\mathbb{R}\)
with \(r\leq s\). The \(q\)th \((r,s)\)-persistent Betti number functional
associated with \(\mathcal{A}\) is
\begin{equation}\label{eq:PBN}
\beta_q^{r,s}(\mathcal{A})(F)
:=
\rank
\left(
H_q(\mathcal{A}(F)(r))
\rightarrow
H_q(\mathcal{A}(F)(s))
\right).    
\end{equation}
Here, \(\cA\) should be viewed as an abstract model for a construction
that assigns an \(\RR\)-indexed chain complex to each finite point
cloud. For example, this setting includes chain complexes of ordinary simplicial complex filtrations and of \(\ell_p\)-Vietoris-Rips simplicial sets, including those underlying
blurred magnitude homology, as well as relative constructions such as those underlying persistent magnitude homology. 

To state our main result, we introduce the following notion. For $t \in \RR,$ we say that \(\mathcal{A}\) is \emph{admissible at \(t\)} if it satisfies the following conditions:
\begin{enumerate}
\item
\(\mathcal{A}\) is translation invariant.
\item
\(\mathcal{A}\) satisfies \emph{add one kernel-cokernel stabilization at
\(t\)} (Definition~\ref{def:add one-kernel-cokernel-stabilization}).
\item
\(\mathcal{A}\) satisfies \emph{add one kernel-cokernel boundedness} (Definition~\ref{def:boundedness}).
\end{enumerate}
Together with the measurability of the associated persistent Betti number functional, these conditions imply the translation invariance, stabilization, and moment assumptions in the central limit theorem of Penrose and Yukich \cite[Theorem~3.1]{PY01}. This leads to one of the main results of this paper.

\begin{theorem}[Theorem~\ref{thm:clt_main}]
\label{thm:clt-main} Fix \(q\in\mathbb N\) and \(r,s\in\mathbb R\) with \(r\le s\).
Let
$
\mathcal{A}\colon
\cU
\to
\Fun
\bigl(
\mathbb{R},
\Chnn(\vect_k)
\bigr)
$ be admissible at \(r\) and \(s\). Assume  that  the functional
$
\beta_q^{r,s}(\cA )
$
is measurable.

Let \(\mathcal{P}\) be a homogeneous Poisson point process on \(\RR^d\) with unit intensity. Then,  there exists a constant $
\sigma^2_{q,r,s}\in[0,\infty)
$ such that 
\[
\frac{\Var\!\left[
\beta_q^{r,s}(\cA)
(\mathcal{P}_{\Lambda_n})
\right]}{n}
\longrightarrow
\sigma_{q,r,s}^2 \quad
\text{ and }\quad 
\frac{
\beta_q^{r,s}(\cA)
\bigl(\mathcal{P}_{\Lambda_n}\bigr)
-
\mathbb{E}\!\left[
\beta_q^{r,s}(\cA)
(\mathcal{P}_{\Lambda_n})
\right]
}{
\sqrt{n}
}
\xrightarrow{d}
\mathcal{N}(0,\sigma_{q,r,s}^2)
\]
as \(n\to\infty\).

Here,
$
\Lambda_n
:=
\left[
-\frac{1}{2}n^{1/d},
\frac{1}{2}n^{1/d}
\right)^d$ and 
$
\mathcal{P}_{\Lambda_n}
$
denote the restriction of $\cP$ to $\Lambda_n$. 
Moreover, \(\mathcal{N}(\mu,\sigma^2)\) denotes the normal distribution
with mean \(\mu\) and variance $\sigma^2$, and
\(\xrightarrow{d}\) denotes convergence in distribution.
\end{theorem}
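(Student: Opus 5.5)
The plan is to reduce the statement to the Poisson central limit theorem of Penrose and Yukich \cite[Theorem~3.1]{PY01}, applied to the functional $H:=\beta_q^{r,s}(\cA)$ on finite subsets of $\RR^d$. That theorem needs four inputs: translation invariance of $H$, a uniform moment bound on the add one cost $D_0H(F)=H(F\cup\{0\})-H(F)$ (a bounded fourth, or $(4+\epsilon)$th, moment over restrictions of $\cP$ to cubes, or more generally to increasing windows $W$, containing $0$), and strong stabilization of $D_0H$ along $\cP\cap W$ as $W\uparrow\RR^d$. Measurability of $H$ is assumed in the statement. I therefore check the remaining three conditions in turn. The conclusion then follows verbatim, including the identification of $\sigma^2_{q,r,s}$ as the limit of $\Var/n$.

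\emph{Translation invariance.} Translation invariance of $\cA$ gives, for every translation $\tau$, a natural isomorphism $\cA(\tau F)\cong\cA(F)$ of $\RR$-indexed chain complexes. It commutes with the structure maps $r\le s$, so the ranks of $H_q(\cA(F)(r))\to H_q(\cA(F)(s))$ agree. Hence $H(\tau F)=H(F)$.

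\emph{Algebraic core.} The main step is to bound and localize $D_0H$ using the add one chain map $\iota\colon\cA(F)\to\cA(F\cup\{0\})$, which comes from functoriality. Write $K$ and $C$ for its kernel and cokernel, which are $\RR$-indexed chain complexes. I would prove a purely homological lemma bounding $|\beta_q^{r,s}(\cA(F\cup\{0\}))-\beta_q^{r,s}(\cA(F))|$ by a sum of dimensions of $K_j(t)$ and $C_j(t)$ for $t\in\{r,s\}$ and $j\in\{q-1,q,q+1\}$. To do this, factor $\iota$ as a surjection $\cA(F)\to\Image\iota$ followed by an injection $\Image\iota\to\cA(F\cup\{0\})$. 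Then use the long exact sequences of $0\to K\to\cA(F)\to\Image\iota\to0$ and $0\to\Image\iota\to\cA(F\cup\{0\})\to C\to0$, together with the elementary fact that the rank of a map between homologies changes by at most the dimensions of the kernel and cokernel of the comparison maps. The same decomposition should give a localization statement: $D_0H(F)$ is determined by $(K,C)$ and their relation to the ambient complexes at levels $r,s$. Add one kernel-cokernel boundedness (Definition~\ref{def:boundedness}) should then bound these dimensions by a quantity, such as a polynomial in the number of points of $F$ in a ball around $0$, whose moments under the Poisson process are uniformly finite. That yields the moment condition.

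\emph{Stabilization.} I expect this to be the main obstacle. Add one kernel-cokernel stabilization at $r$ and $s$ (Definition~\ref{def:add one-kernel-cokernel-stabilization}) should give an almost surely finite radius $R$ beyond which $K$, $C$, and the maps connecting them to the homology of $\cA(\cP\cap W)$ no longer change as $W$ grows. The difficulty is that $D_0H$ depends on global homology, not only on the local complexes $K$ and $C$. The first thing I would try is to express $D_0H$ exactly as an alternating combination of ranks of connecting homomorphisms and images in the two long exact sequences, at both $r$ and $s$. I would then show that each such rank is monotone in $W$ and bounded by the local dimensions. An integer-valued, monotone, bounded sequence is eventually constant, which gives strong stabilization with a random radius. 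If monotonicity fails because add one maps are not injective, I would instead use the stabilization definition directly, which presumably is formulated so that these ranks stabilize. Combining these three checks with \cite[Theorem~3.1]{PY01} completes the proof.
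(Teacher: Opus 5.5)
Your translation-invariance step and your moment bound are correct and match the paper. The homological lemma you describe is exactly Proposition~\ref{prop:local-boundedness}. It uses Lemma~\ref{lem:kercoker}, the image factorization, and the two long exact sequences to bound $|D_0\beta_q^{r,s}(\cA)(F)|$ by dimensions of $\Ker(u^+_{F,t})$ and $\Coker(u^+_{F,t})$ in degrees $q-1,q,q+1$ for $t\in\{r,s\}$. Boundedness then gives $C(1+\#\cP_{B_0(R)})^m$ uniformly over windows. A minor point: what \cite[Theorem~3.1]{PY01} requires, and what you would actually obtain, is \emph{weak} stabilization. That means $D_0H(\cP\cap W)$ is constant once $W$ contains a random finite $F_0\subseteq\cP$. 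Strong stabilization would require invariance under adding arbitrary points outside a ball, which you do not get.

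The genuine gap is the stabilization step. Consider the long exact sequences of $0\to K\to\cA(F)\to\Image\iota\to0$ and $0\to\Image\iota\to\cA(F\cup\{0\})\to C\to0$, taken separately at $r$ and at $s$. They only control the level-wise differences $\dim_k H_q(\cA(F\cup\{0\})(t))-\dim_k H_q(\cA(F)(t))$. They cannot express the change in the rank of the persistence map $H_q(\cA(F)(r))\to H_q(\cA(F)(s))$, because that change depends on cross-level data. For instance, if $\iota=0$ both sequences are trivial, yet $D_0H=\rank(H_q(C(r))\to H_q(C(s)))-\rank(H_q(K(r))\to H_q(K(s)))$. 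Your fallback is also unavailable. Add one kernel-cokernel stabilization is a chain-level, degreewise condition on $K$ and $C$ and says nothing directly about ranks.

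What is missing is a device that turns persistent ranks into homology dimensions, so that your monotone-and-bounded mechanism can apply. The paper uses mapping cones. Lemma~\ref{lem:mapping-cone-rank-identity} gives, for the structure map $f$,
$\dim_k\Image H_q(f)=\dim_k H_q(\cA(F)(s))+\dim_k H_{q-1}(\cA(F)(r))-\dim_k H_q(\Cone(f))-\dim_k\Image H_{q-1}(f)$,
and likewise for $g$ on $F\cup\{0\}$. The add one maps at $r$ and $s$ form a morphism $f\to g$ of arrows, hence a chain map $\Cone(f)\to\Cone(g)$. Its kernel and cokernel are $\Cone(\widehat f)$ and $\Cone(\overline g)$, the cones of the induced maps $K(r)\to K(s)$ and $C(r)\to C(s)$. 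These stabilize degreewise, so the cone map again satisfies kernel-cokernel stabilization (Proposition~\ref{prop:cone-homology-difference-stability}).

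Your monotonicity idea is then exactly right for the level-wise differences (Lemma~\ref{lem:stabi}). Once the cokernel has stabilized, the kernels of the connecting maps inject into one another inside a fixed finite-dimensional $H_q$. Dually, the images of the connecting maps grow inside $H_{q-1}$ of the stabilized kernel. Apply this to $u^+_r$, $u^+_s$ and the cone map. Because the rank identity involves $\Image H_{q-1}(f)$, you must then induct on $q$ starting from $q=0$, where $H_{-1}=0$ since the complexes are concentrated in nonnegative degrees. This yields Theorem~\ref{cor:main_result}, hence eventual constancy of $D_0H$ along finite subsets of $\cP$ containing a random $F_0$.
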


Similarly, with the central limit theorem of Hirsch, Otto, and Svane \cite[Theorem~3]{HOS25}, we obtain the following.

\begin{theorem}[Theorem~\ref{thm:gibbs-clt-main}]
\label{thm:gibbs-clt_main} Fix \(q\in\mathbb N\) and \(r,s\in\mathbb R\) with \(r\le s\).
Let $
\cA\colon
\mathcal{F}(\mathbb{R}^{d})
\rightarrow
\Fun(
 \mathbb{R},
 \Chnn(\vect_{k}))
$ be admissible at $r$ and $s$. Assume that
$\beta_{q}^{r,s}(\cA)$ is measurable.

Let $\mathcal{X}$ be an infinite-volume Gibbs point process with
translation-invariant Papangelou conditional intensity satisfying condition
\eqref{eq:gibbs-process-assumption}. Then there exists
$
\sigma_{q,r,s}^{2}\in[0,\infty)$ such that
\[
\frac{
\operatorname{Var}\left[
 \beta_{q}^{r,s}(\cA)(\mathcal{X}_{Q_{n}})
\right]}{ n^{d}}
\to
\sigma_{q,r,s}^{2}
\text{ and }
\frac{\left(
 \beta_{q}^{r,s}(\cA)(\mathcal{X}_{Q_{n}})
 -
 \mathbb{E}\left[
  \beta_{q}^{r,s}(\cA)(\mathcal{X}_{Q_{n}})
 \right]
\right)}{n^{d/2}}
\xrightarrow{d}
\mathcal{N}(0,\sigma_{q,r,s}^{2})\]
as $n\to\infty$, where $Q_n :=[-n/2, n/2 ]^d.$
\end{theorem}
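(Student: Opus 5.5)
The plan is to verify the hypotheses of the Gibbs central limit theorem of Hirsch, Otto, and Svane \cite[Theorem~3]{HOS25} for the functional $H:=\beta_q^{r,s}(\cA)$, in the same way Theorem~\ref{thm:clt-main} is obtained from \cite[Theorem~3.1]{PY01}. Roughly, \cite[Theorem~3]{HOS25} requires four things for a measurable functional $H$ on finite configurations:
\begin{enumerate}
\item translation invariance;
\item a stabilization property for the add one cost $D_xH(\mathcal X_{Q})=H(\mathcal X_Q\cup\{x\})-H(\mathcal X_Q)$ along increasing windows;
\item a uniform moment bound of order slightly above $2$ (or $4$) on these add one costs, including costs of adding a point to a configuration with a bounded number of additional points;
\item the condition \eqref{eq:gibbs-process-assumption} on the Papangelou intensity, which is assumed.
\end{enumerate}
Measurability is assumed. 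The proof therefore reduces to showing that admissibility at $r$ and $s$ supplies items (1)--(3).

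Translation invariance of $H$ is immediate from translation invariance of $\cA$, because isomorphic $\RR$-indexed chain complexes have equal persistent ranks. For stabilization, I will use the algebraic reduction developed earlier in the paper. It bounds the change $|\beta_q^{r,s}(\cA)(F\cup\{x\})-\beta_q^{r,s}(\cA)(F)|$ by dimensions of kernels and cokernels of the add one chain map $\cA(F)\to\cA(F\cup\{x\})$ at the levels $r$ and $s$, in degrees $q$ and $q\pm1$.

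Add one kernel-cokernel stabilization at $t\in\{r,s\}$ (Definition~\ref{def:add one-kernel-cokernel-stabilization}) is meant to say the following. There is an a.s.\ finite radius beyond which enlarging the configuration does not change the relevant kernel and cokernel contributions, so the homology-level images stabilize. Combining the two levels through the rank of the composite $H_q(r)\to H_q(s)$, I will show that $D_0H(\mathcal X_{Q_n})$ is eventually constant, and likewise for the Palm-type versions required in \cite{HOS25}.

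The moment condition will come from add one kernel-cokernel boundedness (Definition~\ref{def:boundedness}). This gives a deterministic bound on the add one cost by the number of points in a fixed-radius ball around $x$, raised to some power. Under a Gibbs process with bounded, translation-invariant Papangelou intensity, such local counts are stochastically dominated by Poisson counts. This is the point where \eqref{eq:gibbs-process-assumption} enters. All moments are then uniformly finite, and the same holds after adding finitely many extra points.

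The main obstacle I expect is matching the precise form of stabilization required in \cite{HOS25}. That theorem is stated via a version of weak stabilization plus control of the effect of boundary points under a Gibbs (not independent) law, and possibly with a radius of stabilization that has suitable tails. The Poisson argument only needs a.s.\ convergence. My first attempt would be to show that the kernel-cokernel stabilization radius is a deterministic function of the configuration in a neighbourhood. Its a.s.\ finiteness would then transfer from Poisson to Gibbs by stochastic domination (thinning or coupling under a bounded Papangelou intensity). If \cite{HOS25} requires a tail bound on this radius, I would instead extract it from the local-count control used for boundedness.

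Once these checks are done, \cite[Theorem~3]{HOS25} yields the stated variance asymptotics with normalization $n^d=|Q_n|$, together with the Gaussian limit.
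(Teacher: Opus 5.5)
Your architecture matches the paper's: verify the hypotheses of Lemma~\ref{lem:gibbs-clt} for $H=\beta_q^{r,s}(\cA)$, with translation invariance coming from $\cA$, stabilization from Proposition~\ref{prop:deterministic-weakly-stab}, and moments from Proposition~\ref{prop:local-boundedness}. The gap is the interface with \cite[Theorem~3]{HOS25}, which you leave as the ``main obstacle''. The hypotheses of that theorem are not add one conditions. Condition \eqref{eq:gibbs-weak-stabilization} asks that $H(\mathcal{X}\cap A_n)-H((\mathcal{X}\cap A_n)\setminus Q_l)$ converge a.s.\ for every $l$. Condition \eqref{eq:gibbs-moment-condition} is a fifth-moment bound, polynomial in $m$, on the effect of deleting all points of $\mathcal{X}\cap(Q_m+z)$. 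Nothing in your plan derives these from the add one statements. The remedies you sketch for stabilization are also beside the point. There is no random stabilization radius to transfer from Poisson to Gibbs. Proposition~\ref{prop:deterministic-weakly-stab} is deterministic, so $D_0H(P_{A_n})$ is eventually constant for \emph{every} locally finite $P$ and every window sequence tending to $\RR^d$; this is \cite[Equation~(12)]{HOS25}. A tail bound is neither required by \cite{HOS25} nor obtainable from admissibility, which gives no quantitative control over where the index produced by Theorem~\ref{cor:main_result} lies.

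The paper closes the gap by citing two results from \cite{HOS25}. Lemma~34 there upgrades (12) to \eqref{eq:gibbs-weak-stabilization}. Lemma~33 there upgrades the pointwise bound $|D_yH(F)|\le c\exp(c\,\#(F\cap B_y(R)))$ to \eqref{eq:gibbs-moment-condition}; this bound comes from Proposition~\ref{prop:local-boundedness} together with $D_yH(F)=D_0H(F-y)$. You can also do both steps by hand by telescoping. Let $x_1,\dots,x_k$ be the points of $P\cap Q_l$, take $n$ large enough that all $x_i\in A_n$, and set $P^{(i)}:=P\setminus\{x_i,\dots,x_k\}$. Then
\[
H(P_{A_n})-H(P_{A_n}\setminus Q_l)=\sum_{i=1}^{k}D_0H\bigl((P^{(i)}-x_i)\cap(A_n-x_i)\bigr).
\]
Proposition~\ref{prop:deterministic-weakly-stab}, applied to each locally finite set $P^{(i)}-x_i$, makes every summand eventually constant. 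The same telescoping over $\mathcal{X}\cap Q_n\cap(Q_m+z)$ bounds the difference in \eqref{eq:gibbs-moment-condition} by $\sum_{x\in\mathcal{X}\cap(Q_m+z)}C(1+\#(\mathcal{X}\cap B_x(R)))^{\mu}$, with $C,R,\mu$ from Proposition~\ref{prop:local-boundedness}. This bound is increasing in the configuration. Your domination of $\mathcal{X}$ by a Poisson process of intensity $\alpha_0$ therefore yields the required fifth-moment bound, polynomial in $m$.
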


Before turning to applications of our theorems, we note several
advantages of our algebraic formulation.
\begin{itemize}
\item Our method does not require add one maps to be injective or adding a point to preserve the metric structure on the original point set. 

\item Conditions~(2) and~(3) are formulated in terms of the kernels and cokernels of add one maps at the chain level. This means that weak stabilization and bounded moment conditions can be verified without explicitly tracking the cycle or boundary groups.

\item Admissibility is preserved under standard constructions in an abelian category, including kernels, cokernels, and images of structure morphisms; see Proposition~\ref{prop:relative}(3) for details. 
In particular, this closure property allows us to apply our theorem to relative persistent Betti number functionals. 
\end{itemize}

We now turn to applications of the theorem to several homology theories.
We first recover the known central limit
theorem for persistent Betti numbers established in \cite[Theorem~5.2]{HST18}.  
We then obtain central limit theorems for persistent Betti numbers arising from the \(\ell_p\)-Vietoris-Rips homology and from the relative \(\ell_p\)-Vietoris-Rips homology  of Poisson random geometric graphs. These applications include blurred magnitude homology and  persistent magnitude homology of Poisson random geometric graphs. As an illustration, Figure~\ref{fig:clt-mag} presents the magnitude Betti numbers in bidegree $(1,1)$.

\begin{figure}[htbp]
  \centering
  \includegraphics[width=15cm]{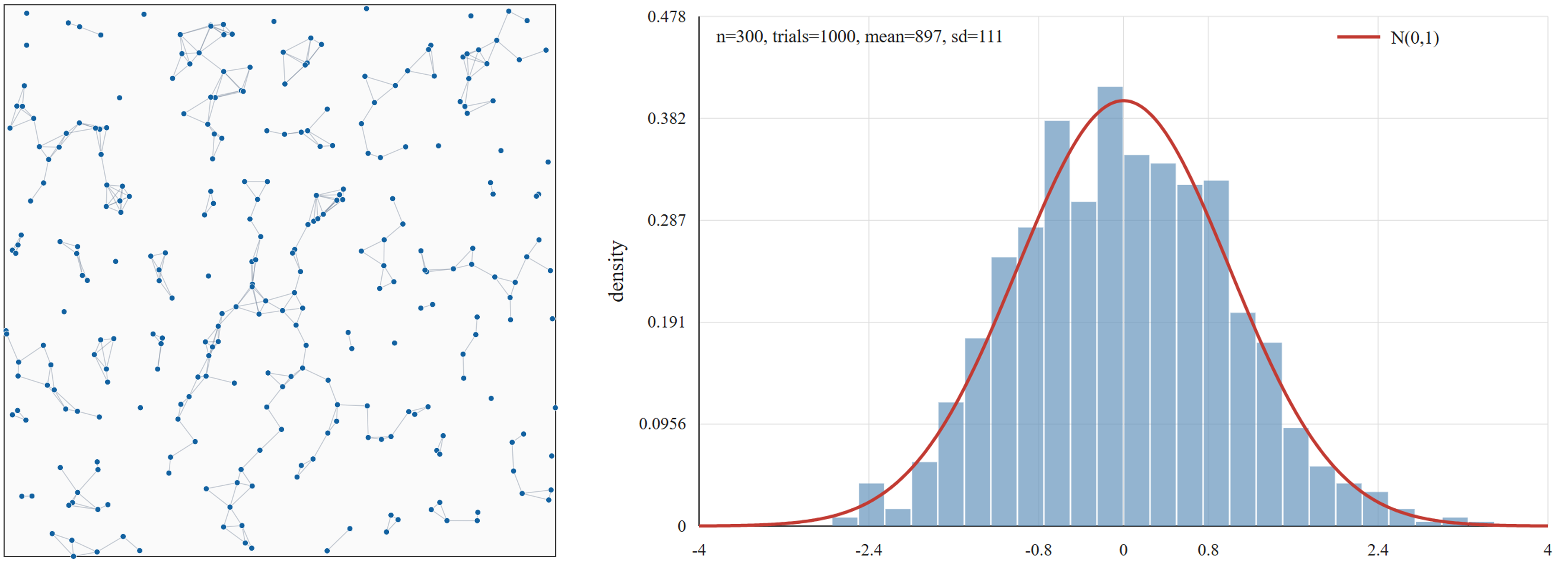} 
 \caption{A numerical illustration of
Corollary~\ref{cor:clt_PMG_graph} for magnitude homology of random geometric graphs. Left: a Poisson random geometric graph in the observation window \(\Lambda_{300}\). Right: the standardized distribution of
the magnitude Betti number in bidegree $(1,1)$,
based on \(1000\) independent trials. The solid curve represents the density of the standard normal
distribution.}
  \label{fig:clt-mag}
\end{figure}

We also discuss extended persistent Betti numbers introduced in \cite{JKP26}. If
$
\cA\colon
\cU
\rightarrow
\Fun(\mathbb{R},\Chnn(\vect_k))
$
is pointwise admissible, then so
is the transformed functor  defining the extended
persistent Betti numbers; see
Proposition~\ref{prop:extended-preservation}.

\medskip

\medskip

\noindent\textbf{Notation.}
Throughout this paper, \(k\) denotes a field. We write \(\Vect_k\) for
the category of \(k\)-vector spaces and \(\vect_k\) for its full
subcategory of finite-dimensional \(k\)-vector spaces. We denote by
\(\Ch(\Vect_k)\) the category of chain complexes of \(k\)-vector spaces
and by \(\Chnn(\Vect_k)\) its full subcategory consisting of chain
complexes concentrated in nonnegative degrees. Similarly,
\(\Ch(\vect_k)\) denotes the full subcategory of \(\Ch(\Vect_k)\)
consisting of degreewise finite-dimensional chain complexes, and
\(\Chnn(\vect_k)\) denotes its full subcategory consisting of chain
complexes concentrated in nonnegative degrees.
We write \(\NN:=\{0,1,2,\ldots\}\) for the set of nonnegative integers.

%% file: 2_Preliminaries.tex
\section{Algebraic preliminaries}
\label{sec:algebraic-preliminaries}

In this section, we recall the basics of  homological algebra and category theory used throughout the
paper see; \cite{Weibel1994} and \cite{MR98} for details.

\subsection{Filtered colimits}
\label{subsec:filtered-colimits}

For categories $\mathcal{C}$ and $\mathcal{D}$, we write
$\Fun(\mathcal{C},\mathcal{D})$ for the functor category. If $\mathcal{D}$ is an abelian category, then $\Fun(\mathcal{C},\mathcal{D})$ is also abelian, and the image, kernel, and cokernel of a morphism in $\Fun(\mathcal{C},\mathcal{D})$  are computed objectwise. The arrow category of $\mathcal{C}$ is $\Mor(\mathcal{C}):= \Fun(\mathsf{2},\mathcal{C})$, where \(\mathsf{2}\) is the category associated with the ordered set \(0<1\). That is, the objects of this category are morphisms in
$\mathcal{C}$ and a morphism from $f\colon X\to Y$ to $g\colon Z\to W$ is a
pair $(u,v)$ fitting into a commutative diagram
\[
\begin{CD}
X @>{f}>> Y\\
@V{u}VV @VV{v}V\\
Z @>{g}>> W.
\end{CD}
\]

A \emph{filtered poset} is a nonempty poset $(\cI,\leq)$ such that every pair $A,B\in\cI$ admits a common upper bound. We regard $\cI$ as a category with a unique morphism
$A\to B$ whenever $A\leq B$. A functor from $\cI$ to a category $\mathcal{C}$ will be called an $\cI$-indexed diagram in $\mathcal{C}$. Let $\cI$ be a filtered poset. The filtered colimit functor
$
\colim \colon
\Fun(\cI,\Vect_k)
\to
\Vect_k
$
preserves colimits and finite limits. In particular, it is exact.

\subsection{Chain complexes, exact sequences, and mapping cones}
\label{subsec:chain-complexes-and-cones}

For a chain complex $C\in\Ch(\Vect_k)$, write
$
\partial_q^C\colon C_q\to C_{q-1}
$
for its differential, and set
$
Z_q(C):=\Ker\partial_q^C$, $B_q(C):=\Image \partial_{q+1}^C$, $H_q(C):=Z_q(C)/B_q(C)$.
For a chain map $f\colon C\to D$, the induced map on homology is denoted by
$
H_q(f)\colon H_q(C)\to H_q(D).
$
Let
$
0\to C\xrightarrow{}D\xrightarrow{}E\to 0
$ be a short exact sequence of chain complexes. We denote the
connecting homomorphism in the associated long exact sequence by
$
\delta_q\colon H_q(E)\to H_{q-1}(C).
$

\begin{lemma}
\label{lem:dimension-identities-long-exact-sequence}
Let $
0\to C\xrightarrow{i}D\xrightarrow{p}E\to 0$
be a short exact sequence of chain complexes, and let $q\in\mathbb{Z}$. Assume
that all the homology groups appearing in the formulas below are
finite-dimensional. Then
\[
\begin{aligned}
\dim_k\Image  H_q(i)
=
\dim_k H_q(D)+\dim_k H_{q-1}(C)-\dim_k H_q(E)-\dim_k\Image  H_{q-1}(i),
\end{aligned}
\]
and
\[
\dim_k H_q(D)-\dim_k H_q(C)
=
\dim_k\Ker\delta_q+\dim_k\Ker\delta_{q+1}
-\dim_k H_{q+1}(E).
\]
\end{lemma}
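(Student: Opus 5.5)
Both identities follow from exactness of the long exact sequence
\[
\cdots\to H_{q+1}(E)\xrightarrow{\delta_{q+1}} H_q(C)\xrightarrow{H_q(i)} H_q(D)\xrightarrow{H_q(p)} H_q(E)\xrightarrow{\delta_q} H_{q-1}(C)\xrightarrow{H_{q-1}(i)} H_{q-1}(D)\to\cdots
\]
combined with rank–nullity for each map. Exactness gives $\Image H_q(i)=\Ker H_q(p)$, $\Image H_q(p)=\Ker\delta_q$ and $\Image\delta_q=\Ker H_{q-1}(i)$. The only care needed is that every space whose dimension we use is finite-dimensional.

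For the first identity, apply rank–nullity to $H_q(p)$:
\[
\dim \Image H_q(i)=\dim\Ker H_q(p)=\dim H_q(D)-\dim\Image H_q(p)=\dim H_q(D)-\dim\Ker\delta_q .
\]
Next apply rank–nullity to $\delta_q$, giving $\dim\Ker\delta_q=\dim H_q(E)-\dim\Image\delta_q$. Exactness at $H_{q-1}(C)$ then gives
\[
\dim\Image\delta_q=\dim\Ker H_{q-1}(i)=\dim H_{q-1}(C)-\dim\Image H_{q-1}(i).
\]
Substituting the last two equalities into the first yields
\[
\dim\Image H_q(i)=\dim H_q(D)-\dim H_q(E)+\dim H_{q-1}(C)-\dim\Image H_{q-1}(i),
\]
which is the claimed formula.

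For the second identity, split $\dim H_q(D)=\dim\Image H_q(i)+\dim\Image H_q(p)$. The second summand equals $\dim\Ker\delta_q$ by exactness. For the first, exactness at $H_q(C)$ and rank–nullity for $H_q(i)$ give
\[
\dim\Image H_q(i)=\dim H_q(C)-\dim\Ker H_q(i)=\dim H_q(C)-\dim\Image\delta_{q+1}.
\]
Rank–nullity for $\delta_{q+1}$ gives $\dim\Image\delta_{q+1}=\dim H_{q+1}(E)-\dim\Ker\delta_{q+1}$. Combining these,
\[
\dim H_q(D)-\dim H_q(C)=\dim\Ker\delta_q+\dim\Ker\delta_{q+1}-\dim H_{q+1}(E).
\]

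There is no real obstacle; the only point to check is finiteness. Every space whose dimension enters the argument is a subspace or quotient of one of $H_q(D)$, $H_q(E)$, $H_{q-1}(C)$, $H_q(C)$ or $H_{q+1}(E)$, which are finite-dimensional by assumption. Hence all subtractions are legitimate, and no infinite quantities are cancelled.
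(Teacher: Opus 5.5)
Your argument is correct and is the same as the paper's proof. The paper simply says to take dimensions in the six-term segment $H_{q+1}(E)\to H_q(C)\to H_q(D)\to H_q(E)\to H_{q-1}(C)\to H_{q-1}(D)$ of the long exact sequence; you have written out that rank--nullity bookkeeping explicitly, and your finiteness check is sound. In fact, the second identity needs only $H_q(C)$, $H_q(D)$ and $H_{q+1}(E)$ to be finite-dimensional, since $\Ker\delta_q=\Image H_q(p)$ is a quotient of $H_q(D)$.
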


\begin{proof}
Both equalities follow by taking dimensions in the exact sequence
\[
H_{q+1}(E)\xrightarrow{\delta_{q+1}}H_q(C)
\xrightarrow{H_q(i)}H_q(D)\xrightarrow{H_q(p)}H_q(E)
\xrightarrow{\delta_q}H_{q-1}(C)
\xrightarrow{H_{q-1}(i)}H_{q-1}(D).
\]
\end{proof}

For a chain map $f\colon C\to D$, its mapping cone is the chain complex
$\Cone (f)$ given by
\[
\Cone (f)_q:=D_q\oplus C_{q-1}, \quad 
\partial_q^{\Cone (f)}(y,x)
:=
(\partial_q^D (y)+f_{q-1}(x),-\partial_{q-1}^C (x)).
\]
Then, there is a short exact sequence
$
0\to D\to\Cone (f)
\to C[1]\to 0,
$
where we put $
C[1]_q:=C_{q-1}$, $
\partial_q^{C[1]}:=-\partial_{q-1}^C.
$ For this sequence, we have the long exact sequence
\begin{equation}
\label{eq:mapping-cone-long-exact-sequence}
\begin{aligned}
\cdots\to
H_q(C)\xrightarrow{H_q(f)}H_q(D)
&\to H_q(\Cone (f))
\to H_{q-1}(C)\\
&\xrightarrow{H_{q-1}(f)}H_{q-1}(D)
\to\cdots.
\end{aligned}
\end{equation}
Note that \(H_q(\Cone (f))\) is finite-dimensional
whenever \(H_q(D)\) and \(H_{q-1}(C)\) are finite-dimensional, since
the long exact sequence yields
\begin{equation}\label{eq:fin-dim-cone}
\dim_k H_q(\Cone (f))
\leq
\dim_k H_q(D)+\dim_k H_{q-1}(C).    
\end{equation}

\begin{lemma}
\label{lem:mapping-cone-rank-identity}
Let $f\colon C\to D$ be a chain map, and let $q\in\mathbb{Z}$. Assume that all
the homology groups appearing in the formula below are finite-dimensional.
Then
\[
\begin{aligned}
\dim_k\Image  H_q(f)
={}&
\dim_k H_q(D)+\dim_k H_{q-1}(C)\\
&-\dim_k H_q(\Cone (f))
-\dim_k\Image  H_{q-1}(f).
\end{aligned}
\]
\end{lemma}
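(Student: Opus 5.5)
The identity follows by applying the first formula of Lemma~\ref{lem:dimension-identities-long-exact-sequence} to the mapping-cone sequence. It is more transparent, though, to take dimensions directly in the exact sequence \eqref{eq:mapping-cone-long-exact-sequence}, so I will argue that way. I consider the six-term exact segment
\[
H_q(C)\xrightarrow{H_q(f)}H_q(D)\xrightarrow{j}H_q(\Cone (f))\xrightarrow{\partial}H_{q-1}(C)\xrightarrow{H_{q-1}(f)}H_{q-1}(D).
\]
Here $j$ is induced by the inclusion $D\to\Cone(f)$ and $\partial$ by the projection $\Cone(f)\to C[1]$. The hypothesis gives finiteness of $H_q(D)$, $H_{q-1}(C)$ and $H_q(\Cone(f))$, so every kernel and image below is finite-dimensional.

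The computation uses rank-nullity at each of the three middle maps together with exactness at each node:
\begin{align*}
\dim_k H_q(D)&=\dim_k\Ker j+\dim_k\Image j=\dim_k\Image H_q(f)+\dim_k\Image j,\\
\dim_k H_q(\Cone (f))&=\dim_k\Ker\partial+\dim_k\Image\partial=\dim_k\Image j+\dim_k\Image\partial,\\
\dim_k H_{q-1}(C)&=\dim_k\Ker H_{q-1}(f)+\dim_k\Image H_{q-1}(f)=\dim_k\Image\partial+\dim_k\Image H_{q-1}(f).
\end{align*}
Eliminating $\dim_k\Image j$ and $\dim_k\Image\partial$ gives
\[
\dim_k\Image H_q(f)=\dim_k H_q(D)-\dim_k H_q(\Cone (f))+\dim_k H_{q-1}(C)-\dim_k\Image H_{q-1}(f),
\]
which is the claimed formula.

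The only point needing care is identifying the maps in the long exact sequence of $0\to D\to\Cone(f)\to C[1]\to 0$. Its connecting homomorphism $H_{q-1}(C)\cong H_q(C[1])\to H_{q-1}(D)$ must coincide with $H_{q-1}(f)$, possibly up to a sign. I will check this by lifting a cycle $x\in C_{q-1}$ to $(0,x)\in\Cone(f)_q$ and applying the differential, which gives $(f_{q-1}(x),0)$; so the connecting map equals $H_{q-1}(f)$ on the nose. The same check in degree $q$ shows that the map into $H_q(D)$ is $H_q(f)$. A sign would not matter anyway, since images have the same dimension. The paper already records this identification in \eqref{eq:mapping-cone-long-exact-sequence}, so I can take it as given.
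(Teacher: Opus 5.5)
Your argument is correct and is essentially the paper's proof: the paper cites Lemma~\ref{lem:dimension-identities-long-exact-sequence} for the sequence \eqref{eq:mapping-cone-long-exact-sequence}, while you carry out the same dimension count explicitly and check that the connecting maps of $0\to D\to\Cone(f)\to C[1]\to 0$ are $H_q(f)$ and $H_{q-1}(f)$. Your explicit count is the cleaner formulation, since that lemma is stated for a short exact sequence whose first map plays the role of $f$, which is not literally true for the cone sequence; only the shape of the exact segment matches.
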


\begin{proof}
For a chain map $f\colon C\to D$, we have the long exact sequence~\eqref{eq:mapping-cone-long-exact-sequence}. By Lemma~\ref{lem:dimension-identities-long-exact-sequence}, we obtain the equality.
\end{proof}



The mapping cone construction is functorial. More precisely,  a
morphism $(u,v)\colon f\to g$ in
$\Mor(\Ch(\Vect_k))$ induces the chain map
\[
\Cone (u,v) 
\colon
\Cone (f)\to\Cone (g),
\qquad
\Cone (u,v)_q
:= \begin{pmatrix}
   v_q & 0 \\
   0 &  u_{q-1}
\end{pmatrix}.
\]
It gives rise to the functor
$
\Cone 
\colon
\Mor(\Ch(\Vect_k))
\to
\Ch(\Vect_k)
$. By construction, this functor is exact. In particular, it preserves monomorphisms and epimorphisms.

Let $(u,v)\colon f\to g$ be a morphism in
$\Mor(\Ch(\Vect_k))$, and put
$
c=\Cone (u,v)$.
The commutative square defining $(u,v)$ induces chain maps
$
\widehat{f}\colon\Ker (u)\to\Ker (v)$ and $ \overline{g}\colon\Coker(u)\to
\Coker (v).
$
Then, there exist natural isomorphisms of chain complexes
\begin{equation}\label{eq:cone_cokernel}
\Ker(c)\cong\Cone (\widehat{f}),
\qquad
\Coker(c)\cong\Cone (\overline{g}).    
\end{equation}
The induced maps and the above isomorphisms fit into the following commutative diagram:
\[
\begin{tikzcd}[column sep=large, row sep=large]
\ker(u)
  \arrow[r, hook]
  \arrow[d, "\widehat{f}"']
&
C
  \arrow[r, "u"]
  \arrow[d, "f"']
&
C'
  \arrow[r, two heads]
  \arrow[d, "g"']
&
\Coker(u)
  \arrow[d, "\overline{g}"]
\\
\ker(v)
  \arrow[r, hook]
  \arrow[d]
&
D
  \arrow[r, "v"]
  \arrow[d]
&
D'
  \arrow[r, two heads]
  \arrow[d]
&
\Coker(v)
  \arrow[d]
\\
\Cone (\widehat{f})\cong\ker(c)
  \arrow[r]
&
\Cone (f)
  \arrow[r, "c"]
&
\Cone (g)
  \arrow[r, two heads]
&
\Cone (\overline{g})\cong\Coker(c).
\end{tikzcd}
\]

%% file: 3_algebra_main.tex
\section{Kernel-cokernel stabilization and rank differences}\label{sec:alg_main}


Let $\cI$ be a filtered poset.
The following notions play a central role in this paper.

\begin{definition}\label{def:stab}
We say that an object
$
C\in
\Fun
(\mathcal{I},\Ch(\Vect_k))
$ \emph{stabilizes degreewise} if, for every $q \in \ZZ$,  there exists \(A_q\in\mathcal{I}\)
such that, for every \(A,B\in\mathcal{I}\) satisfying
$
A_q\leq A\leq B,
$
the structure morphism
$
(C_A)_q\to (C_B)_q
$
is an isomorphism.

We say that 
a morphism in
$
\Fun
(\mathcal{I},\Ch(\Vect_k))
$
satisfies \emph{kernel stabilization}
(resp.\ \emph{cokernel stabilization}) if
\(\Ker(u)\) (resp.\ \(\Coker(u)\)) stabilizes degreewise. We say that a morphism satisfies \emph{kernel-cokernel stabilization}
if it satisfies both kernel stabilization and cokernel
stabilization.
\end{definition}

This section aims to prove the following.

\begin{theorem}
\label{cor:main_result}
Let \(\mathcal{I}=(\mathcal{I},\leq)\) be a filtered poset.
Let
$
(u,v) \colon f\to  g
$
in
$
\Mor
(\Fun
(\mathcal{I},\Chnn(\vect_k)))
$. Suppose that both
$
u$ and 
$v
$
satisfy kernel-cokernel stabilization. 
Then, for every \(q\in\NN\), there exist $\Delta(q) \in \ZZ$ and
\(A_q\in\mathcal{I}\) such that
$
\Delta(q) = \dim_k\Image (H_q(g_A))
-
\dim_k\Image (H_q(f_A))$  for all \(A\geq A_q\).
\end{theorem}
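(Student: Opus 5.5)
The plan is to reduce the statement to an \emph{eventual constancy of homological dimension differences} along a single morphism, and then run an induction on $q$ using Lemma~\ref{lem:mapping-cone-rank-identity}.

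\textbf{Step 1 (key lemma).} Let $w\colon X\to Y$ be a morphism in $\Fun(\mathcal{I},\Chnn(\vect_k))$ satisfying kernel-cokernel stabilization. I claim that for every $q$ the integer $\dim_k H_q(Y_A)-\dim_k H_q(X_A)$ is eventually constant in $A$.

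To prove it, factor $w$ through its image $J:=\Image(w)$. This gives short exact sequences
\[
0\to K\to X\to J\to 0 \qquad\text{and}\qquad 0\to J\to Y\to Q\to 0,
\]
with $K=\Ker(w)$ and $Q=\Coker(w)$, all computed objectwise.

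Since $\mathcal{I}$ is filtered, I choose an index $A_0$ lying above the stabilization indices of $K$ and $Q$ in the finitely many degrees $q-2,\dots,q+2$. Then for all $A_0\le A\le B$ the maps $H_j(K_A)\to H_j(K_B)$ and $H_j(Q_A)\to H_j(Q_B)$ are isomorphisms for $j=q-1,q,q+1$. These homology groups are finite-dimensional and independent of $A$ up to these isomorphisms.

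Write $\iota_j^A\colon H_j(K_A)\to H_j(X_A)$ for the map induced by inclusion, and $\delta_j^A\colon H_j(Q_A)\to H_{j-1}(J_A)$ for the connecting map. Naturality gives $\psi\circ\iota_j^A=\iota_j^B\circ\varphi$ with $\varphi$ an isomorphism, so $\rank\iota_j^B\le\rank\iota_j^A$. Likewise $\delta_j^B\circ\varphi=\psi\circ\delta_j^A$, so $\rank\delta_j^B\le\rank\delta_j^A$. A nonincreasing sequence of nonnegative integers indexed by a filtered poset is eventually constant: take $A$ realizing the minimum, and then every $B\ge A$ attains it too. Hence all these ranks are eventually constant.

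Exactness of the two long exact sequences (as in Lemma~\ref{lem:dimension-identities-long-exact-sequence}) now gives
\[
\dim H_q(X_A)-\dim H_q(J_A)=\rank\iota_q^A-\dim H_{q-1}(K_A)+\rank\iota_{q-1}^A,
\]
\[
\dim H_q(Y_A)-\dim H_q(J_A)=\dim H_q(Q_A)-\rank\delta_q^A-\rank\delta_{q+1}^A .
\]
Subtracting the first identity from the second proves the claim.

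\textbf{Step 2 (apply to three morphisms).} I apply Step~1 to $u$, to $v$, and to $c=\Cone(u,v)\colon\Cone(f)\to\Cone(g)$, computed objectwise in $A$.

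For $c$, the isomorphisms~\eqref{eq:cone_cokernel} give $\Ker(c)_q\cong\Ker(v)_q\oplus\Ker(u)_{q-1}$ degreewise, and similarly for $\Coker(c)$. So kernel-cokernel stabilization of $u$ and $v$ implies that of $c$. Note also that $\Cone(f_A)$ lies in $\Chnn(\vect_k)$, so all homology groups involved are finite-dimensional.

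\textbf{Step 3 (induction on $q$).} By Lemma~\ref{lem:mapping-cone-rank-identity} applied to $g_A$ and to $f_A$, the difference $\Delta_A(q):=\dim\Image H_q(g_A)-\dim\Image H_q(f_A)$ satisfies
\[
\Delta_A(q)=\bigl[\dim H_q(D'_A)-\dim H_q(D_A)\bigr]+\bigl[\dim H_{q-1}(C'_A)-\dim H_{q-1}(C_A)\bigr]-\bigl[\dim H_q(\Cone g_A)-\dim H_q(\Cone f_A)\bigr]-\Delta_A(q-1).
\]
Here $\Delta_A(-1)=0$ because the complexes vanish in negative degrees. By Step~2 each bracket is eventually constant. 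Inducting on $q$ and taking common upper bounds of the finitely many indices involved yields $A_q$ and $\Delta(q)$.

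The main obstacle is Step~1. There the homology of $X_A$ and $Y_A$ themselves need not stabilize, so the proof must isolate quantities depending only on the stable objects $K$ and $Q$. The monotonicity of the ranks of $\iota$ and $\delta$ is what makes this work.
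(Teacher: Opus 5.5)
Your proposal is correct and follows the paper's proof essentially step for step: the image factorization with its two short exact sequences (Proposition~\ref{prop:local-kernel-cokernel-stability}), stabilization of the cone map via the degreewise kernel/cokernel decomposition (Proposition~\ref{prop:cone-homology-difference-stability}), and induction on the recurrence from Lemma~\ref{lem:mapping-cone-rank-identity}. The only difference is cosmetic: the paper obtains eventual constancy of the kernels and images of the connecting maps by comparing with the colimit (Lemma~\ref{lem:stabi}), whereas you apply the same finite-dimensional monotonicity argument directly to the ranks of $\iota_j^A$ and $\delta_j^A$, which avoids colimits entirely.
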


We will use this theorem to establish weak stabilization of the associated persistent Betti number functionals.

We first record some consequences of the exactness of filtered colimits.
For $C,D\in\Fun(\cI,\Ch(\Vect_k))$
and a morphism $f\colon C\to D$, we write
$
C_\infty
:=
\colim C,
\
f_\infty
:=
\colim f
\colon
C_\infty\to D_\infty.
$
Since filtered colimits of vector spaces are exact, they commute with
homology.
Hence, for every $q\in \ZZ$, there is a canonical isomorphism
$
\colim H_q(C)
\cong
H_q(C_\infty).
$ For a short exact sequence
$
0\to C\to C'\to Q\to 0
$ in
$\Fun
(\cI,\Ch(\Vect_k))$, we denote by $\delta_{q,A}\colon H_q(Q_A)\to H_{q-1}(C_A)
$ the connecting homomorphism at $A$, for each $A \in \cI$, and by
$
\delta_{q,\infty}\colon H_q(Q_\infty)\to H_{q-1}(C_\infty)
$ the connecting homomorphism associated with the colimit short exact sequence. Notice that we have the following canonical isomorphisms
\[
\colim_{A\in\cI}\Ker(\delta_{q,A})
\cong
\Ker(\delta_{q,\infty}) \text{ and } 
\colim_{A\in\cI}\Image (\delta_{q,A})
\cong
\Image  (\delta_{q,\infty}).\]

In what follows, we consider filtered diagrams of degreewise finite-dimensional chain complexes. Note, however, that the colimit of such a diagram need not be degreewise finite-dimensional in general.

We first prove the following lemmas.
\begin{lemma}\label{lem:stabi}
Let
$
0\to C
\xrightarrow{} C'
\xrightarrow{} Q
\to 0
$
be a short exact sequence in
$\Fun(\mathcal{I},\Ch(\vect_k))$, and let
$q\in\mathbb{Z}$.

\begin{enumerate}
\item
If $C \to C'$ satisfies cokernel stabilization, then there exists $A_{\mathrm{cok}}\in \cI$ such that, for every
$A\geq A_{\mathrm{cok}}$, the canonical morphism induces an isomorphism
$
\Ker(\delta_{q,A})
\cong
\Ker(\delta_{q,\infty})
$.

\item If $C' \to Q$ satisfies kernel stabilization, then there exists $A_{\mathrm{ker}}\in \cI$ such that, for every $A\geq A_{\mathrm{ker}}$, the canonical morphism induces an isomorphism
 $
\Image(\delta_{q,A})
\cong
\Image(\delta_{q,\infty})
$.
\end{enumerate}
\end{lemma}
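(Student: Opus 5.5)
The plan is to identify, in each case, a fixed finite-dimensional homology space in which the relevant subspaces form an increasing family, and then let finite-dimensionality force that family to become constant. First note that the cokernel of \(C\to C'\) is \(Q\) and the kernel of \(C'\to Q\) is \(C\). So cokernel stabilization in (1) means that \(Q\) stabilizes degreewise, and kernel stabilization in (2) means that \(C\) stabilizes degreewise (Definition~\ref{def:stab}). For \(B\leq B'\) write \(t_{BB'}\) for the structure maps and \(\varphi_B\colon H_q(Q_B)\to H_q(Q_\infty)\), \(\psi_B\colon H_{q-1}(C_B)\to H_{q-1}(C_\infty)\) for the canonical maps. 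Naturality of the connecting homomorphism gives \(\delta_{q,B'}\circ t_{BB'}=t_{BB'}\circ\delta_{q,B}\) and \(\delta_{q,\infty}\circ\varphi_B=\psi_B\circ\delta_{q,B}\).

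For (1), since \(\cI\) is filtered, choose \(A_0\) above the stabilization indices of \(Q\) in degrees \(q+1,q,q-1\). For \(A_0\leq B\leq B'\), the map \(Q_B\to Q_{B'}\) is an isomorphism in these degrees, so \(t_{BB'}\colon H_q(Q_B)\to H_q(Q_{B'})\) is an isomorphism. This space is finite-dimensional because \(Q_B\) is degreewise finite-dimensional. The subdiagram \(\{B\geq A_0\}\) is cofinal, so \(\varphi_B\) is an isomorphism for every \(B\geq A_0\). By naturality, \(t_{BB'}\) carries \(\Ker(\delta_{q,B})\) into \(\Ker(\delta_{q,B'})\). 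Transporting everything to \(V:=H_q(Q_{A_0})\) via these isomorphisms, the kernels become a family of subspaces of \(V\) that increases along \(\leq\). Pick \(A_{\mathrm{cok}}\geq A_0\) at which \(\dim_k\) of the transported kernel is maximal; this is possible since \(\dim_k V<\infty\). For \(B\geq A_{\mathrm{cok}}\) the inclusion then forces equality, so \(t\colon\Ker(\delta_{q,A})\to\Ker(\delta_{q,B})\) is an isomorphism for \(B\geq A\geq A_{\mathrm{cok}}\). The colimit of an eventually constant diagram over a cofinal subposet is that constant value. Combining this with the canonical isomorphism \(\colim_B\Ker(\delta_{q,B})\cong\Ker(\delta_{q,\infty})\) stated above, the canonical morphism \(\Ker(\delta_{q,A})\to\Ker(\delta_{q,\infty})\) is an isomorphism for all \(A\geq A_{\mathrm{cok}}\).

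For (2) the argument is the same with \(C\) in place of \(Q\). Choose \(A_0\) above the stabilization indices of \(C\) in degrees \(q,q-1,q-2\). Then \(H_{q-1}(C_B)\) is a fixed finite-dimensional space up to the isomorphisms \(t_{BB'}\) for \(B'\geq B\geq A_0\). Naturality gives \(t_{BB'}(\Image\delta_{q,B})\subseteq\Image\delta_{q,B'}\), so the images form an increasing family of subspaces. Maximizing dimension yields \(A_{\mathrm{ker}}\) beyond which the family is constant, and \(\colim_B\Image(\delta_{q,B})\cong\Image(\delta_{q,\infty})\) gives the claim.

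The only delicate point is that the subspaces are merely increasing, not obviously constant. Knowing only that \(\varphi_A\) restricted to the kernel is injective does not give surjectivity onto \(\Ker(\delta_{q,\infty})\). The extra input is the finite-dimensionality of the ambient homology, which is where degreewise finite-dimensionality of the diagram is used; the colimit itself may be infinite-dimensional, so the argument must take place at a finite stage.
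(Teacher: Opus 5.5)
Your proof is correct and follows essentially the same route as the paper. Once $H_q(Q_A)$ (resp.\ $H_{q-1}(C_A)$) has stabilized to a finite-dimensional space, the subspaces $\Ker(\delta_{q,A})$ (resp.\ $\Image(\delta_{q,A})$) map injectively and compatibly into their colimit $\Ker(\delta_{q,\infty})$ (resp.\ $\Image(\delta_{q,\infty})$), and finite-dimensionality forces these maps to become isomorphisms from some index on. You phrase this as an increasing family of subspaces of a fixed space, and you are more explicit about which chain degrees must stabilize (and you spell out part (2), which the paper leaves as ``similar''), but the key idea is identical.
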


\begin{proof}
We first prove (1). 
Since $C \to C'$ satisfies cokernel stabilization, there exists $A_q \in \cI$ such that 
$
H_q(Q_A)\to H_q(Q_\infty)
$
is an isomorphism for every $A\geq A_q$. This induces the following commutative diagram for any $A,B$ with $A_q \leq A \leq B$:
\[
\begin{tikzcd}[]
\Ker(\delta_{q,A})
  \arrow[r, hook]
  \arrow[d]
&
H_q(Q_A)
  \arrow[r, "\delta_{q,A}"]
  \arrow[d, "\cong"']
&
H_{q-1}(C_A)
  \arrow[d]
\\
\Ker(\delta_{q,B})
  \arrow[r, hook]
  \arrow[d]
&
H_q(Q_B)
  \arrow[r, "\delta_{q,B}"]
  \arrow[d, "\cong"']
&
H_{q-1}(C_B)
  \arrow[d]
\\
\Ker(\delta_{q,\infty})
  \arrow[r, hook]
&
H_q(Q_\infty)
  \arrow[r, "\delta_{q,\infty}"]
&
H_{q-1}(C_\infty).
\end{tikzcd}
\]
The induced maps between the kernels are injective.  In addition, we have
\[
\dim_k\Ker(\delta_{q,\infty})
\leq
\dim_k H_q(Q_\infty)
=
\dim_k H_q(Q_{A_q})
<
\infty,
\]
by assumption. This implies that there exists $A_{\mathrm{cok}}\geq A_q$ such that
$
\Ker(\delta_{q,A})
\to
\Ker(\delta_{q,\infty})
$
is an isomorphism for every $A\geq A_{\mathrm{cok}}$. 

The proof of (2) is similar to that of (1).
\end{proof}


The preceding lemma yields the following stabilization result.

\begin{lemma}\label{lem:stability}
\begin{enumerate}
\item In the setting of Lemma~\ref{lem:stabi}(1), there exist $\Delta(q) \in \ZZ$ and \(A_q \in\mathcal{I}\) such that
$\Delta(q) = \dim_k H_q(C'_A)-\dim_k H_q(C_A)
$  for all \(A\geq A_q\).
\item In the setting of Lemma~\ref{lem:stabi}(2),  there exist $\Delta(q) \in \ZZ$ and \(A_q\in\mathcal{I}\) such that 
$\Delta(q) = \dim_k H_q(C'_A)-\dim_k H_q(Q_A)
$  for all \(A\geq A_q\).
\end{enumerate}
\end{lemma}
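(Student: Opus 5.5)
We prove Lemma~\ref{lem:stability} by writing each dimension difference as a combination of dimensions of kernels and images of connecting homomorphisms, and then applying Lemma~\ref{lem:stabi} to show that these dimensions are eventually constant in $A$.

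For (1), we use the second identity of Lemma~\ref{lem:dimension-identities-long-exact-sequence}, applied to $0\to C_A\to C'_A\to Q_A\to 0$:
\[
\dim_k H_q(C'_A)-\dim_k H_q(C_A)=\dim_k\Ker\delta_{q,A}+\dim_k\Ker\delta_{q+1,A}-\dim_k H_{q+1}(Q_A).
\]
Each $C_A$, $C'_A$, $Q_A$ is degreewise finite-dimensional, so all homology groups at stage $A$ are finite-dimensional and the identity applies.

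Cokernel stabilization of $C\to C'$ says that $Q$ stabilizes degreewise. Hence, for $A$ beyond some $A'$, the complex $Q_A$ in degrees $q,q+1,q+2$ is isomorphic to $Q_\infty$ in those degrees via the structure maps. Therefore $H_{q+1}(Q_A)\to H_{q+1}(Q_\infty)$ is an isomorphism, and $\dim_k H_{q+1}(Q_A)$ is constant and finite.

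Lemma~\ref{lem:stabi}(1), applied at degrees $q$ and $q+1$, gives indices beyond which $\Ker\delta_{q,A}\cong\Ker\delta_{q,\infty}$ and $\Ker\delta_{q+1,A}\cong\Ker\delta_{q+1,\infty}$. These kernels are finite-dimensional, being subspaces of $H_q(Q_\infty)$ and $H_{q+1}(Q_\infty)$.

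Since $\cI$ is filtered, we choose $A_q$ to be a common upper bound of these finitely many indices. For $A\ge A_q$ the right-hand side is the constant
\[
\Delta(q):=\dim\Ker\delta_{q,\infty}+\dim\Ker\delta_{q+1,\infty}-\dim H_{q+1}(Q_\infty),
\]
which proves (1).

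For (2), we use the exact sequence
\[
H_{q+1}(Q)\xrightarrow{\delta_{q+1}}H_q(C)\to H_q(C')\to H_q(Q)\xrightarrow{\delta_q}H_{q-1}(C).
\]
Exactness at $H_q(C')$ and $H_q(Q)$ gives
\[
\dim H_q(C'_A)-\dim H_q(Q_A)=\dim\Image(H_q(C_A)\to H_q(C'_A))-\dim\Image\delta_{q,A}.
\]
Exactness at $H_q(C_A)$ gives
\[
\dim\Image(H_q(C_A)\to H_q(C'_A))=\dim H_q(C_A)-\dim\Image\delta_{q+1,A}.
\]
Combining,
\[
\dim H_q(C'_A)-\dim H_q(Q_A)=\dim H_q(C_A)-\dim\Image\delta_{q+1,A}-\dim\Image\delta_{q,A}.
\]

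Kernel stabilization of $C'\to Q$ means $C$ stabilizes degreewise. Hence $\dim H_q(C_A)$ is eventually constant, equal to $\dim H_q(C_\infty)<\infty$. Lemma~\ref{lem:stabi}(2), at degrees $q$ and $q+1$, makes both image dimensions eventually constant and finite. Taking a common upper bound of the finitely many indices again yields $A_q$ and $\Delta(q)$.

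The only delicate point is bookkeeping. We must check that every term in the identities is finite-dimensional at stage $A$, which follows from degreewise finiteness. We must also check that the colimit-level quantities are finite, since $C_\infty$ need not be degreewise finite-dimensional in general. This is guaranteed because we only use colimit terms ($H_*(Q_\infty)$ in (1), $H_*(C_\infty)$ in (2), and the kernels and images from Lemma~\ref{lem:stabi}) that are isomorphic to stage-$A$ terms beyond the stabilization index.
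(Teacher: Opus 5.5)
Your proposal is correct and follows the paper's proof essentially step by step. In (1) you use the same identity from Lemma~\ref{lem:dimension-identities-long-exact-sequence}, and in (2) the same identity involving the images of the connecting maps. You handle $\Ker\delta_{q,A},\Ker\delta_{q+1,A}$ (resp.\ $\Image\delta_{q,A},\Image\delta_{q+1,A}$) with Lemma~\ref{lem:stabi}, the remaining homology term with degreewise stabilization of $Q$ (resp.\ of $C=\Ker(C'\to Q)$), and finish with a common upper bound in the filtered poset. Your explicit derivation of the identity in (2) and your finiteness bookkeeping simply spell out what the paper leaves to ``a similar argument.''
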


\begin{proof}
We first prove~(1). For every \(A\in\mathcal{I}\),
Lemma~\ref{lem:dimension-identities-long-exact-sequence} gives
\[
\dim_k H_q(C'_A)-\dim_k H_q(C_A)
=
\dim_k\Ker(\delta_{q,A})
+\dim_k\Ker(\delta_{q+1,A})
-\dim_k H_{q+1}(Q_A).
\]
By Lemma~\ref{lem:stabi}(1),
the first two terms on the
right-hand side are constant for all sufficiently large \(A\).
 Moreover, cokernel stabilization implies that the map
$
  H_{q+1}(Q_A)\longrightarrow H_{q+1}(Q_\infty)
$
is an isomorphism for all sufficiently large \(A\). Hence
\(\dim_k H_{q+1}(Q_A)\) is also eventually constant.
Since \(\mathcal{I}\) is filtered, the finitely many stabilization
indices involved admit a common upper bound. Thus there exists
\(A_q \in\mathcal{I}\) such that
$
\dim_k H_q(C'_A)-\dim_k H_q(C_A)
$
is constant for \(A\geq A_q\).

We next prove~(2). For every \(A\in\mathcal{I}\),
Lemma~\ref{lem:dimension-identities-long-exact-sequence} gives
\[
\dim_k H_q(C'_A)-\dim_k H_q(Q_A)
=
\dim_k H_q(C_A)
-\dim_k\Image(\delta_{q,A})
-\dim_k\Image(\delta_{q+1,A}).
\]
By Lemma~\ref{lem:stabi}(2),
the last two terms on the right-hand side are eventually constant. Then, the desired assertion follows by an argument similar to that in~(1).
\end{proof}

The following proposition may be viewed as a common generalization of
the two statements in Lemma~\ref{lem:stability}. The morphism is no longer
required to be injective as in part~(1) or surjective as in part~(2).
Instead, it is assumed to satisfy kernel-cokernel stabilization.

\begin{proposition}
\label{prop:local-kernel-cokernel-stability}
Let
$
u\colon C\to D
$
be a morphism in
\(\Fun(\mathcal{I},\Ch(\vect_k))\).
Suppose that \(u\) satisfies kernel-cokernel stabilization.
Then,  for every \(q\in\mathbb{Z}\), there exist  $\Delta(q) \in \ZZ$ and \(A_q\in\mathcal{I}\) such that
$\Delta(q) = \dim_k H_q(D_A)-\dim_k H_q(C_A)$ for all \(A\geq A_q\).
\end{proposition}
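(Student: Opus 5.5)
Factor $u$ through its image: set $K:=\Ker(u)$, $I:=\Image(u)$ and $Q:=\Coker(u)$, all computed objectwise in $\Fun(\mathcal{I},\Ch(\vect_k))$. This gives two short exact sequences in $\Fun(\mathcal{I},\Ch(\vect_k))$:
\[
0\to K\to C\xrightarrow{p} I\to 0,
\qquad
0\to I\xrightarrow{j} D\to Q\to 0 .
\]
We then write
\[
\dim_k H_q(D_A)-\dim_k H_q(C_A)
=\bigl(\dim_k H_q(D_A)-\dim_k H_q(I_A)\bigr)
+\bigl(\dim_k H_q(I_A)-\dim_k H_q(C_A)\bigr),
\]
and show that each bracket is eventually constant. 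All complexes involved are degreewise finite-dimensional, so all homology groups at each $A$ are finite-dimensional, and Lemma~\ref{lem:dimension-identities-long-exact-sequence} applies.

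For the first bracket, apply Lemma~\ref{lem:stability}(1) to the second sequence. This requires that $j\colon I\to D$ satisfy cokernel stabilization. Its cokernel is $\Coker(j)=Q=\Coker(u)$, which stabilizes degreewise by hypothesis. So there exist $\Delta_1(q)$ and $A^{(1)}_q$ with $\dim_k H_q(D_A)-\dim_k H_q(I_A)=\Delta_1(q)$ for all $A\ge A^{(1)}_q$.

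For the second bracket, apply Lemma~\ref{lem:stability}(2) to the first sequence, with $C'=C$ and $Q$ replaced by $I$. This requires that $p\colon C\to I$ satisfy kernel stabilization. Its kernel is $\Ker(p)=K=\Ker(u)$, which stabilizes degreewise by hypothesis. We obtain $\Delta_2(q)$ and $A^{(2)}_q$ with $\dim_k H_q(C_A)-\dim_k H_q(I_A)=\Delta_2(q)$ for all $A\ge A^{(2)}_q$.

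Since $\mathcal{I}$ is filtered, choose $A_q\ge A^{(1)}_q,A^{(2)}_q$ and set $\Delta(q):=\Delta_1(q)-\Delta_2(q)$. Then $\dim_k H_q(D_A)-\dim_k H_q(C_A)=\Delta(q)$ for all $A\ge A_q$.

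The main point to check is that the hypotheses of Lemma~\ref{lem:stabi} are correctly matched in each application, namely that the relevant kernel and cokernel of the factor maps really are $\Ker(u)$ and $\Coker(u)$. This holds because $p$ is epi and $j$ is mono, and it is compatible with the colimit used in Lemma~\ref{lem:stabi}, since filtered colimits are exact. A second check is that the intermediate complex $I$ is degreewise finite-dimensional, as it must be for the lemmas to apply. This is immediate, since $I_A$ is a subcomplex of $D_A$.
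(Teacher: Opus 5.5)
Your proposal is correct and matches the paper's proof. The paper likewise factors $u$ as $C\twoheadrightarrow\Image(u)\hookrightarrow D$, applies Lemma~\ref{lem:stability}(1) to $0\to\Image(u)\to D\to\Coker(u)\to 0$ and Lemma~\ref{lem:stability}(2) to $0\to\Ker(u)\to C\to\Image(u)\to 0$, and takes a common upper bound of the two stabilization indices; your checks that the factor maps have kernel $\Ker(u)$ and cokernel $\Coker(u)$, and that $\Image(u)$ is degreewise finite-dimensional, just spell out what the paper states in one line.
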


\begin{proof}
Consider the image factorization of \(u\):
$
C
\overset{e}{\twoheadrightarrow}
\Image (u)
\overset{m}{\lhook\joinrel\to}
D.
$
Since \(u\) satisfies cokernel (resp. kernel) stabilization, \(m\) (resp., $e$) satisfies cokernel (resp. kernel) stabilization.
Applying Lemma~\ref{lem:stability}\textup{(1)} to the short exact sequence
\[
0
\to
\Image (u)
\xrightarrow{\,m\,}
D
\to
\Coker(m)
\to
0,
\]
we obtain \(A_m\in\mathcal{I}\) such that  $\dim_k H_q(D_A)
-
\dim_k H_q(\Image (u_A))$ is constant for $A \geq A_m$.
Similarly, applying Lemma~\ref{lem:stability}\textup{(2)} to the short exact sequence
\[
0
\to
\Ker(e)
\to
C
\xrightarrow{\,e\,}
\Image (u)
\to
0,
\]
we obtain \(A_e\in\mathcal{I}\) such that  $ \dim_k H_q(\Image (u_A))
-
\dim_k H_q(C_A)$
is constant for \(A\geq A_e\).
Since \(\mathcal{I}\) is filtered, there exists \(A_q\in\mathcal{I}\) such that
$
A_e\leq A_q
$ and $
A_m\leq A_q.
$ Therefore, for any $A \geq A_q$, 
\[
\begin{aligned}
\dim_k H_q(D_A)-\dim_k H_q(C_A)
={}&
(
\dim_k H_q(D_A)
-
\dim_k H_q(\Image (u_A))
)
\\
&+
(
\dim_k H_q(\Image (u_A))
-
\dim_k H_q(C_A)
)
\end{aligned}
\]
is constant.
\end{proof}

We next study the induced morphism between the mapping cones of a commutative square.

\begin{proposition}
\label{prop:cone-homology-difference-stability}
Let $(u,v)\colon f\to g
$ be a morphism 
in
$
\Mor
(
\Fun
(\mathcal{I},\Ch(\vect_k))
)
$. 
\begin{enumerate}
    \item 
If \(u\) and \(v\) satisfy cokernel  stabilization, then so does \(\Cone (u,v)\).
\item 
If \(u\) and \(v\) satisfy kernel  stabilization, then so does \(\Cone (u,v)\).
\item If \(u\) and \(v\) satisfy kernel-cokernel  stabilization, then for any $q \in \ZZ$, there exist  $\Delta(q) \in \ZZ$ and \(A_q\in\mathcal{I}\) such that 
$\Delta(q) = \dim_k H_q(\Cone(g_A))
-
\dim_k H_q(\Cone(f_A))$ for all \(A\geq A_q\). 
\end{enumerate}
\end{proposition}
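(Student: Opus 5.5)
The plan is to use the natural isomorphisms \eqref{eq:cone_cokernel}, $\Ker(c)\cong\Cone(\widehat{f})$ and $\Coker(c)\cong\Cone(\overline{g})$ for $c=\Cone(u,v)$. Since kernels and cokernels in $\Fun(\mathcal{I},\Ch(\vect_k))$ are computed objectwise, these isomorphisms hold as $\mathcal{I}$-indexed diagrams, because they are natural in the square $(u,v)$. Parts~(1) and~(2) then become statements about the degreewise stabilization of a mapping cone of a morphism between diagrams.

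For~(1), write $\overline{g}\colon\Coker(u)\to\Coker(v)$. In degree $q$ we have
\[
\Cone(\overline{g})_q=\Coker(v)_q\oplus\Coker(u)_{q-1},
\]
and the structure morphism from $A$ to $B$ is the direct sum of the structure morphisms of $\Coker(v)_q$ and $\Coker(u)_{q-1}$; this is exactly how $\Cone$ acts on morphisms. By cokernel stabilization of $v$ in degree $q$ and of $u$ in degree $q-1$, there are indices $A_q^v$ and $A_{q-1}^u$ beyond which these maps are isomorphisms. Since $\mathcal{I}$ is filtered, we can take a common upper bound, and the direct sum of isomorphisms is an isomorphism. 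Hence $\Coker(c)\cong\Cone(\overline{g})$ stabilizes degreewise. Part~(2) is identical, using $\Ker(c)\cong\Cone(\widehat{f})$ together with kernel stabilization of $v$ in degree $q$ and of $u$ in degree $q-1$.

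For~(3), parts~(1) and~(2) show that $c=\Cone(u,v)\colon\Cone(f)\to\Cone(g)$ is a morphism in $\Fun(\mathcal{I},\Ch(\vect_k))$ that satisfies kernel-cokernel stabilization. The cones remain degreewise finite-dimensional, being finite direct sums of finite-dimensional spaces. Proposition~\ref{prop:local-kernel-cokernel-stability} applied to $c$ then directly gives $\Delta(q)$ and $A_q$ with $\dim_k H_q(\Cone(g_A))-\dim_k H_q(\Cone(f_A))=\Delta(q)$ for all $A\ge A_q$.

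The only delicate point is that the isomorphisms \eqref{eq:cone_cokernel} must hold as diagrams, compatibly with the structure maps, and not merely objectwise. This follows from their naturality, since the kernel and cokernel are taken objectwise and the cone functor is applied objectwise. With that in place, the rest is bookkeeping.
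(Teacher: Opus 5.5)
Your proposal is correct and follows essentially the same argument as the paper. The paper likewise identifies $\Ker(c)\cong\Cone(\widehat{f})$ and $\Coker(c)\cong\Cone(\overline{g})$, and obtains stabilization in degree $q$ from a common upper bound of the stabilization indices in degrees $q$ and $q-1$. For part~(3) it then applies Proposition~\ref{prop:local-kernel-cokernel-stability} to $c=\Cone(u,v)$, exactly as you do.
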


\begin{proof}
We first prove (1) and (2). 
Let \(c:=\Cone (u,v)\). Let
$
\overline{g}\colon
\Coker(u)\to\Coker(v),\ \widehat{f}\colon
\Ker(u)\to\Ker(v)
$
be the morphisms induced by the universal property of cokernels and kernels, respectively.  By the
basic properties of mapping cones, there are natural isomorphisms
\begin{equation}\label{eq:cone-kernel-cokernel}
\Coker(c)
\cong
\Cone (\overline{g}),
\qquad
\Ker(c)
\cong
\Cone (\widehat{f});
\end{equation}
see Equation~\ref{eq:cone_cokernel}. Consequently, \(c\) satisfies cokernel (resp. kernel) stabilization if and only if \(\Cone (\overline{g})\) (resp. \(\Cone (\widehat{f})\)) stabilizes degreewise.

Suppose that
$
u\colon C\to C'$ and $
v\colon D\to D'
$
satisfy cokernel stabilization. 
For every $q \in \ZZ$, since \(\mathcal{I}\) is filtered, there
exists \(A_q\in\mathcal{I}\) such that, for every
\(A_q\leq A\leq B\), the following diagram is commutative:
\[
\begin{tikzcd}[column sep=large, row sep=large]
\Coker(u_{A,q})
  \arrow[r, "\sim"]
  \arrow[d, "\overline{g}_{A,q}"']
&
\Coker(u_{B,q})
  \arrow[d, "\overline{g}_{B,q}"]
\\
\Coker(v_{A,q})
  \arrow[r, "\sim"']
&
\Coker(v_{B,q}),
\end{tikzcd}
\]
where the horizontal maps are isomorphisms. 
Thus, by taking an upper bound $A_{\text{cone}, q}$ of $A_q$ and $A_{q-1}$, the induced morphism
$
\Cone (\overline{g}_{A})_q
\to
\Cone (\overline{g}_{B})_q
$
is an isomorphism for all $A_{\text{cone}, q}\leq A\leq B$. Thus,
\(\Cone (\overline{g})\) stabilizes degreewise.
This completes the proof of  (1). 
The statement (2) follows by the same argument as (1).

Finally, we prove (3).
Assume that  \(u\) and \(v\) satisfy kernel-cokernel stabilization. Then, by (1) and (2),  the morphism \(c\) satisfies kernel-cokernel stabilization. 
Thus, Proposition~\ref{prop:local-kernel-cokernel-stability}  yields the desired result.
\end{proof}

We now prove the main result of this section.

\begin{proof}[Proof of Theorem~\ref{cor:main_result}]
Let $q \in \NN$ and 
$
(u,v) \colon f\to  g
$
in
$
\Mor
(\Fun 
(\mathcal{I},\Chnn(\vect_k)))
$. 
For each \(m\in\{0,..., q\}\) and \(A\in\mathcal{I}\), set
$
\Delta_m (A)
:=
\dim_k\Image (H_m(g_A))
-
\dim_k\Image (H_m(f_A))
$.
By Lemma~\ref{lem:mapping-cone-rank-identity}, we have
\begin{align*}
\Delta_m(A)
={}&
(
\dim_k H_m(D'_A)-\dim_k H_m(D_A)
) \\
&+
(
\dim_k H_{m-1}(C'_A)-\dim_k H_{m-1}(C_A)
) \\
&-
(
\dim_k H_m(\Cone (g_A))
-
\dim_k H_m(\Cone (f_A))
) \\
&-\Delta_{m-1}(A).
\end{align*}
Using this recurrence, we prove by induction on \(m\) that
\(\Delta_m(A)\) is eventually constant in \(A\) for every
\(m=0,\ldots,q\).

For \(m=0\), since \(C\) and \(C'\) vanish in negative degrees, we have
$
H_{-1}(C_A)=H_{-1}(C'_A)=0.
$
Hence
\begin{align*}
\Delta_0(A)
={}&
(
\dim_k H_0(D'_A)-\dim_k H_0(D_A)
) \\
&-
(
\dim_k H_0(\Cone (g_A))
-
\dim_k H_0(\Cone (f_A))
).
\end{align*}
By Propositions~\ref{prop:local-kernel-cokernel-stability} and \ref{prop:cone-homology-difference-stability}, respectively, both differences are eventually constant. Hence, \(\Delta_0(A)\) is eventually constant.


Now let \(0\leq m<q\), and suppose that \(\Delta_m(A)\) is constant for
all sufficiently large \(A\). 
By Proposition~\ref{prop:local-kernel-cokernel-stability}, the differences
$
\dim_k H_{m+1}(D'_A)-\dim_k H_{m+1}(D_A)
$ and 
$
\dim_k H_m(C'_A)-\dim_k H_m(C_A)
$
are constant for all sufficiently large \(A\). Moreover, by
Proposition~\ref{prop:cone-homology-difference-stability},  
$
\dim_k H_{m+1}(\Cone (g_A))
-
\dim_k H_{m+1}(\Cone (f_A))
$
is constant for all sufficiently large \(A\). Since \(\mathcal{I}\) is filtered, there is a common index beyond which
these three differences and \(\Delta_m(A)\) are all constant. Applying
the recurrence in degree \(m+1\), we conclude that \(\Delta_{m+1}(A)\)
is constant for all sufficiently large \(A\).
This completes the proof.
\end{proof}

%% file: 3.5_addone.tex
\section{Criteria for stabilization and bounds of persistent Betti numbers}\label{sec:add}

In this section, we introduce \emph{add one maps} and define admissibility in terms of translation invariance,  \emph{add one kernel-cokernel stabilization}, and the \emph{add one kernel-cokernel boundedness condition}. We then show that the latter two conditions yield stabilization and bounds for the add one cost (Equation~\eqref{eq:add-one-cost}) of persistent Betti numbers.



\subsection{Add one maps and admissibility} \label{sec:add one-stabilization}
Let $d \in \NN_{\geq1}$. For $P\subseteq \mathbb{R}^d$, let $\mathcal{F}(P)$ be the poset of finite subsets of $P$, ordered by inclusion. It is filtered, and we regard it as a category in the usual way.
If $P\subseteq P'$, there is a canonical inclusion functor
$
\mathcal{F}(P)\hookrightarrow \mathcal{F}(P').$

We consider a functor 
\[ \mathcal{A}\colon \cU \to \Fun ( \mathbb{R}, \Chnn (\vect_k) ) 
.\] 
For \(A\subseteq\mathbb{R}^d\) and \(x\in\mathbb{R}^d\), set
$A+x:=\{a+x\mid a\in A\}$. We say that $\cA$ is \emph{translation invariant} if $\cA (F + x) \cong \cA (F)$ for every $F \in \cU$ and $x \in \RR^d$.

For \(F\in\cU\), we set \(F^+:=F\cup\{0\}\), where \(0\) denotes the
origin of \(\RR^d\). Thus, if $0 \in F$, then $F^+ =F$. Let
$ \operatorname{id}_{\cU}\colon \cU\to\cU, \ F\mapsto F, 
$
and 
$\operatorname{id}_{\cU}^+\colon \cU\to\cU, \ F\mapsto F^+:=F\cup\{0\}$. 
These functors induce a natural transformation \[ u^+\colon \mathcal{A} \to \mathcal{A}^+, \qquad \mathcal{A}^+ := \mathcal{A}\circ\operatorname{id}_{\cU}^+. \] For \(F\in\cU\) and \(t\in\mathbb{R}\), we denote its components by 
$u_{F}^+\colon \mathcal{A}(F) \to \mathcal{A}(F^+)
$
and 
$\add_{F,t}\colon \mathcal{A}(F)(t) \to \mathcal{A}(F^+)(t)$, respectively. 
 We call natural transformations of this form, and each of their
  components, \emph{add one maps}.
We then define the add one kernel-cokernel boundedness condition for \(\mathcal{A}\).

\begin{definition} \label{def:boundedness} We say that \(\mathcal{A}\) satisfies the \emph{add one kernel-cokernel boundedness condition} if there exist functions 
$\rho\colon\mathbb{R}\to[0,\infty)$, $
\tau\colon\mathbb{Z} \times \RR \to\NN
$, $c\colon\mathbb{Z}\times\mathbb{R}\to(0,\infty)$ such that, for every \(F\in\cU\), \(j\in\mathbb{Z}\), and \(t\in\mathbb{R}\), 
\[
\dim_k\Ker((\add_{F,t})_j) + \dim_k\Coker((\add_{F,t})_j) \leq c(j,t) \left( 1+ \#(F\cap B_0(\rho(t))) \right)^{\tau(j,t)}. \] \end{definition}

We next fix a subset \(P\subseteq\mathbb{R}^d\), not necessarily finite.
For the functors $\iota_P\colon \mathcal{F}(P)  \to \cU, \ F\mapsto F, $ and $i_P^+\colon \mathcal{F}(P)  \to \cU, \ F\mapsto F^+$, we have the functors
$ \cA_P := \mathcal{A}\circ\iota_P, \ \cA_P^+ := \mathcal{A}\circ i_P^+$ in $\Fun \left( \mathcal{F}(P) , \Fun ( \mathbb{R}, \Chnn (\Vect_k) ) \right)$.  
The inclusions \(F\subseteq F^+\) induce an add one map 
\[ \add_P:= u^+|_{\mathcal{F}(P) } \colon \mathcal{A}_P \to \mathcal{A}_P^+. \] 
For $F\in\mathcal{F}(P) $ and $t \in \mathbb{R}$, we denote its components by
  $ \add_{P,F}\colon \cA_P(F) = \mathcal{A}(F) \to \cA_P^+(F) = \mathcal{A}(F^+).
  $ 
  Via the natural equivalence
  \[ \Fun \left( \mathcal{F}(P) , \Fun ( \mathbb{R}, \Chnn (\vect_k) ) \right) \cong \Fun \left( \mathbb{R}, \Fun ( \mathcal{F}(P) , \Chnn (\vect_k) ) \right), \] we regard \(\cA_P\) and \(\cA_P^+\) as \(\mathbb{R}\)-indexed families of functors on \(\mathcal{F}(P) \). 
  More explicitly, for each \(t\in\mathbb{R}\), we have 
  $
  \mathcal{A}_{P,t}\colon \mathcal{F}(P)  \to \Chnn (\vect_k), \ F\mapsto\mathcal{A}(F)(t)$  and   
  $\mathcal{A}_{P,t}^+\colon \mathcal{F}(P)  \to \Chnn (\vect_k), \ F\mapsto\mathcal{A}(F^+)(t)$. 
Thus,  \(u_P^+\) induces an add one map 
\[
u_{P,t}^+\colon \cA_{P,t}\to\cA_{P,t}^+.
\]

We define the following conditions, which will be used to give a sufficient condition for persistent Betti number functionals to be weakly stabilizing.
\begin{definition}
\label{def:add one-kernel-cokernel-stabilization}
We say that \(\mathcal{A}\) satisfies \emph{add one kernel (resp. cokernel) stabilization at \(t \in \RR\)} if, for every locally finite subset  \(P\subseteq\mathbb{R}^d\), the add one map
$
\add_{P,t}\colon
\mathcal{A}_{P,t}
\to
\mathcal{A}_{P,t}^{+}
$
satisfies kernel (resp. cokernel) stabilization.
If \(\mathcal{A}\) satisfies both add one kernel stabilization and add one cokernel stabilization at \(t\), then we say that \(\mathcal{A}\) satisfies
\emph{add one kernel-cokernel stabilization at \(t\)}.

If any one of these stabilization conditions holds for every \(t\in\mathbb{R}\), we say that \(\mathcal{A}\) satisfies the corresponding pointwise stabilization condition.
\end{definition}

We now define admissibility.
\begin{definition}\label{def:admi}
For $t \in \RR,$ we say that \(\mathcal{A}\) is \emph{admissible at \(t\)} if it satisfies the following conditions:
\begin{enumerate}
\item
\(\mathcal{A}\) is translation invariant.
\item
\(\mathcal{A}\) satisfies add one kernel-cokernel stabilization at
\(t\).
\item
\(\mathcal{A}\) satisfies add one kernel-cokernel boundedness.
\end{enumerate}
If $\cA$ is admissible for every $t$, we say that $\cA$ is pointwise admissible.
\end{definition}

All functors considered in Section~\ref{sec:example} are pointwise admissible.


\begin{remark}
\label{rem:origin-already-present}
In both Definitions~\ref{def:boundedness}
and~\ref{def:add one-kernel-cokernel-stabilization}, it suffices to
consider point configurations that do not contain the origin.
Indeed, for the boundedness condition in
Definition~\ref{def:boundedness}, if \(0\in F\) for \(F\in\cU\), then
\(F^+=F\) and hence \(\add_{F,t}=\id_{\cA(F)(t)}\). Thus, both its kernel and
cokernel vanish. It therefore suffices to verify the boundedness
condition for \(F\in\cU\) such that \(0\notin F\).
For the stabilization condition in Definition~\ref{def:add one-kernel-cokernel-stabilization}, suppose that \(0\in P\). Taking \(F_0=\{0\}\), we have \(F^+=F\), and hence
\(\add_{P,t,F}=\id_{\cA(F)(t)}\), for every \(F\in\mathcal{F}(P) \) satisfying
\(F_0\subseteq F\). The required stabilization conditions therefore hold
automatically. Consequently, it suffices to verify the definition for any \(P\subseteq\mathbb{R}^d\) with \(0\notin P\).
\end{remark}

\subsection{Add one stabilization and bounds for persistent Betti numbers}
\label{subsec:deterministic-add one}
In this subsection, we show that add one kernel-cokernel stabilization
and boundedness imply, respectively, stabilization and boundedness for the add one costs of the associated persistent Betti number functionals. These results will be used in Subsection~\ref{subsec:CLT} to verify
the probabilistic assumptions required for central limit theorems.

To formulate these results, we first recall the add one cost. For a functional \(H\colon\cU\to\mathbb{R}\), its \emph{add one cost function} for $x \in \RR^d$ is
the map
\begin{equation}\label{eq:add-one-cost}
    D_xH\colon\cU  \rightarrow\mathbb{R},
\ F \mapsto
H(F \cup\{x\})-H(F).
\end{equation}
This measures the change in \(H\) caused by inserting a point at $x$. The functionals we have in mind are the persistent Betti number functionals \(\beta_q^{r,s}(\cA)\) defined in
Equation~\eqref{eq:PBN}.


\subsubsection{Add one stabilization for persistent Betti numbers}
\label{subsubsec:det-add-stab}
 We first prove the following. 
\begin{proposition}\label{prop:deterministic-weakly-stab}
Let
$
\cA \colon
\cU\rightarrow
\Fun 
(\RR,\Chnn (\vect_k))
$
be a functor, and fix \(q\in\NN\) and \(r,s\in\RR\)
with \(r\leq s\). 
Assume that \(\cA \) satisfies add one kernel-cokernel
stabilization at \(r\) and \(s\). Then, for any locally finite set \(P \subseteq \RR^d \),  there exist $\Delta(P) \in \ZZ$ and
\(F_0\in\mathcal{F}(P)  \) such that
$
\Delta(P) = D_0 \beta_q^{r,s}(\cA)(F)
$  for all \(F\supseteq F_0\) with $F \in \mathcal{F}(P) $.
\end{proposition}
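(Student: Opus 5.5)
The plan is to apply Theorem~\ref{cor:main_result} with the filtered poset \(\mathcal{I}=\mathcal{F}(P)\). We work in the arrow category of \(\Fun(\mathcal{F}(P),\Chnn(\vect_k))\) and build a morphism \((u,v)\colon f\to g\) as follows. The objects \(f\) and \(g\) are the structure maps in the \(\RR\)-direction,
\[
f:=\cA_P(r\le s)\colon \cA_{P,r}\to\cA_{P,s},\qquad g:=\cA_P^+(r\le s)\colon \cA^+_{P,r}\to\cA^+_{P,s}.
\]
These are morphisms of functors on \(\mathcal{F}(P)\), because \(\cA_P\) and \(\cA_P^+\) are functors valued in \(\Fun(\RR,\Chnn(\vect_k))\). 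The morphism between them is \((u,v):=(\add_{P,r},\add_{P,s})\). The square commutes because \(u^+\colon\cA\to\cA^+\) is a natural transformation of \(\Fun(\RR,\Chnn(\vect_k))\)-valued functors: each component \(\add_F\) is a morphism of \(\RR\)-indexed chain complexes, so it commutes with the structure map \(r\le s\).

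Next we check the hypotheses of Theorem~\ref{cor:main_result}. By assumption \(\cA\) satisfies add one kernel-cokernel stabilization at \(r\) and at \(s\). Since \(P\) is locally finite, Definition~\ref{def:add one-kernel-cokernel-stabilization} says exactly that \(u=\add_{P,r}\) and \(v=\add_{P,s}\) satisfy kernel-cokernel stabilization. All complexes involved are of the form \(\cA(F)(t)\) or \(\cA(F^+)(t)\), so they lie in \(\Chnn(\vect_k)\). Theorem~\ref{cor:main_result} then gives an integer \(\Delta(q)\) and an \(F_0\in\mathcal{F}(P)\) such that, for all \(F\supseteq F_0\),
\[
\Delta(q)=\dim_k\Image H_q(g_F)-\dim_k\Image H_q(f_F).
\]

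It remains to identify this difference with the add one cost. By definition, \(\dim_k\Image H_q(f_F)=\beta_q^{r,s}(\cA)(F)\) and \(\dim_k\Image H_q(g_F)=\beta_q^{r,s}(\cA)(F^+)=\beta_q^{r,s}(\cA)(F\cup\{0\})\). Hence the difference is \(D_0\beta_q^{r,s}(\cA)(F)\), and we set \(\Delta(P):=\Delta(q)\). This identification holds whether or not \(0\in P\), since \(F^+=F\cup\{0\}\) by definition, so no case split is needed.

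The main obstacle is the naturality bookkeeping: we must realize the pair of add one maps at \(r\) and \(s\) as a single morphism in \(\Mor(\Fun(\mathcal{F}(P),\Chnn(\vect_k)))\), using the equivalence \(\Fun(\mathcal{F}(P),\Fun(\RR,-))\cong\Fun(\RR,\Fun(\mathcal{F}(P),-))\). We also need the image of \(H_q\) to be computed objectwise in \(F\), which holds because images and homology in functor categories are computed pointwise. Once these are in place, the result follows directly from the algebraic theorem.
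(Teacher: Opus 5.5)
Your proposal is correct and is essentially the paper's own proof. The paper likewise forms the commutative square in $\Fun(\mathcal{F}(P),\Chnn(\vect_k))$ with horizontal add one maps $u^+_{P,r}, u^+_{P,s}$ and vertical structure morphisms $f,g$, applies Theorem~\ref{cor:main_result}, and identifies $\dim_k\Image H_q(g_F)-\dim_k\Image H_q(f_F)$ with $D_0\beta_q^{r,s}(\cA)(F)$.
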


\begin{proof}
  Let $P\subseteq \RR^d$ be locally finite. Evaluating the add one map $
\add_P\colon\cA _P\rightarrow\cA_P^+
$
at \(r\) and \(s\), we obtain the following commutative diagram in
\(\Fun 
(\mathcal{F}(P) ,\Chnn (\vect_k))\):
\[
\begin{tikzcd}
\cA _{P,r}
    \arrow[r,"u_{P,r}^+"]
    \arrow[d,"f"']
&
\cA _{P,r}^+
    \arrow[d,"g"]
\\
\cA _{P,s}
    \arrow[r,"u_{P,s}^+"']
&
\cA _{P,s}^+ ,
\end{tikzcd}
\]
where
$
f
$ and $
g
$
 are the structure morphisms. 
By assumption, \(u_{P,r}^+\) and \(u_{P,s}^+\) both satisfy kernel-cokernel stabilization. 
By Theorem~\ref{cor:main_result}, there
exists \(F_0\in\mathcal{F}(P) \) such that 
$
\dim_k\Image H_q(g_F)
-
\dim_k\Image H_q(f_F)
$
is constant for all \(F\supseteq F_0\) with $F \in \mathcal{F}(P) $. 
On the other hand, by definition, we have
$
\dim_k\Image H_q(f_F)
=
\beta_q^{r,s}(\cA (F))$ and $ 
\dim_k\Image H_q(g_F)
=
\beta_q^{r,s}
(\cA (F^+))$. Thus, the difference is equal to some
\(\Delta(P)\in\ZZ\) for every \(F\supseteq F_0\).
\end{proof}

\subsubsection{Add one boundedness for persistent Betti numbers}
\label{subsubsec:add one-bound}

We next discuss the consequence of add one kernel-cokernel boundedness.

We begin with two elementary lemmas from linear algebra.

\begin{lemma}
\label{lem:mapsdim}
Let
$
U \xrightarrow{\alpha}V\xrightarrow{\beta}W
$
be linear maps between finite-dimensional vector spaces. Then
\[
\dim_{k}\Coker(\beta\circ\alpha)
\leq
\dim_{k}\Coker(\alpha)
+
\dim_{k}\Coker(\beta)
\]
and
\[
\dim_{k}\Ker(\beta\circ\alpha)
\leq
\dim_{k}\Ker(\alpha)
+
\dim_{k}\Ker(\beta).
\]
\end{lemma}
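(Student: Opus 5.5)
For the cokernel inequality, I will compare images. Since $\Image(\beta\circ\alpha)\subseteq\Image(\beta)\subseteq W$, we have
\[
\dim_k\Coker(\beta\circ\alpha)=\dim_k W-\dim_k\Image(\beta\circ\alpha)
=\dim_k\Coker(\beta)+\bigl(\dim_k\Image(\beta)-\dim_k\Image(\beta\circ\alpha)\bigr).
\]
It therefore suffices to show $\dim_k\Image(\beta)-\dim_k\Image(\beta\alpha)\leq\dim_k\Coker(\alpha)$.

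To see this, I restrict $\beta$ to $V$ and pass to the quotient by $\Image(\alpha)$. The map $\beta$ induces a surjection
\[
V/\Image(\alpha)\twoheadrightarrow \Image(\beta)/\beta(\Image(\alpha)) = \Image(\beta)/\Image(\beta\alpha).
\]
Hence the dimension of the right-hand side is at most $\dim_k\Coker(\alpha)$, which gives the first inequality.

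For the kernel inequality, I will use the inclusion $\Ker(\alpha)\subseteq\Ker(\beta\alpha)$ and the restriction of $\alpha$ to $\Ker(\beta\alpha)$. This restriction maps $\Ker(\beta\alpha)$ into $\Ker(\beta)$, because $\beta\alpha(x)=0$ means $\alpha(x)\in\Ker\beta$. Its kernel is exactly $\Ker(\alpha)$. By rank–nullity applied to $\alpha|_{\Ker(\beta\alpha)}\colon\Ker(\beta\alpha)\to\Ker(\beta)$,
\[
\dim_k\Ker(\beta\alpha)=\dim_k\Ker(\alpha)+\dim_k\alpha(\Ker(\beta\alpha))\leq \dim_k\Ker(\alpha)+\dim_k\Ker(\beta).
\]

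Alternatively, both inequalities follow from the exact kernel–cokernel sequence
\[
0\to\Ker\alpha\to\Ker\beta\alpha\to\Ker\beta\to\Coker\alpha\to\Coker\beta\alpha\to\Coker\beta\to0,
\]
by dropping terms. The direct argument above avoids invoking the snake lemma, which is why I prefer it. Neither step presents a real obstacle. The only point needing care is checking that the induced quotient map in the cokernel case is well defined and surjective, and this is immediate since $\beta(\Image\alpha)=\Image(\beta\alpha)$.
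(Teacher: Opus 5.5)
Your proof is correct and is essentially the paper's argument. The paper takes dimensions in the exact sequences $\Coker(\alpha)\to\Coker(\beta\circ\alpha)\to\Coker(\beta)\to 0$ and $0\to\Ker(\alpha)\to\Ker(\beta\circ\alpha)\to\Ker(\beta)$, and your surjection $V/\Image(\alpha)\twoheadrightarrow\Image(\beta)/\Image(\beta\circ\alpha)$ and your restriction $\alpha|_{\Ker(\beta\circ\alpha)}\colon\Ker(\beta\circ\alpha)\to\Ker(\beta)$ with kernel $\Ker(\alpha)$ simply verify the exactness of these two sequences by hand.
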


\begin{proof}
There are exact sequences
\[
\Coker(\alpha)
\rightarrow
\Coker(\beta\circ\alpha)
\rightarrow
\Coker(\beta)
\rightarrow
0
\]
and
\[
0
\rightarrow
\Ker(\alpha)
\rightarrow
\Ker(\beta\circ\alpha)
\rightarrow
\Ker(\beta).
\]
 The
desired inequalities follow by taking dimensions in these exact sequences.
\end{proof}

\begin{lemma}
\label{lem:kercoker}
Let
\[
\begin{tikzcd}
V \arrow[r,"f"] \arrow[d,"u"']
  & V' \arrow[d,"u'"] \\
W \arrow[r,"g"']
  & W'
\end{tikzcd}
\]
be a commutative diagram of finite-dimensional vector spaces. Then
\[
-\dim_{k}\Ker(g)
\leq
\dim_{k}\Image (u')
-
\dim_{k}\Image (u)
\leq
\dim_{k}\Coker(f).
\]
\end{lemma}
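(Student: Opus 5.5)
The plan is to compare both images with the image of the diagonal composite $u'\circ f = g\circ u\colon V\to W'$, and to use Lemma~\ref{lem:mapsdim}-type counting for each comparison.

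For the upper bound, note that $\Image(u'\circ f)\subseteq \Image(u')$. Hence
\[
\dim_k\Image(u')-\dim_k\Image(u'f)
\]
is the dimension of $\Image(u')/u'(\Image f)$. This space is a quotient of $V'/\Image(f)=\Coker(f)$, via the surjection induced by $u'$. Therefore $\dim_k\Image(u')\le \dim_k\Image(u'f)+\dim_k\Coker(f)$. On the other hand, $u'f=gu$, so
\[
\Image(u'f)=g(\Image u),
\]
and this gives $\dim_k\Image(u'f)\le\dim_k\Image(u)$. Combining the two inequalities yields $\dim_k\Image(u')-\dim_k\Image(u)\le\dim_k\Coker(f)$.

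For the lower bound, use the same identification: $\Image(u')\supseteq\Image(u'f)=g(\Image u)$. Restrict $g$ to $\Image(u)$. The kernel of this restriction is $\Ker(g)\cap\Image(u)$, whose dimension is at most $\dim_k\Ker(g)$. Rank--nullity then gives
\[
\dim_k g(\Image u)\ge\dim_k\Image(u)-\dim_k\Ker(g).
\]
Hence $\dim_k\Image(u')\ge \dim_k\Image(u)-\dim_k\Ker(g)$, which is the left inequality.

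Each step is elementary, and finite-dimensionality makes all the dimensions finite so the subtractions are legitimate. The only point requiring care is that $u'$ induces a surjection from $\Coker(f)$ onto $\Image(u')/\Image(u'f)$. This holds because any element $u'(v')$ of $\Image(u')$ is represented by the class of $v'$, and elements of $\Image(f)$ map into $\Image(u'f)$.
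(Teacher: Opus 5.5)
Your proof is correct and follows essentially the same route as the paper: both bounds come from comparing $\Image(u)$ and $\Image(u')$ with the image of the diagonal composite $g\circ u=u'\circ f$. The lower bound uses $\dim_k\Image(u)-\dim_k\Image(g\circ u)\le\dim_k\Ker(g)$ exactly as the paper does, and your quotient argument via $\Coker(f)$ for the upper bound spells out the ``dual argument'' the paper leaves to the reader.
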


\begin{proof}
We first prove the left-hand inequality. Let
$
\iota\colon \Image (u)\hookrightarrow W
$
denote the canonical inclusion, and let
$
\delta\colon \Image (u)
\twoheadrightarrow
\Image (g\circ u)
$
be the map induced by \(g\). By the universal properties of kernels and
images, we obtain the following commutative diagram, whose middle row is
exact:
\[
\begin{tikzcd}[column sep=3.5em, row sep=2.8em]
  & V
      \arrow[r,"u"]
      \arrow[dr,two heads]
  & W
      \arrow[r,"g"]
  & W'
  \\
  0
      \arrow[r]
  & \Ker(\delta)
      \arrow[r,hook]
  & \Image (u)
      \arrow[r,two heads,"\delta"]
      \arrow[u,hook,"\iota"]
  & \Image (g\circ u)
      \arrow[r]
      \arrow[u,hook]
  & 0
  \\
  & \Ker(g\circ\iota)
      \arrow[u,"\cong"]
      \arrow[ur,hook]
\end{tikzcd}
\]
It follows immediately from this diagram that
\[
\begin{aligned}
\dim_{k}\Image (u)
-
\dim_{k}\Image (g\circ u)
=
\dim_{k}\Ker(\delta) 
=
\dim_{k}\Ker(g\circ\iota) 
\leq
\dim_{k}\Ker(g).
\end{aligned}
\]
Moreover, the commutativity of the original diagram gives
\[\dim_{k} 
\Image (g\circ u)
= \dim_{k}
\Image (u'\circ f)
\leq \dim_{k}
\Image (u').
\]
Therefore, we have
$
\dim_{k}\Image (u)
-
\dim_{k}\Image (u')
\leq
\dim_{k}\Image (u)
-
\dim_{k}\Image (g\circ u) 
\leq
\dim_{k}\Ker(g)
$, and obtain the left-hand inequality.
The right-hand inequality follows by the dual argument.
\end{proof}

For \(x\in\mathbb{R}^d\) and \(R \geq0 \), let
$B_x(R)$ denote the closed ball of radius \(R\) centered at \(x\). For a set $A$, we write $\# A$ for its cardinality. Now we prove the following.
\begin{proposition}
\label{prop:local-boundedness} Let
$
\cA \colon
\cU\rightarrow
\Fun 
(\RR,\Ch (\vect_k))
$
be a functor. 
Let \(q\in\ZZ \) and \(r,s\in\RR\) with \(r\leq s\).  If \(\cA \) satisfies the add one kernel-cokernel boundedness condition, then there exist constants $R, C \geq0 $ and $m \in \NN$ such that 
\[
\left|
D_0\beta_q^{r,s}(\cA )(P\cap A)
\right|
\leq
C\left(
1+\# P\cap {B_0(R)} 
\right)^m
\]
for every locally finite set \(P\subseteq\RR^d\) and every bounded set
\(A\subseteq\RR^d\).
\end{proposition}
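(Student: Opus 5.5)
Set \(F:=P\cap A\), which is finite since \(P\) is locally finite and \(A\) is bounded, so \(F\in\cU\). By Remark~\ref{rem:origin-already-present} we may assume \(0\notin F\); otherwise \(D_0\beta_q^{r,s}(\cA)(F)=0\). The plan is to apply Lemma~\ref{lem:kercoker} to the commutative square of homology groups
\[
\begin{tikzcd}
H_q(\cA(F)(r)) \arrow[r,"H_q(\add_{F,r})"] \arrow[d,"u"'] & H_q(\cA(F^+)(r)) \arrow[d,"u'"]\\
H_q(\cA(F)(s)) \arrow[r,"H_q(\add_{F,s})"'] & H_q(\cA(F^+)(s)),
\end{tikzcd}
\]
whose vertical maps are the structure maps. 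Since \(\dim_k\Image(u')-\dim_k\Image(u)=D_0\beta_q^{r,s}(\cA)(F)\), we get
\[
|D_0\beta_q^{r,s}(\cA)(F)|\le \dim_k\Ker H_q(\add_{F,s})+\dim_k\Coker H_q(\add_{F,r}).
\]

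The main step is to bound the kernel and cokernel of the map induced on homology by a chain map \(\phi\colon C\to D\) in terms of chain-level data. I would use the mapping cone. The long exact sequence~\eqref{eq:mapping-cone-long-exact-sequence} gives
\[
\dim_k\Coker H_q(\phi)\le \dim_k H_q(\Cone(\phi)), \qquad \dim_k\Ker H_q(\phi)\le \dim_k H_{q+1}(\Cone(\phi)).
\]
Next I would factor \(\phi=m\circ e\) through its image. With \(K=\Ker\phi\) and \(Q=\Coker\phi\), there are short exact sequences \(0\to K\to C\to\Image\phi\to0\) and \(0\to\Image\phi\to D\to Q\to0\). These give \(\Cone(e)\simeq K[1]\) (the cone of a surjection is quasi-isomorphic to the shifted kernel) and \(\Cone(m)\simeq Q\). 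The cone of the composite sits in the exact triangle \(\Cone(e)\to\Cone(\phi)\to\Cone(m)\). Hence
\[
\dim_k H_j(\Cone(\phi))\le \dim_k H_{j-1}(K)+\dim_k H_j(Q)\le \dim_k K_{j-1}+\dim_k Q_j.
\]
This is bounded by the degreewise kernel and cokernel dimensions of \(\phi\).

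Applying this with \(\phi=\add_{F,r}\) at \(j=q\), and with \(\phi=\add_{F,s}\) at \(j=q+1\), bounds the right-hand side of the first inequality by the sum of four terms of the form \(c(j,t)(1+\#(F\cap B_0(\rho(t))))^{\tau(j,t)}\), where \(j\in\{q-1,q,q+1\}\) and \(t\in\{r,s\}\). Set \(R:=\max(\rho(r),\rho(s))\), \(m:=\max\tau(j,t)\) over these finitely many pairs, and let \(C\) be the sum of the corresponding \(c(j,t)\). Since \(\#(F\cap B_0(\rho(t)))\le\#(P\cap B_0(R))\), this yields the stated bound uniformly in \(P\) and \(A\).

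The main obstacle is the cone estimate: getting the degree shifts right and justifying the triangle \(\Cone(e)\to\Cone(\phi)\to\Cone(m)\). If that becomes delicate, I would avoid triangulated language and argue directly. Use the long exact sequences of the two short exact sequences above. First, \(\Coker H_q(\phi)\) has a filtration with pieces bounded by \(\Coker H_q(e)\hookrightarrow H_{q-1}(K)\) and \(\Coker H_q(m)\hookrightarrow H_q(Q)\). Second, \(\dim_k\Ker H_q(\phi)\le \dim_k\Ker H_q(e)+\dim_k\Ker H_q(m)\), which is at most \(\dim_k H_q(K)+\dim_k H_{q+1}(Q)\), by Lemma~\ref{lem:mapsdim}. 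This direct route gives the same polynomial bound, with degrees \(q-1,q,q+1\).
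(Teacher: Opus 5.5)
Your proposal is correct, and your fallback argument is exactly the paper's proof. The paper applies Lemma~\ref{lem:kercoker} to the same square and factors $u^+_{F,t}$ through its image. It then uses Lemma~\ref{lem:mapsdim} to split the kernel and cokernel of $H_q(u^+_{F,t})$ over the two factors. Finally, it bounds each piece using the long exact sequences of $0\to\Ker(u^+_{F,t})\to\cA(F)(t)\to\Image(u^+_{F,t})\to 0$ and $0\to\Image(u^+_{F,t})\to\cA(F^+)(t)\to\Coker(u^+_{F,t})\to 0$. This yields the same four chain-level terms $\dim_k(\Ker u^+_{F,r})_{q-1}$, $\dim_k(\Coker u^+_{F,r})_q$, $\dim_k(\Ker u^+_{F,s})_q$, $\dim_k(\Coker u^+_{F,s})_{q+1}$ that you obtain.

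Your primary route through $H_j(\Cone(\phi))$ is a valid repackaging of the same estimate. The identifications $H_j(\Cone(e))\cong H_{j-1}(K)$ and $H_j(\Cone(m))\cong H_j(Q)$ are right. Only exactness in the middle of the long exact sequence of the triangle $\Cone(e)\to\Cone(\phi)\to\Cone(m)$ is needed.

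The cost is that this triangle does not come from a short exact sequence of complexes. The composite of the natural maps, $(y,x)\mapsto(m(y),e(x))$, is nonzero and only null-homotopic. So you need the octahedral axiom or an explicit homotopy argument, which the paper's elementary route avoids. The cone route also gains nothing in sharpness, since it produces the same four terms.
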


\begin{proof}
Let $P \subseteq \RR^d$ be locally finite. Take a bounded set $A\subseteq \RR^d$ and set $F:= P \cap A$. Notice that $F \in \cU$ since $P$ is locally finite.
Applying Lemma~\ref{lem:kercoker} to the
commutative diagram
\[
\begin{tikzcd}[column sep=2.5cm]
H_q(\cA (F)(r))
    \arrow[r,"H_q(u^+_{F,r})"]
    \arrow[d]
&
H_q(\cA (F^{+})(r))
    \arrow[d]
\\
H_q(\cA (F)(s))
    \arrow[r,"H_q(u^+_{F,s})"']
&
H_q(\cA (F^{+})(s))
\end{tikzcd}
\]
gives
\begin{equation}
\label{eq:first}
\left|
D_0\beta_q^{r,s}(\cA )(F)
\right|
\leq
\dim_{k}\Coker H_q(u^+_{F,r})
+
\dim_{k}\Ker H_q(u^+_{F,s}).
\end{equation}

For \(t\in\{r,s\}\), consider the image factorization
\begin{equation}
\label{eq:imdecomp}
\cA (F)(t)
\overset{\alpha_t}{\twoheadrightarrow}
\Image (u^+_{F,t})
\overset{\beta_t}{\hookrightarrow}
\cA (F^{+})(t),
\qquad
\beta_t\circ\alpha_t=u^+_{F,t}.
\end{equation}
Taking homology yields
\[
H_q(\cA (F)(t))
\xrightarrow{H_q(\alpha_t)}
H_q(\Image (u^+_{F,t}))
\xrightarrow{H_q(\beta_t)}
H_q(\cA (F^{+})(t)).
\]
Lemma~\ref{lem:mapsdim} therefore implies that
\begin{align}
\dim_{k}\Coker H_q(u^+_{F,t})
&\leq
\dim_{k}\Coker H_q(\alpha_t)
+
\dim_{k}\Coker H_q(\beta_t),
\label{eq:second-a}
\\
\dim_{k}\Ker H_q(u^+_{F,t})
&\leq
\dim_{k}\Ker H_q(\alpha_t)
+
\dim_{k}\Ker H_q(\beta_t).
\label{eq:second-b}
\end{align}

The image factorization~\eqref{eq:imdecomp} gives the two short exact
sequences
\[
0
\rightarrow
\Ker(u^+_{F,t})
\rightarrow
\cA (F)(t)
\xrightarrow{\alpha_t}
\Image (u^+_{F,t})
\rightarrow
0
\]
and
\[
0
\rightarrow
\Image (u^+_{F,t})
\xrightarrow{\beta_t}
\cA (F^{+})(t)
\rightarrow
\Coker(u^+_{F,t})
\rightarrow
0.
\]
Their associated long exact sequences in homology yield
\begin{align}
\dim_{k}\Ker H_q(\alpha_t)
&\leq
\dim_{k}(\Ker(u^+_{F,t}))_q,
\label{eq:third-a}
\\
\dim_{k}\Coker H_q(\alpha_t)
&\leq
\dim_{k}(\Ker(u^+_{F,t}))_{q-1},
\label{eq:third-b}
\\
\dim_{k}\Ker H_q(\beta_t)
&\leq
\dim_{k}(\Coker(u^+_{F,t}))_{q+1},
\label{eq:third-c}
\\
\dim_{k}\Coker H_q(\beta_t)
&\leq
\dim_{k}(\Coker(u^+_{F,t}))_q.
\label{eq:third-d}
\end{align}
Combining \eqref{eq:first}--\eqref{eq:third-d}, we obtain
\[
\begin{aligned}
\left|
D_0\beta_q^{r,s}(\cA )(F)
\right|
&\leq
\dim_{k}(\Ker(u^+_{F,r}))_{q-1}  + \dim_{k}(\Coker(u^+_{F,r}))_q \\ 
&+
\dim_{k}(\Ker(u^+_{F,s}))_q +
\dim_{k}(\Coker(u^+_{F,s}))_{q+1}.
\end{aligned}
\]

Define $R,C\geq 0$ and $m \in \NN$, independently of $P$ and \(A\), by 
\[
R:=\max\{\rho(r),\rho(s)\},
\
m:=\max_{\substack{j \in \{q-1,q,q+1\}\\
                t \in \{r,s\}}}
\tau(j,t) ,\ C:= 4
\max_{\substack{j \in \{q-1,q,q+1\}\\
                t \in \{r,s\}}}c(j,t)
\]
 where $\rho, \tau$ and $c$ are the maps given by the add one kernel-cokernel boundedness assumption. By the boundedness condition again we obtain
\[
\left|
D_0\beta_q^{r,s}(\cA )(F)
\right|
\leq
C\left(
1+\#(F\cap B_0(R))
\right)^m \leq  C\left(
1+\#(P\cap B_0(R))
\right)^m .
\]
Since $R,m,C$ are independent of $P$ and $A$, we obtain the desired assertion.
\end{proof}

%% file: 5_Criterion.tex
\section{Central limit theorems for admissible functors}\label{sec:criterion}

In this section, we derive central limit theorems for persistent Betti
number functionals associated with admissible functors.

\subsection{Probabilistic preliminaries}

This subsection recalls the stabilization and moment conditions used in
the central limit theorems for Poisson and Gibbs point processes.

Let \(d\in\mathbb{N}_{\geq1}\). 
We write
$
(\mathbb{R}^d,\cB(\mathbb{R}^d),\mathfrak{m})
$ for the \(d\)-dimensional Lebesgue measure space, where
\(\cB(\mathbb{R}^d)\) denotes the Borel \(\sigma\)-algebra and
\(\mathfrak{m}\) denotes Lebesgue measure. 
Let
$\lf$ denote the space of all locally finite subsets of $\mathbb{R}^{d}$, equipped with the $\sigma$-algebra generated by the maps
$
    P\mapsto\#(P\cap B),
$
where $B\subseteq\mathbb{R}^{d}$ is a bounded Borel set. Via the natural inclusion $\cU \to \lf$, we equip $\cU$ with the induced $\sigma$-algebra. 


\subsubsection{Poisson point processes}
\label{subsec:poisson-preliminaries}
We refer the reader to \cite{LP17} for background on Poisson point
processes. Let \(\eta\) be a homogeneous Poisson point process with unit intensity on $\RR^d$ defined on a probability space $(\Omega, \mathrm{F}, \mathbb{P})$. Since \(\eta\) is simple, we identify it with its support 
$
\mathcal{P}(\omega)
:=
\left\{
x\in\mathbb{R}^d:
\eta(\omega,\{x\})>0
\right\}.
$ For a Borel set
\(A\subseteq\mathbb{R}^d\), set
$
\mathcal{P}_A:=\mathcal{P}\cap A
$. Recall that, if \(A\in\cB(\mathbb{R}^d)\) and
\(\mathfrak{m}(A)<\infty\), then the expected value
$
\mathbb{E}\!\left[(\#\mathcal{P}_A)^m\right]
$ is finite for every \(m\in\mathbb{N}\).

For $A \subseteq \RR^d$, we set 
$
A^{(r)}
:=
\{
x\in\mathbb{R}^d
\mathrel \mid
\inf_{a\in A}\lVert x-a\rVert\leq r
\}
$ and, we denote $\partial A$ by the boundary of $A$.
Throughout this paper, let
\(\{W_n\}_{n\in\mathbb{N}}\) be a sequence of Borel subsets of
\(\mathbb{R}^d\) satisfying the following conditions:
\begin{enumerate}
\item[\textnormal{(A1)}]
For every \(n\in\mathbb{N}\),
$
\mathfrak{m}(W_n)=n.
$
\item[\textnormal{(A2)}] 
The sequence $\{W_n\}$ tends to $\RR^d$, i.e.,
$
\bigcup_{n\geq 1}\bigcap_{m\geq n}W_m
=
\mathbb{R}^d.
$

\item[\textnormal{(A3)}]
For every \(r>0\),
$
\lim_{n\to\infty} n^{-1}
{\mathfrak{m}((\partial W_n)^{(r)})}
=
0.
$

\item[\textnormal{(A4)}]
There exists a constant \(\gamma>0\) such that
$
\operatorname{diam}(W_n)
\leq
\gamma n^\gamma
$
for every \(n\in\mathbb{N}\).
\end{enumerate}
For example, let
\begin{equation}\label{eq:Deltan}
\Lambda_n
:=
[-n^{1/d}/2,n^{1/d}/2)^d.    
\end{equation}
 Then \(\Lambda_n\) is a half-open cube of side length \(n^{1/d}\), and the
sequence \(\{\Lambda_n\}_{n\in\mathbb{N}}\) satisfies
\textnormal{(A1)}--\textnormal{(A4)}.  We set
$
\mathcal{W}
:=
\{
W_n+x
\bigm|
n\in\mathbb{N},\ x\in\mathbb{R}^d
\bigr\}
$.

A functional
$H\colon\cU \rightarrow\mathbb{R}
$
is said to be \emph{translation invariant} if
$
H(F+y)=H(F)
$
for every \(F\in\cU \) and \(y\in\mathbb{R}^d\).
We say that \(H\) satisfies the \emph{Poisson bounded moment condition}
on \(\mathcal{W}\) if
\[
\sup_{\substack{A\in\mathcal{W}, 0\in A}}
\mathbb{E}
\left[
\left|D_0H(\cP_A)\right|^4
\right]
<
\infty.
\]
We say that \(H\) is \emph{weakly stabilizing} on \(\mathcal{W}\) with respect to $\mathcal{P}$ if there exists a random variable
\(D\colon\Omega\to\mathbb{R}\) such that, for every sequence
\(\{A_n\}_{n\in\mathbb{N}}\) with \(A_n\in\mathcal{W}\) tending to
$
\mathbb{R}^d,
$
one has
$
D_0H(\cP_{A_n})
\xrightarrow{\text{a.s}}
D$
 as 
$n\to\infty.
$

We conclude this subsection by recalling a sufficient condition for a central
limit theorem for functionals of finite point configurations.

\begin{lemma}[{\cite[Theorem~3.1]{PY01}}]
\label{lem:clt}
Let
$
H\colon\cU  \rightarrow\mathbb{R}
$
be a measurable translation invariant functional. Suppose that \(H\) is
weakly stabilizing on \(\mathcal{W}\) and satisfies the Poisson bounded moment condition on \(\mathcal{W}\).
Then there exists \(\sigma^2 \in [0, \infty) \) such that
$n^{-1} \Var(H (\cP_{W_n}) ) \to \sigma^2$ and 
$n^{-1/2}( H(\cP_{W_n})  - \EE[ H(\cP_{W_n})  ] ) \xrightarrow{d} \mathcal{N}(0,\sigma^2)$  as $n \to \infty$.
\end{lemma}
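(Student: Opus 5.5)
This lemma is not proved from scratch. The plan is to obtain it directly from \cite[Theorem~3.1]{PY01}, specialized to unit intensity, by checking that each hypothesis there is exactly one of the notions defined above. The work is therefore a dictionary between the two settings, carried out in the following order.

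First, the setting. In \cite{PY01} a functional is defined on finite subsets of \(\mathbb{R}^d\) and is required to be measurable and translation invariant. Under the identification of \(\cU\) with a measurable subset of \(\lf\) fixed at the start of this section, this is our hypothesis on \(H\) verbatim. We take the Poisson process there to have intensity \(1\), which gives our \(\mathcal{P}\).

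Second, the window sequence. Penrose and Yukich work with a sequence of Borel sets \(W_n\) of volume \(n\) that tends to \(\mathbb{R}^d\), has vanishing relative boundary volume, and has at most polynomial diameter growth. These are precisely \textnormal{(A1)}--\textnormal{(A4)}, with \(\bigcup_{n}\bigcap_{m\ge n}W_m=\mathbb{R}^d\) read as their notion of ``tending to \(\mathbb{R}^d\)''. Their class of admissible regions, obtained by translating the \(W_n\), is our \(\mathcal{W}\).

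Third, the two probabilistic hypotheses. The add one cost \(D_0H(F)=H(F\cup\{0\})-H(F)\) of \eqref{eq:add-one-cost} is the same difference operator used in \cite{PY01}. Consequently, our weak stabilization on \(\mathcal{W}\) is theirs: a single limit \(D\) such that \(D_0H(\mathcal{P}_{A_n})\to D\) almost surely for every sequence \(A_n\in\mathcal{W}\) tending to \(\mathbb{R}^d\). Likewise, our Poisson bounded moment condition is their uniform bounded fourth moment condition, with the supremum taken over \(A\in\mathcal{W}\) containing the origin.

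With these identifications, \cite[Theorem~3.1]{PY01} yields the existence of \(\sigma^2\in[0,\infty)\) with \(n^{-1}\Var(H(\mathcal{P}_{W_n}))\to\sigma^2\), and the asymptotic normality of \(n^{-1/2}(H(\mathcal{P}_{W_n})-\EE[H(\mathcal{P}_{W_n})])\) with limit \(\mathcal{N}(0,\sigma^2)\), which is the claim.

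The main obstacle is convention bookkeeping rather than new mathematics. Specifically, one must confirm the following.
\begin{enumerate}
\item The notion of a sequence of sets tending to \(\mathbb{R}^d\) used in \cite{PY01} coincides with \textnormal{(A2)}. If it is instead phrased as \(\liminf_n A_n=\mathbb{R}^d\), I would note that the two phrasings agree.
\item The moment condition in \cite{PY01} quantifies over translates of the \(W_n\) containing the origin, and not over a larger class of sets.
\item The degenerate case \(\sigma^2=0\) is covered, with \(\mathcal{N}(0,0)\) read as the point mass at \(0\).
\end{enumerate}
If any of these conventions differs slightly, I would pass to our formulation by a direct reduction. For example, a stronger form of stabilization in \cite{PY01} would follow from ours, since our condition is stated for all sequences in \(\mathcal{W}\) tending to \(\mathbb{R}^d\) with a single limit \(D\).
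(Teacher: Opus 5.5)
Your proposal is correct and matches what the paper does. The paper gives no proof of this lemma: it quotes \cite[Theorem~3.1]{PY01} directly, with \textnormal{(A1)}--\textnormal{(A4)}, the class \(\mathcal{W}\), weak stabilization, and the Poisson bounded moment condition transcribed from Penrose and Yukich's hypotheses for a unit-intensity Poisson process, which is exactly the dictionary you propose.
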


\subsubsection{Gibbs point processes}
\label{subsec:gibbs-preliminaries}
We next recall the central limit theorem for Gibbs point processes (\cite[Theorem~3]{HOS25}). We use this result to show that our algebraic method applies beyond the Poisson setting. Since our
aim is to verify the assumptions of this theorem for persistent Betti number functionals, we recall only the terminology and assumptions needed for this application and refer the reader
to~\cite{HOS25, Der19} for further background on Gibbs point processes.

Let $\mathcal{X}$ be an infinite-volume Gibbs point process with
Papangelou conditional intensity
$
    \kappa\colon
    \mathbb{R}^{d}\times
    \mathcal{F}_{\mathrm{lf}}(\mathbb{R}^{d})
    \rightarrow[0,\infty).
$
We assume that $\kappa$ is translation invariant, bounded by some
$\alpha _0>0$, and has finite interaction range
$r_0<r_c(\alpha _0)$, where $r_c(\alpha _0)$ denotes the infimum of all $r > 0$ such that $ \bigcup_{x \in \cP_{\alpha_0} } B_x (r/2)$ has an unbounded connected component with positive probability, where $\mathcal
{P}_{\alpha_0}$ is a Poisson point process with intensity $\alpha_0$.
Following \cite{HOS25}, we assume that
\begin{equation}
\label{eq:gibbs-process-assumption}
0\leq\kappa(x,P)\leq\alpha_{0}
\quad\text{and}\quad
r_{0}<r_{c}(\alpha_{0}).
\tag{G}
\end{equation}
Recall that a homogeneous Poisson point process of intensity $\alpha_{0}$ corresponds to the constant Papangelou
conditional intensity $\kappa= \alpha_{0}$.

For $n\in \ZZ_{\geq0}, z \in \RR^d$, set
$
    Q_{n}:=[-n/2,n/2]^{d}
$ and $ Q_{z,n}:=  Q_{n} + z$.
We set $
\mathcal{Q}
:=
\{
Q_n+z
\bigm|
n\in\mathbb{N},\ z\in\mathbb{Z}^d
\bigr\}
$.
For a functional $H\colon\mathcal{F}(\mathbb{R}^{d})\to\mathbb{R}$, define
$H_{n}(P):=H(P\cap Q_{n})$ for $P\in\lf.$ 
We say that $H$ satisfies the Gibbs moment condition if there exist constants $c,
p>0$ such that
\begin{equation}
\label{eq:gibbs-moment-condition}
\mathbb{E}\left[
\left|
H_{n}(\mathcal{X})-
H_{n}(\mathcal{X}\setminus (Q_{m}+z))
\right|^{5}
\right]
\leq
c m^{p}
\end{equation}
for every $n,m\in\mathbb{N}$ and $z\in\mathbb{Z}^{d}$.
We say that $H$ is weakly stabilizing with respect to $\mathcal{X}$
if, almost surely,
\begin{equation}
\label{eq:gibbs-weak-stabilization}
\lim_{n\to\infty}
(
H(\mathcal{X}\cap A_n)
-
H(
(\mathcal{X}\cap A_n )\setminus Q_{l}
)
)
\end{equation}
exists for every $l \in \ZZ_{\geq0}$ and every sequence $\{A_n \in \mathcal{Q}\}_n$ tending to $\RR^d$.

\begin{lemma}[{\cite[Theorem~3]{HOS25}}]
\label{lem:gibbs-clt}
Let $\mathcal{X}$ be an infinite-volume Gibbs point process with
translation-invariant Papangelou conditional intensity satisfying
\eqref{eq:gibbs-process-assumption}. Let
$H\colon\mathcal{F}(\mathbb{R}^{d})\to\mathbb{R}$ be a measurable
translation invariant functional satisfying
 \eqref{eq:gibbs-moment-condition} and
\eqref{eq:gibbs-weak-stabilization}. Then there exists
$\sigma^{2}\in[0,\infty)$ such that
$
    \mathfrak{m}(Q_{n})^{-1}
    \operatorname{Var}(H_{n}(\mathcal{X}))
    \rightarrow \sigma^{2}
$
and
$
\mathfrak{m}(Q_{n})^{-1/2}
\left(
 H_{n}(\mathcal{X})
 -
 \mathbb{E}[H_{n}(\mathcal{X})]
\right)
\xrightarrow{d}
N(0,\sigma^{2})
$
as $n\to\infty$.
\end{lemma}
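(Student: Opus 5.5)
This lemma is not proved in the paper itself; it is quoted from \cite[Theorem~3]{HOS25}. My plan is therefore to reduce it to that theorem by checking, one item at a time, that the hypotheses and conclusions as stated here are exactly those of Hirsch, Otto, and Svane.

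First, I would check the standing assumptions on the point process. In \cite{HOS25}, the process \(\mathcal{X}\) is an infinite-volume Gibbs point process on \(\RR^d\) whose Papangelou conditional intensity \(\kappa\) satisfies three conditions:
\begin{enumerate}
\item \(\kappa\) is translation invariant;
\item \(\kappa\) is bounded by a constant \(\alpha_0\);
\item \(\kappa\) has finite range \(r_0\) strictly below the critical radius \(r_c(\alpha_0)\) for continuum percolation of the Poisson process of intensity \(\alpha_0\).
\end{enumerate}
Condition \eqref{eq:gibbs-process-assumption}, together with translation invariance, is precisely this hypothesis. The role of subcriticality is to supply the disagreement-coupling and exponential decay of correlations that drive their proof. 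We would not reprove any of this; we only confirm that the definition of \(r_c(\alpha_0)\) used above, via unbounded components of \(\bigcup_{x\in\mathcal{P}_{\alpha_0}}B_x(r/2)\), agrees with theirs.

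Second, I would match the functional-side hypotheses. The theorem in \cite{HOS25} concerns \(H_n(\mathcal{X})=H(\mathcal{X}\cap Q_n)\) for a measurable, translation invariant \(H\) on finite configurations. It requires two conditions:
\begin{enumerate}
\item a moment bound on the effect of deleting the points in a cube \(Q_m+z\), namely a fifth-moment estimate of polynomial order \(m^p\), uniformly in \(n\) and \(z\in\ZZ^d\);
\item a weak stabilization requirement that \(H(\mathcal{X}\cap A_n)-H((\mathcal{X}\cap A_n)\setminus Q_l)\) converges almost surely along cubes \(A_n\in\mathcal{Q}\) tending to \(\RR^d\).
\end{enumerate}
These are \eqref{eq:gibbs-moment-condition} and \eqref{eq:gibbs-weak-stabilization} verbatim, so the only work is a bookkeeping comparison of index sets. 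Specifically, one needs \(z\in\ZZ^d\), \(m,n\in\NN\), and the class \(\mathcal{Q}\) of integer-translated cubes.

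Third, I would read off the conclusion. Their normalisation is by the volume \(\mathfrak{m}(Q_n)=n^d\), which yields the variance asymptotics and the Gaussian limit in exactly the form stated, with \(\sigma^2\in[0,\infty)\).

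The main obstacle I expect is purely one of conventions: whether \cite{HOS25} phrase stabilization via removal of cubes, as here, or via add-one or add-cube costs, and whether their moment exponent and cube lattice coincide with ours. If the two formulations differ, I would first try to show that ours implies theirs, using the polynomial slack in \(m^p\). This slack should absorb any mismatch between integer and real translates, or between open and closed cubes.
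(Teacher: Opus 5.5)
Your proposal matches the paper, which imports this lemma directly from \cite[Theorem~3]{HOS25} without proof and phrases \eqref{eq:gibbs-process-assumption}, \eqref{eq:gibbs-moment-condition} and \eqref{eq:gibbs-weak-stabilization} to coincide with that theorem's hypotheses, so checking that the hypotheses correspond is all that is needed. Your fallback about using the \(m^p\) slack to absorb convention mismatches is speculative but unnecessary, and it does not affect correctness.
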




\subsection{Central limit theorems for persistent Betti number functionals}\label{subsec:CLT}

We now prove the main theorems in this paper.

\begin{theorem}\label{thm:clt_main}
Let
$
\cA \colon
\cU\rightarrow
\Fun 
(\RR,\Chnn (\vect_k))
$
be a functor. Let \(q\in\NN\) and \(r,s\in\RR\) with \(r\leq s\).
Suppose that $\cA$ is admissible at $r$ and $s$.
Assume further that the 
functional
$
\beta_q^{r,s}(\cA )
$
is measurable.

Let \(\mathcal{P}\) be a homogeneous Poisson point process of unit
intensity on \(\RR^d\).
For every sequence \(\{W_n\}_{n\geq 1}\) satisfying
conditions \textup{(A1)}--\textup{(A4)}, there exists $
\sigma_{q,r,s}^2\in[0,\infty)
$ such that as $n \to \infty$, 
$ n^{-1}
\Var\!\left[
\beta_q^{r,s}
(\cA) (\mathcal{P}_{W_n}))
\right]
\to
\sigma_{q,r,s}^2
$
and 
$
n^{-1/2}
(
\beta_q^{r,s}
(\cA )(\mathcal{P}_{W_n}))
-
\mathbb{E}\!\left[
\beta_q^{r,s}
(\cA) (\mathcal{P}_{W_n}))
\right]
)
\xrightarrow{\mathrm{d}}
\mathcal{N}(0,\sigma_{q,r,s}^2).
$
\end{theorem}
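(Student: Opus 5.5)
The plan is to apply Lemma~\ref{lem:clt} to $H:=\beta_q^{r,s}(\cA)$ with $\mathcal{W}=\{W_n+x\}$. Measurability is assumed, so three conditions remain: translation invariance of $H$, weak stabilization on $\mathcal{W}$, and the Poisson bounded moment condition. Translation invariance is immediate. The isomorphism $\cA(F+x)\cong\cA(F)$ of $\RR$-indexed chain complexes commutes with the structure maps $r\le s$, so the ranks of $H_q(\cA(F)(r))\to H_q(\cA(F)(s))$ agree and $H(F+x)=H(F)$. (If the isomorphism in the definition is only objectwise in $t$, I will read it as an isomorphism in $\Fun(\RR,\Chnn(\vect_k))$, which is what the definition states.)

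For weak stabilization, fix $\omega$ outside a null set on which $\mathcal{P}(\omega)$ is locally finite. Apply Proposition~\ref{prop:deterministic-weakly-stab} with $P=\mathcal{P}(\omega)$. This gives $\Delta(P)\in\ZZ$ and a finite $F_0\subseteq P$ such that $D_0H(F)=\Delta(P)$ whenever $F_0\subseteq F\in\mathcal{F}(P)$. Set $D(\omega):=\Delta(\mathcal{P}(\omega))$.

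Now let $A_n\in\mathcal{W}$ tend to $\RR^d$. Since $F_0$ is finite, I can cover it by a small open neighbourhood: each point of $F_0$ lies in $\bigcap_{m\ge n}A_m$ for large $n$, by the definition of tending to $\RR^d$ (as in (A2)). So $F_0\subseteq \mathcal{P}_{A_n}$ for all large $n$. Each $\mathcal{P}_{A_n}$ is finite because $W_n$ is bounded by (A4) and $P$ is locally finite. Hence $D_0H(\mathcal{P}_{A_n})=D$ eventually, which gives almost-sure convergence. Measurability of $D$ follows since it is an a.s. limit of measurable variables (when one fixed sequence such as $\Lambda_n$ is used), possibly after modification on a null set.

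For the moment condition, Proposition~\ref{prop:local-boundedness} with $A\in\mathcal{W}$ bounded gives
\[
|D_0H(\mathcal{P}_A)|\le C\,(1+\#\mathcal{P}_{B_0(R)})^m
\]
uniformly in $A$. Then $\mathbb{E}[|D_0H(\mathcal{P}_A)|^4]\le C^4\,\mathbb{E}[(1+\#\mathcal{P}_{B_0(R)})^{4m}]<\infty$ because Poisson counts on bounded sets have all moments, and the bound is uniform over $A\in\mathcal{W}$ with $0\in A$. Lemma~\ref{lem:clt} then yields the conclusion.

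I expect the only delicate step to be the weak stabilization. There the point is that the deterministic index $F_0$ depends on $\omega$ but not on the sequence $A_n$, so one variable $D$ serves all sequences. A secondary issue is the measurability of $D$, which I would handle through its definition as a limit along $\Lambda_n$.
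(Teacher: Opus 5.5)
Your proposal is correct and follows the paper's proof essentially step for step: you reduce to Lemma~\ref{lem:clt}, get weak stabilization from Proposition~\ref{prop:deterministic-weakly-stab} using an $F_0$ that depends on $\omega$ but not on the sequence $A_n$, and get the uniform fourth-moment bound from Proposition~\ref{prop:local-boundedness} together with the finite moments of $\#\mathcal{P}_{B_0(R)}$. Your extra remarks are details the paper leaves implicit: that the translation isomorphism must be natural in $t$, that $A_n$ is bounded by (A4), and that $D$ is measurable as a pointwise limit.
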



\begin{proof}
By Lemma~\ref{lem:clt}, it suffices to show that the functional \(\beta_q^{r,s}(\cA )\) is translation invariant and weakly stabilizing and satisfies the Poisson bounded moment condition.

The translation invariance of \(\cA \) implies that the functional
\(\beta_q^{r,s}(\cA )\) is translation invariant. 

We next show that \(\beta_q^{r,s}(\cA )\) is weakly stabilizing. 
 Let $P:= \cP(\omega)$ for $\omega \in \Omega$. By Proposition~\ref{prop:deterministic-weakly-stab}, there exist a constant $D(\omega) \in \ZZ$ and
\(F_0\in\mathcal{F}(P)  \) such that $D(\omega) = D_0 \beta_q^{r,s}(\cA(F))$ for all \(F\supseteq F_0\) with $F \in \mathcal{F}(P) $.
Let \( \{A_n \in \cW \}_{n\geq 1}\) be any sequence tending to $\RR^d$.
Since \(F_0\) is finite, there exists \(N\in\mathbb{N}\) such that
\(F_0\subseteq A_n\) for all \(n\geq N\). 
If $P$ is locally finite, then
$P_{A_n} \in \cU$. Thus,
$
(D_0\beta_q^{r,s}(\cA ))(P_{A_n})
=
D(\omega)
$
for all 
$
n\geq N.
$
Since $\cP$ is almost surely locally finite, we can obtain
\[
\lim_{n\to\infty}
(D_0\beta_q^{r,s}(\cA ))(P_{A_n})= D \text{ a.s.}
\]
for a random variable $D \colon \Omega \to \RR.$  Thus, the functional is weakly stabilizing.

Finally, we show that
\(\beta_q^{r,s}(\cA )\)  satisfies the Poisson bounded moment condition.
Let \(A\in\mathcal{W}\) with \(0\in A\). Since $\cP$ is almost surely locally finite, 
 by Proposition~\ref{prop:local-boundedness}, there exist constants $R,C\geq0$ and $m \in \NN$, independent of $A$,  such that 
\[
\left|
D_0\beta_q^{r,s}(\cA )(\mathcal{P}_A)
\right|
\leq
C(
1+\# \mathcal{P}_{B_0(R)} 
)^m \text{ a.s.}
\]
Since \(\#\mathcal{P}_{B_0(R)}\) has moments of all orders, the functional satisfies the Poisson bounded moment condition.
\end{proof}

Notice that Theorem~\ref{thm:clt-main} is derived from this theorem by taking $W_n = \Lambda_{n}$ given by Equation~\ref{eq:Deltan}.

We next consider Gibbs point processes. In contrast to the Poisson setting,
the observation windows in this subsection are the cubes $Q_{n}$
introduced in Subsection~\ref{subsec:gibbs-preliminaries}.

\begin{theorem}
\label{thm:gibbs-clt-main}
Let $
\cA\colon
\mathcal{F}(\mathbb{R}^{d})
\rightarrow
\Fun(
 \mathbb{R},
 \Chnn(\vect_{k}))
$ be a functor. Let $q\in\mathbb{N}$ and
$r,s\in\mathbb{R}$ with $r\leq s$. Suppose that $\cA$ is
admissible at $r$ and $s$ and that
$\beta_{q}^{r,s}(\cA)$ is measurable.

Let $\mathcal{X}$ be an infinite-volume Gibbs point process with
translation-invariant Papangelou conditional intensity satisfying
\eqref{eq:gibbs-process-assumption}. Then there exists
$\sigma_{q,r,s}^{2}\in[0,\infty)$ such that
$
 \mathfrak{m}(Q_{n})^{-1}
\operatorname{Var}\left[
 \beta_{q}^{r,s}(\cA)(\mathcal{X}_{Q_{n}})
\right]
\rightarrow
\sigma_{q,r,s}^{2}
$
and
$
 \mathfrak{m}(Q_{n})^{-1/2}
\left(
 \beta_{q}^{r,s}(\cA)(\mathcal{X}_{Q_{n}})
 -
 \mathbb{E}\left[
  \beta_{q}^{r,s}(\cA)(\mathcal{X}_{ Q_{n}})
 \right]
\right)
\xrightarrow{d}
\mathcal{N}(0,\sigma_{q,r,s}^{2})
$
as $n\to\infty$.
\end{theorem}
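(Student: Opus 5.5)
By Lemma~\ref{lem:gibbs-clt}, it suffices to verify three properties of $H:=\beta_q^{r,s}(\cA)$, which is measurable by hypothesis: translation invariance, the Gibbs moment condition \eqref{eq:gibbs-moment-condition}, and weak stabilization \eqref{eq:gibbs-weak-stabilization}. Translation invariance follows from that of $\cA$, exactly as in the proof of Theorem~\ref{thm:clt_main}. The main difference from the Poisson case is structural. Both Gibbs conditions compare $H$ on a configuration with $H$ after deleting all points in a cube, not after adding a single point at the origin. So the plan is to write such a difference as a telescoping sum of single-point add one costs and control each term using Propositions~\ref{prop:deterministic-weakly-stab} and~\ref{prop:local-boundedness}, combined with translation invariance.

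\textbf{Weak stabilization.} Fix $l$ and let $\{A_n\}\subseteq\mathcal{Q}$ tend to $\RR^d$. Almost surely $P:=\mathcal{X}(\omega)$ is locally finite, so $P\cap Q_l=\{x_1,\dots,x_N\}$ is finite. For $n$ large we have $Q_l\subseteq A_n$, and then
\[
H(P\cap A_n)-H((P\cap A_n)\setminus Q_l)=\sum_{i=1}^{N}D_{x_i}H\bigl((P\cap A_n)\setminus\{x_1,\dots,x_i\}\bigr).
\]
By translation invariance, $D_{x_i}H(F)=D_0H(F-x_i)$. Apply Proposition~\ref{prop:deterministic-weakly-stab} to the locally finite set $P_i:=(P\setminus\{x_1,\dots,x_i\})-x_i$. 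This gives a finite $F_0^{(i)}\subseteq P_i$ beyond which the $i$th term equals a constant $\Delta(P_i)$. Since $A_n$ eventually contains $F_0^{(i)}+x_i$ for all $i$, the sum is eventually constant, so the limit exists almost surely.

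\textbf{Moment condition.} Use the same telescoping with $F=P\cap Q_n$ and the removed set $Q_m+z$. The number of terms is $\#(\mathcal{X}\cap Q_{z,m})$. Proposition~\ref{prop:local-boundedness}, applied after translating by $x_i$, bounds each term by $C(1+\#(\mathcal{X}\cap B_{x_i}(R)))^{M}$, where $M$ denotes the exponent $m$ produced by that proposition (renamed here to avoid a clash with the cube side $m$). Since $x_i\in Q_{z,m}$, we have $B_{x_i}(R)\subseteq Q_{z,m+2R}$, so
\[
|H_n(\mathcal{X})-H_n(\mathcal{X}\setminus Q_{z,m})|\le C\,\bigl(1+\#(\mathcal{X}\cap Q_{z,m+2R})\bigr)^{M+1}.
\]
The bound $\kappa\le\alpha_0$ lets $\mathcal{X}$ be stochastically dominated by a Poisson process of intensity $\alpha_0$, for instance via the standard thinning or dependent-thinning construction in \cite{HOS25}. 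The count is an increasing functional, so the fifth moment of the right-hand side is at most the corresponding Poisson moment. That moment is $O(m^{5d(M+1)})$, uniformly in $n$ and $z$, which gives \eqref{eq:gibbs-moment-condition}.

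\textbf{Main obstacle.} The step needing most care is justifying the stochastic domination and the monotonicity of the bound. The right-hand side is an increasing function of the configuration, so domination applies, but I would check that the domination result in \cite{HOS25} (or \cite{Der19}) applies under \eqref{eq:gibbs-process-assumption}. A secondary point is that the order of point removal in the telescoping sum is arbitrary but must be fixed for each configuration. Both Propositions~\ref{prop:deterministic-weakly-stab} and~\ref{prop:local-boundedness} hold for every locally finite set, so any fixed order works.
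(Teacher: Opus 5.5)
Your proof is correct, and its deterministic inputs are the same as the paper's: translation invariance plus Propositions~\ref{prop:deterministic-weakly-stab} and~\ref{prop:local-boundedness}. The difference lies in how the cube-removal conditions are reduced to single-point add one costs.

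The paper never telescopes. It checks only the two single-point criteria of \cite{HOS25}:
\begin{itemize}
\item the limit $\lim_n D_0\beta_q^{r,s}(\cA)(P_{A_n})$ exists for every locally finite $P$ (\cite[Equation~(12)]{HOS25});
\item $|D_y\beta_q^{r,s}(\cA)(F)|\le c\exp(c\,\#(F\cap B_y(R)))$ (\cite[Equation~(11)]{HOS25}), obtained by translating the polynomial bound to $y$ and weakening it to an exponential one.
\end{itemize}
It then cites \cite[Lemmas~34 and~33]{HOS25} to obtain \eqref{eq:gibbs-weak-stabilization} and \eqref{eq:gibbs-moment-condition}.

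You telescope over the finitely many points of $\mathcal X\cap Q_l$, resp.\ $\mathcal X\cap Q_{z,m}$, and translate each add one cost to the origin. In effect this proves those two lemmas directly in the present setting.

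Your route gains transparency in two places. First, it shows that \eqref{eq:gibbs-weak-stabilization} holds deterministically for every locally finite configuration, with no probabilistic input at all. Second, it shows that the moment condition needs only two things: stochastic domination of $\mathcal X$ by a Poisson process of intensity $\alpha_0$, and polynomial moments of a Poisson count. This yields an explicit exponent $p=5d(M+1)$.

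The domination is standard under $\kappa\le\alpha_0$ alone. One way is to apply the Georgii--K\"uneth comparison to the conditional law in the bounded window $Q_{z,m+2R}$ and then average over boundary conditions. Another is the thinning construction underlying \cite{HOS25}. So the point you flag as the main obstacle does go through.

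The cost of your route is that you must import this domination result yourself. The paper's route is shorter because it delegates all the probabilistic work, including the coupling, to \cite{HOS25}.
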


\begin{proof}
By Lemma~\ref{lem:gibbs-clt}, it suffices to show that the functional \(\beta_q^{r,s}(\cA )\) is translation invariant and weakly stabilizing with respect to $\mathcal{X}$ \eqref{eq:gibbs-weak-stabilization} and satisfies the Gibbs moment condition \eqref{eq:gibbs-moment-condition}.

The translation invariance of $\cA$ implies that $\beta_{q}^{r,s}(\cA)$ is translation invariant.

We next show that the functional is weakly stabilizing with respect to
$\mathcal{X}$. 
Let $P\subseteq \RR^d$ be locally finite. Let
$\{A_n \in \mathcal{Q}\}_{n\geq 1}$ be any sequence tending to $\RR^d$.
By Proposition~\ref{prop:deterministic-weakly-stab}, there exist
$\Delta(P)\in\mathbb{Z}$ and $F_{0}\in \mathcal{F}(P) $ such
that
$
    D_{0}\beta_{q}^{r,s}(\cA)(F)= \Delta(P)
$
whenever $F_{0}\subseteq F\subseteq P$ and $F$ is finite.
Since $F_{0}$ is finite, there exists $N\in\mathbb{N}$ such that
$
    F_{0}\subseteq A_n
$
for every $n\geq N$. Consequently,
$
    D_{0}\beta_{q}^{r,s}(\cA)(P_{A_n})= \Delta(P)
$
for every $n \geq N$. In particular,
$
    \lim_{n\to\infty}
    D_{0}\beta_{q}^{r,s}(\cA)(P_{A_n})
$
exists for every locally finite $P$. Thus, the condition \cite[Equation~(12)]{HOS25} holds. Then, \cite[Lemma~34]{HOS25} implies
that the functional is weakly stabilizing.

Finally, we show that $\beta_{q}^{r,s}(\cA)$  satisfies the Gibbs moment condition.  By Proposition~\ref{prop:local-boundedness}, there exist constants $R,C\geq 0$ and $m\in\mathbb{Z}_{\geq 0}$ such that
$
    |D_{0}\beta_{q}^{r,s}(\cA)(F)|
    \leq
    C(1+\#(F\cap B_{0}(R)))^{m}
$
for every $F \in \cU$.  Thus, for every $z\in\mathbb{R}^{d}$,
$
    D_{z}\beta_{q}^{r,s}(\cA)(F)=D_{0}\beta_{q}^{r,s}(\cA)(F-z),
$
and therefore
$
    |D_{z}\beta_{q}^{r,s}(\cA)(F)|
    \leq
    C\bigl(1+\#(F\cap B_{z}(R))\bigr)^{m}.
$
After increasing the constant if necessary, we obtain constants
$c,R>0$ such that
$$
    |D_{y}\beta_{q}^{r,s}(\cA)(F)|
    \leq
    c
    \exp\left(
      c
      \#(F\cap B_{y}(R))
    \right).
$$
Thus, 
\cite[Equation~(11)]{HOS25} holds. It follows from
\cite[Lemma~33]{HOS25} that $\beta_{q}^{r,s}(\cA)$ satisfies the Gibbs moment condition.
\end{proof}

%% file: 6_Examples.tex
\section{Applications to  persistent Betti numbers}\label{sec:example}

In this section, we apply Theorem~\ref{thm:clt_main} to derive central limit theorems for persistent Betti number functionals. By Theorem~\ref{thm:gibbs-clt-main}, the same results hold for Gibbs point processes whose Papangelou intensities satisfy \eqref{eq:gibbs-process-assumption}, when the observation windows are the cubes \(Q_n\).

We first recover the central limit theorem for persistent Betti numbers arising from simplicial complex filtrations established in \cite[Theorem~5.2]{HST18}. In the case of the Čech filtration, the corresponding Gibbs-process result recovers \cite[Corollary~6]{HOS25}. We then prove a central limit theorem for persistent Betti numbers arising from \(\ell^p\)-Vietoris-Rips simplicial homology. Finally, we prove central limit theorems for persistent Betti numbers associated with blurred and persistent magnitude homology of Poisson random geometric graphs. As a special case, we obtain a central limit theorem for the magnitude Betti numbers of Poisson random geometric graphs.

All persistent Betti number functionals considered below are measurable, and we omit separate verifications of this fact.



\subsection{Persistent homology of filtrations}
\label{subsec:persistent-homology-filtrations}

In this subsection, we recover from our framework the central limit
theorem for persistent Betti numbers established in
\cite[Theorem~5.2]{HST18}. We first recall the setting.

 Let
$
\kappa\colon\cU\rightarrow[0,\infty]
$
be a measurable map, and let
$
\rho\colon[0,\infty]\rightarrow[0,\infty]
$
be an increasing function. We call \(\kappa\) a \emph{filtration function} if it satisfies the following conditions:
\begin{enumerate}
    \item[\textnormal{(K1)}] For every \(\sigma,\tau\in\cU\), if
    \(\sigma\subseteq\tau\), then
    $
    0\leq\kappa(\sigma)\leq\kappa(\tau).
    $

    \item[\textnormal{(K2)}] The function \(\kappa\) is translation invariant; that is,
    $
    \kappa(\sigma+x)=\kappa(\sigma)
    $
    for every \(\sigma\in\cU\) and \(x\in\mathbb{R}^d\).

    \item[\textnormal{(K3)}] If \(t<\infty\), then \(\rho(t)<\infty\), and
    $
    \lVert x-y\rVert
    \leq
    \rho(\kappa(\{x,y\}))
    $
    for every \(x,y\in\mathbb{R}^d\).
\end{enumerate}
We further set
$
\kappa(\emptyset):=0
$ and \(\kappa(\{x\})=0\) for every \(x\in\mathbb R^d\).

For \(F\in\cU\) and
\(t\in[0,\infty]\), define
$
K_\kappa(F)(t)
:=
\left\{
\sigma\subseteq F
\;\middle|\;
\kappa(\sigma)\leq t
\right\}.
$
Condition \textnormal{(K1)} ensures that \(K_\kappa(F)(t)\) is a simplicial complex. Moreover,
$
K_\kappa(F)(t)\subseteq K_\kappa(F)(t')
$
whenever
$
t\leq t',
$
and hence \(K_\kappa(F)\) defines a filtration of simplicial complexes, called the \emph{\(\kappa\)-filtration}. This class includes the filtrations of \v{C}ech and Vietoris-Rips complexes (\cite[Example 1.3]{HST18}).
Condition \textnormal{(K2)} implies that
$
\kappa(\{x\})=\kappa(\{y\})
$
for every \(x,y\in\mathbb{R}^d\). In what follows, we set
$
\kappa(\{x\})=0
$
for every
$
x\in\mathbb{R}^d.
$

We note the following immediate consequence of conditions \textnormal{(K1)} and \textnormal{(K3)}. Let \(F\in\cU\), \(t\in[0,\infty)\), and \(\sigma\in K_\kappa(F)(t)\). If \(0\in\sigma\), then, for every \(x\in\sigma\),
$
\lVert x\rVert
\leq
\rho(\kappa(\{0,x\}))
\leq
\rho(\kappa(\sigma))
\leq
\rho(t).
$
Consequently,
\begin{equation}
\label{eq:inball}
0\in\sigma
\ \text{and}\
\sigma\in K_\kappa(F)(t)
\quad\Longrightarrow\quad
\sigma\subseteq B_0(\rho(t)).
\end{equation}

We extend the associated filtered chain complex to an
\(\mathbb{R}\)-indexed filtered chain complex by declaring it to be the zero complex for \(t<0\). We continue to use the notation \(K_\kappa(F)\) for this extension. This defines a functor
\[
\mathcal{A}_\kappa\colon
\cU
\rightarrow
\Fun
(
\mathbb{R},
\Chnn(\mathsf{vect}_k)
),
\qquad
F\mapsto C(K_\kappa(F)).
\]

\begin{proposition}
\label{prop:PH}
The functor \(\mathcal{A}_\kappa\) is pointwise admissible.
\end{proposition}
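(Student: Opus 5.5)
The plan is to verify the three conditions of Definition~\ref{def:admi} directly from the explicit form of the add one map. First, note that the add one map \(\add_{F,t}\colon C(K_\kappa(F)(t))\to C(K_\kappa(F^+)(t))\) is induced by the inclusion of simplicial complexes \(K_\kappa(F)(t)\subseteq K_\kappa(F^+)(t)\). Indeed, a simplex \(\sigma\subseteq F\) satisfies \(\kappa(\sigma)\le t\) regardless of whether it is regarded in \(F\) or \(F^+\). Consequently \(\add_{F,t}\) is degreewise injective, so \(\Ker(\add_{F,t})=0\), and its cokernel in degree \(j\) has as basis the \(j\)-simplices \(\sigma\in K_\kappa(F^+)(t)\) with \(0\in\sigma\), assuming \(0\notin F\) as allowed by Remark~\ref{rem:origin-already-present}. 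For \(t<0\) everything is zero, so we restrict to \(t\ge 0\).

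\textbf{Translation invariance.} By (K2), translation by \(x\) gives an isomorphism of filtered simplicial complexes \(K_\kappa(F)\cong K_\kappa(F+x)\), natural in \(t\). Hence \(\cA_\kappa(F+x)\cong\cA_\kappa(F)\).

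\textbf{Boundedness.} By \eqref{eq:inball}, every simplex in the cokernel basis is contained in \(B_0(\rho(t))\). Such a simplex is therefore \(\{0\}\cup\tau\) with \(\tau\subseteq F\cap B_0(\rho(t))\) and \(\#\tau=j\). This gives
\[
\dim_k\Coker((\add_{F,t})_j)\le\binom{\#(F\cap B_0(\rho(t)))}{j}\le(1+\#(F\cap B_0(\rho(t))))^{j}
\]
for \(j\ge0\), and zero for \(j<0\). Take \(\rho\) from (K3), extended by \(0\) on negative \(t\), together with \(c\equiv1\) and \(\tau(j,t)=\max(j,0)\).

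\textbf{Stabilization.} Fix a locally finite \(P\) with \(0\notin P\). The kernel is identically zero, so it trivially stabilizes. For the cokernel, set \(F_0:=P\cap B_0(\rho(t))\), which is finite because \(P\) is locally finite. For \(F_0\subseteq F\subseteq G\) in \(\mathcal{F}(P)\), the structure map \(\Coker(\add_{P,t,F})_j\to\Coker(\add_{P,t,G})_j\) sends basis simplices to basis simplices. By \eqref{eq:inball} both bases equal the set of \(j\)-simplices \(\sigma\subseteq F_0^+\) with \(0\in\sigma\) and \(\kappa(\sigma)\le t\), which depends only on \(F_0\). The map is therefore the identity on the same basis, hence an isomorphism. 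The same \(F_0\) works for every degree \(q\).

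The main subtlety is checking that the induced map on cokernels really is the basis-preserving map described above. This follows because the quotient of simplicial chains by a subcomplex's chains is spanned by the remaining simplices, compatibly with inclusions. Beyond that the argument is routine bookkeeping.
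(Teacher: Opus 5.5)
Your proposal is correct and follows essentially the same route as the paper: injectivity of the add one map disposes of the kernel, and \eqref{eq:inball} confines every cokernel generator to \(B_0(\rho(t))\), which yields both cokernel stabilization with \(F_0=P\cap B_0(\rho(t))\) and the polynomial bound. The only differences are cosmetic: you use the sharper count \(\binom{\#(F\cap B_0(\rho(t)))}{j}\le(1+\#(F\cap B_0(\rho(t))))^{j}\) instead of the paper's exponent \(q+1\), and you make the choice of \(\rho\), \(c\), \(\tau\) explicit.
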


\begin{proof}
Condition \textnormal{(K2)} implies that
\(\mathcal{A}_\kappa\) is translation invariant.

We next prove add one kernel-cokernel stabilization. For \(t<0\),
the chain complex \(\mathcal{A}_\kappa(F)(t)\) is the zero complex, so
the assertion is trivial. Let \(t\geq 0\), and fix  \(P\subseteq\mathbb{R}^d\). We can assume that $0 \notin P$ by Remark~\ref{rem:origin-already-present}.
For every \(F\in\mathcal{F}(P) \), the add one 
map
$
u_{P,F,t}^+\colon
\mathcal{A}_\kappa(F)(t)
\rightarrow
\mathcal{A}_\kappa(F^+)(t)
$
is injective. Hence, \(u_{P,t}^+\) satisfies kernel stabilization.
For each \(q\in\NN\), we have
\[
\Coker(u_{P,F,t}^+)_q
\cong
\left\langle
\left\{
\tau\subseteq F^+
\;\middle|\;
0\in\tau,\ 
\#\tau=q+1,\ 
\kappa(\tau)\leq t
\right\}
\right\rangle_k.
\]
By~\eqref{eq:inball}, every simplex generating the right-hand side is
contained in \(B_0(\rho(t))\). Set
$
F_0:=P\cap B_0(\rho(t)).
$
 For every
\(F,F'\in\mathcal{F}(P) \) satisfying
$
F_0\subseteq F\subseteq F',
$
the induced map
$
\Coker(u_{P,F,t}^+)_q
\rightarrow
\Coker(u_{P,F',t}^+)_q
$
is an isomorphism because their generators coincide.
Thus, \(u_{P,t}^+\) satisfies cokernel stabilization.

Finally, we prove add one kernel-cokernel boundedness. Let
\(0 \notin F\in\cU\), \(q\in\NNn\), and \(t\in\mathbb{R}\). 
The
degree-\(q\) component of the add one map
$
(\add_{F,t})_q\colon
C(K_\kappa(F)(t))_q
\rightarrow
C(K_\kappa(F^+)(t))_q
$
is injective, and hence
$
\dim_k\Ker((\add_{F,t})_q)=0.
$
If \(t\geq0\), then~\eqref{eq:inball} gives
\[
\begin{aligned}
\dim_k\Coker
((\add_{F,t})_q)
&=
\#\left\{
\tau\in K_\kappa(F^+)(t)
\;\middle|\;
0\in\tau,\ \#\tau=q+1,\ \kappa(\tau) \leq t,\ \tau \subseteq F^+ 
\right\}
\\
&\leq
\left(
1+
\#(F\cap B_0(\rho(t)))
\right)^{q+1}.
\end{aligned}
\]
For \(t<0\), the corresponding kernel and cokernel vanish.
Therefore, \(\mathcal{A}_\kappa\) satisfies add one kernel-cokernel
boundedness.
\end{proof}

The following result recovers \cite[Theorem~5.2]{HST18}.

\begin{corollary}
Fix \(q\in\NN\) and \(0\leq r\leq s<\infty\).
Let \(\{W_n\}_{n\in\mathbb{N}}\) be a sequence satisfying conditions \textnormal{(A1)}--\textnormal{(A4)}, and let \(\mathcal{P}\) be a homogeneous Poisson point process of unit intensity on \(\mathbb{R}^d\). Then there exists a constant
$
\sigma^2\in[0,\infty)
$
such that, as $n \to \infty$,
$
n^{-1/2}(
\beta_q^{r,s}(\mathcal{A}_\kappa)(\mathcal{P}_{W_n})
-
\mathbb{E}\!\left[\beta_q^{r,s}(\mathcal{A}_\kappa)(\mathcal{P}_{W_n})\right])
\xrightarrow{d}
\mathcal{N}(0,\sigma^2).
$
\end{corollary}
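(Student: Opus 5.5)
This corollary should follow by combining Proposition~\ref{prop:PH} with Theorem~\ref{thm:clt_main}. By Proposition~\ref{prop:PH}, the functor \(\mathcal{A}_\kappa\) is pointwise admissible, so in particular it is admissible at \(r\) and at \(s\). The remaining hypothesis of Theorem~\ref{thm:clt_main} is measurability of \(\beta_q^{r,s}(\mathcal{A}_\kappa)\), which the paper takes as given for all functionals in this section. Once that is in place, Theorem~\ref{thm:clt_main} applies to the given window sequence \(\{W_n\}\) satisfying (A1)--(A4) and yields both the variance asymptotics and the stated convergence in distribution.

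The only point that deserves a short justification is measurability, since the argument otherwise rests on the paper's blanket remark. I would proceed as follows. The value \(\beta_q^{r,s}(\mathcal{A}_\kappa)(F)\) is the rank of a linear map between finite-dimensional spaces. That map is determined combinatorially by the two families of sets \(\{\sigma\subseteq F:\kappa(\sigma)\le r\}\) and \(\{\sigma\subseteq F:\kappa(\sigma)\le s\}\).

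Restrict to configurations of fixed cardinality \(N\) and use an ordered labelling \((x_1,\dots,x_N)\). For each index subset \(I\subseteq\{1,\dots,N\}\), the indicator of \(\kappa(\{x_i\}_{i\in I})\le t\) is a measurable function of the labelled point, because \(\kappa\) is measurable. The functional is then a function of finitely many such indicators, so it is measurable on each stratum. It is also symmetric in the labelling, so it descends to a measurable function on \(\cU\) with the \(\sigma\)-algebra induced from \(\lf\).

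I expect no genuine obstacle here, because all the substantive work sits in Proposition~\ref{prop:PH} and Theorem~\ref{thm:clt_main}. The one place needing care is checking that the stratified-by-cardinality \(\sigma\)-algebra agrees with the one induced from \(\lf\). This is standard, and I would cite it rather than reprove it.
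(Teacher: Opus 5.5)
Your proposal is correct and matches the paper's proof, which obtains the corollary directly from Proposition~\ref{prop:PH} and Theorem~\ref{thm:clt_main}. Your stratified-by-cardinality measurability sketch is a reasonable addition: the paper does not verify measurability and simply assumes it for all functionals in that section.
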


\begin{proof}
The assertion follows from Proposition~\ref{prop:PH} and Theorem~\ref{thm:clt_main}.
\end{proof}

\subsection{\(\ell_p\)-Vietoris-Rips}

In this subsection, we consider persistent Betti number
functionals associated with the \(\ell_p\)-Vietoris-Rips simplicial
sets studied in \cite{Sim20,IX26}. For \(p=1\), this construction yields
blurred magnitude homology \cite[Section~2.3]{IX26}, whereas for
\(p=\infty\), it gives persistent homology isomorphic to
the standard Vietoris-Rips persistent homology
\cite[Theorem~18(2)(b)]{Sim20}.

\subsubsection{\texorpdfstring{$\ell_p$}{lp}-Vietoris-Rips simplicial homology}
We recall $\ell_p$-Vietoris-Rips simplicial homology; see \cite{IX26} for details.
Let $(X,d_X)$ be an extended metric space, and let
$p\in[1,\infty]$. For $q\in \NN$ and a tuple
$
\sigma=(x_0,\ldots,x_q)\in X^{q+1},
$
 its \emph{$\ell_p$-weight} $w_{X,p}=w_p$ is given by
\begin{equation}
\label{eq:lp-weight}
w_p(\sigma)
=
\max_{0\leq i_0<\cdots<i_m\leq q}
\left\|
(
d_X(x_{i_0},x_{i_1}),
\ldots,
d_X(x_{i_{m-1}},x_{i_m})
)
\right\|_p,
\end{equation}
where $\lVert-\rVert_p$ denotes the
$\ell_p$-norm and maximum is taken over all
strictly increasing subsequences $i_0 < \cdots < i_m$ of $0,\ldots, q$. Here,
\[
w_\infty(\sigma)
=
\operatorname{diam}\{x_0,\ldots,x_q\},
\qquad
w_1(\sigma)
=
d_X(x_0,x_1)+\cdots+d_X(x_{q-1},x_q).
\]

For $r\geq 0$, define the \emph{$\ell_p$-Vietoris-Rips simplicial set}
$\operatorname{VR}_{\leq r}^p(X)$ by
\[
\operatorname{VR}_{\leq r}^p(X)_q
:=
\left\{
(x_0,\ldots,x_q)\in X^{q+1}
\;\middle|\;
w_p(x_0,\ldots,x_q)\leq r
\right\}.
\]
The face maps are given by deleting entries, and the degeneracy maps
are given by repeating entries.
Set
$
C_{\leq r}^p(X)
:=
N(\operatorname{VR}_{\leq r}^p(X)),
$
where $N(S)$ denotes the normalized chain complex of a simplicial set. In particular,
$
C_{q,\leq r}^p(X)
=
N_q(\operatorname{VR}_{\leq r}^p(X))
$
is the $k$-vector space generated by $q$-simplices
$(x_0,\ldots,x_q)$ satisfying
$
w_p(x_0,\ldots,x_q)\leq r
$ with
$
x_i\neq x_{i+1}$ 
for every $0\leq i<q.
$
The boundary operator
$
\partial_q\colon
C_{q,\leq r}^p(X)
\rightarrow
C_{q-1,\leq r}^p(X)
$
is given by
\[
\partial_q(x_0,\ldots,x_q)
=
\sum_{i=0}^q
(-1)^i
(x_0,\ldots,\widehat{x_i},\ldots,x_q).
\]

For $r\leq s$, we have
$
\operatorname{VR}_{\leq r}^p(X)
\subseteq
\operatorname{VR}_{\leq s}^p(X).
$
Extending $C_{\leq r}^p(X)$ by the zero complex for $r<0$, we therefore
obtain a functor
$
C_{\leq -}^p(X)
\in
\Fun
(
\mathbb{R},
\Chnn(\Vect_k)
).
$
If $X$ is finite, then each chain group is finite-dimensional.
Replacing $\leq$ by $<$ in the above construction yields
$C_{<-}^p(X)$. For each $r\in\mathbb{R}$, define
$
C_r^p(X)
:=
\Coker
(
C_{<r}^p(X)
\rightarrow
C_{\leq r}^p(X)
).
$
This induces the canonical isomorphism

\begin{equation}\label{eq:magnitude-construction}
C_-^p(X) \cong 
\Coker
(
C_{<-}^p(X)
\rightarrow
C_{\leq -}^p(X)
).    
\end{equation}

Let $\sigma=(x_0,\ldots,x_q)$ be a $q$-simplex of
$\operatorname{VR}_{\leq r}^p(X)$. Then we have
$
d_X(x_i,x_j)\leq r
$ for $
0\leq i,j\leq q.
$
Indeed,
$
d_X(x_i,x_j)
\leq
\operatorname{diam}\{x_0,\ldots,x_q\}
=
w_\infty(\sigma)
\leq
w_p(\sigma)
\leq
r.
$
Consequently, for every $0\leq i\leq q$, we have
\begin{equation}
\label{eq:lp}
\{x_0,\ldots,x_q\}
\subseteq
B_{x_i}(r).
\end{equation}

\subsubsection{CLTs for \texorpdfstring{$\ell_p$}{lp}-Vietoris-Rips simplicial homology}

We regard each \(F\in\cU\) as a finite metric space equipped with the metric induced by the Euclidean metric on \(\mathbb{R}^d\). By setting the corresponding chain complex to be the zero complex at every negative parameter, we obtain a functor
\[
\mathcal{A}^p
\colon
\cU
\rightarrow
\Fun
(
\mathbb{R},
\Chnn(\vect_k)
),
\qquad
F
\mapsto
C_{\leq -}^p(F).
\]

\begin{proposition}
\label{prop:lp}
Fix \(p\in[1,\infty]\). Then \(\mathcal{A}^p\) is pointwise admissible.
\end{proposition}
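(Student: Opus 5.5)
We follow the proof of Proposition~\ref{prop:PH}, verifying the three conditions of Definition~\ref{def:admi} for every \(t\in\RR\). For translation invariance: \(x\mapsto x+y\) is an isometry \(F\to F+y\). Hence it induces an isomorphism of simplicial sets \(\operatorname{VR}^p_{\leq t}(F)\cong\operatorname{VR}^p_{\leq t}(F+y)\), natural in \(t\). This gives \(\mathcal{A}^p(F)\cong\mathcal{A}^p(F+y)\) in \(\Fun(\RR,\Chnn(\vect_k))\). For \(t<0\) all complexes vanish, so we may assume \(t\geq 0\). By Remark~\ref{rem:origin-already-present} we may also assume \(0\notin F\) and \(0\notin P\).

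The key observation concerns the metric. The metric on \(F\) is the restriction of the Euclidean metric, so adding the origin does not change distances between points of \(F\). Consequently a tuple \((x_0,\ldots,x_q)\) with entries in \(F\) has the same \(\ell_p\)-weight whether it is computed in \(F\) or in \(F^+\). The degree-\(q\) add one map
\[
(\add_{F,t})_q\colon C^p_{q,\leq t}(F)\to C^p_{q,\leq t}(F^+)
\]
therefore sends the basis of normalized nondegenerate tuples injectively onto a subset of the basis. So the map is injective, and kernel stabilization and kernel boundedness are trivial. Its cokernel has as basis the tuples \((x_0,\ldots,x_q)\) satisfying three conditions:
\begin{itemize}
\item[(i)] all \(x_i\in F^+\);
\item[(ii)] \(x_i\neq x_{i+1}\);
\item[(iii)] \(w_p\leq t\) and at least one entry equals \(0\).
\end{itemize}
By \eqref{eq:lp}, applied with \(x_i=0\), every such tuple has all entries in \(B_0(t)\).

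For cokernel stabilization at \(t\), fix a locally finite \(P\) and set \(F_0:=P\cap B_0(t)\), which is finite. Take \(F_0\subseteq F\subseteq F'\) in \(\mathcal F(P)\). The cokernel bases for \(F\) and \(F'\) consist of the same tuples, because all entries lie in \((F'\cup\{0\})\cap B_0(t)=F_0\cup\{0\}\) and weights are intrinsic. The structure map sends basis elements to basis elements, so it is an isomorphism in every degree. For boundedness we count tuples of length \(q+1\) with entries in \((F\cap B_0(t))\cup\{0\}\). This gives
\[
\dim_k\Coker((\add_{F,t})_q)\leq\bigl(1+\#(F\cap B_0(t))\bigr)^{q+1}.
\]
So we may take \(\rho(t)=\max(t,0)\), \(\tau(q,t)=q+1\) and \(c=1\).

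The only delicate point is the injectivity and basis description of the add one map on normalized chains. We must check that nondegeneracy, meaning consecutive entries distinct, and the weight are both unchanged under the inclusion \(F\subseteq F^+\). We also must make sure that the normalized complex is indeed spanned by exactly these tuples, as stated in the paper's description of \(C^p_{q,\leq r}\). Once this is confirmed, everything reduces to the counting above. The case \(p=\infty\) needs no separate treatment, since \eqref{eq:lp} holds uniformly in \(p\).
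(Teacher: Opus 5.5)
Your proof is correct and is essentially the paper's argument. The paper uses \eqref{eq:lp} to note that every simplex of \(\operatorname{VR}^p_{\leq t}(F^+)\) containing the origin lies in \(B_0(t)\), and then reruns the proof of Proposition~\ref{prop:PH}; you carry this out in detail (injectivity because Euclidean distances on \(F\) are unchanged in \(F^+\), a cokernel basis of origin-containing nondegenerate tuples, \(F_0=P\cap B_0(t)\), and the bound \((1+\#(F\cap B_0(t)))^{q+1}\)), with only the cosmetic fix \(\tau(j,t)=\max(j+1,0)\) needed so that \(\tau\) takes values in \(\NN\), which is harmless since all complexes vanish in negative degrees.
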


\begin{proof}
The translation invariance of \(\mathcal{A}^p\) follows from that of the Euclidean metric.
 Let \(t\geq 0\). By Equation~\eqref{eq:lp}, every vertex of a simplex in
\(\operatorname{VR}_{\leq t}^p(F^+)\) containing the origin lies in
\(B_0(t)\). Therefore, the same argument as in
Proposition~\ref{prop:PH} shows that \(\mathcal{A}^p\) satisfies pointwise add one kernel-cokernel stabilization and add one kernel-cokernel boundedness.
\end{proof}

\begin{corollary}
\label{cor:clt-lp}
Fix \(q\in\NN\), \(0\leq r\leq s\), and
\(p\in[1,\infty]\).
Let \(\{W_n\}_{n\in\mathbb{N}}\) be a sequence satisfying conditions
\textnormal{(A1)}--\textnormal{(A4)}, and let \(\mathcal{P}\) be a homogeneous Poisson point process of unit intensity on \(\mathbb{R}^d\). Then there exists a constant
$
\sigma^2\in[0,\infty)
$
such that as $n \to \infty$, 
$ n^{-1}
\Var [
\beta_q^{r,s}(\mathcal{A}^p) (\mathcal{P}_{W_n})
]
\to
\sigma^2
$
and 
$
n^{-1/2}
(
 \beta_q^{r,s}(\mathcal{A}^p)(\mathcal{P}_{W_n})
-
\mathbb{E} [\beta_q^{r,s}(\mathcal{A}^p) (\mathcal{P}_{W_n})]
)
\xrightarrow{d}
\mathcal{N}(0,\sigma^2).
$
\end{corollary}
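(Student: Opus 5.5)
This corollary should follow directly from Theorem~\ref{thm:clt_main}, applied with $\cA=\mathcal{A}^p$. We need to check three hypotheses: admissibility at $r$ and $s$, measurability of $\beta_q^{r,s}(\mathcal{A}^p)$, and conditions \textnormal{(A1)}--\textnormal{(A4)} on $\{W_n\}$. The last is assumed in the statement. Measurability was stated at the beginning of this section for all functionals considered here. So the substance lies in admissibility, which Proposition~\ref{prop:lp} provides. Since $\mathcal{A}^p$ is pointwise admissible, it is in particular admissible at $r$ and at $s$. Theorem~\ref{thm:clt_main} then yields both the variance asymptotics and the Gaussian limit.

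If I wanted the argument to be self-contained, I would expand the proof of Proposition~\ref{prop:lp}, as it is the only step with real content.

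\emph{Kernel stabilization and boundedness.} Fix $t\ge 0$ and $F\in\cU$ with $0\notin F$; this restriction is justified by Remark~\ref{rem:origin-already-present}. The add one map $C^p_{\le t}(F)\to C^p_{\le t}(F^+)$ is injective. The reason is that the $\ell_p$-weight of a tuple in $F$ depends only on distances within $F$, and these distances are unchanged in $F^+$. Hence all kernels vanish.

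\emph{Cokernel stabilization and boundedness.} In degree $j$, the cokernel is spanned by the nondegenerate tuples $(x_0,\dots,x_j)$ in $F^+$ with $w_p\le t$ in which $0$ occurs. By \eqref{eq:lp}, every entry of such a tuple lies in $B_0(t)$. This gives the bound $(1+\#(F\cap B_0(t)))^{j+1}$, so we may take $\rho(t)=t$, $\tau(j,t)=j+1$, and $c\equiv 1$. For stabilization, fix a locally finite $P$ and set $F_0:=P\cap B_0(t)$, which is finite. For $F_0\subseteq F\subseteq F'$, the transition maps between the cokernels are bijections on generators, hence isomorphisms. For $t<0$ everything is zero. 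Translation invariance holds because the construction depends only on Euclidean distances.

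The only subtle point, and the one I would verify carefully, is the identification of the cokernel in each degree with the span of tuples containing $0$. This requires the normalized chain complex of the simplicial set to have as basis the tuples with consecutive entries distinct, and requires the inclusion $\operatorname{VR}^p_{\le t}(F)\subseteq \operatorname{VR}^p_{\le t}(F^+)$ to send basis elements to basis elements. Both facts follow from the explicit description of $C^p_{q,\le r}$ given before the corollary. With this in hand, the corollary is immediate.
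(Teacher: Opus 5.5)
Your proposal is correct and follows the paper's route: the paper also proves the corollary by combining Proposition~\ref{prop:lp} (pointwise admissibility of $\mathcal{A}^p$, hence admissibility at $r$ and $s$) with Theorem~\ref{thm:clt_main}. Your expansion of Proposition~\ref{prop:lp} (injective add one maps, cokernels spanned by tuples containing $0$ whose entries lie in $B_0(t)$ by \eqref{eq:lp}, and $F_0=P\cap B_0(t)$) is the ``same argument as in Proposition~\ref{prop:PH}'' that the paper invokes. One cosmetic fix: take $\rho(t)=\max\{t,0\}$ and $\tau(j,t)=\max\{j+1,0\}$, so that the boundedness data have the codomains required by Definition~\ref{def:boundedness}.
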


\begin{proof}
The assertion follows from Proposition~\ref{prop:lp} and  Theorem~\ref{thm:clt_main}.
\end{proof}


\subsection{\texorpdfstring{$\ell_p$}{lp}-Vietoris-Rips simplicial sets of Poisson random geometric graphs}
\label{subsec:lp}

In this subsection, we establish a central limit theorem for persistent Betti numbers arising from the \(\ell_p\)-Vietoris-Rips homology of Poisson random geometric graphs.

Fix \(p\in[1,\infty]\). Let \(\mathsf{FinMet}_1\) denote the category
of finite extended metric spaces and \(1\)-Lipschitz maps. A \(1\)-Lipschitz map
$
f\colon (X,d_X)\rightarrow(Y,d_Y)
$
induces a morphism of \(\mathbb{R}\)-indexed chain complexes
$
C_{\leq -}^p(f)
\colon
C_{\leq -}^p(X)
\rightarrow
C_{\leq -}^p(Y).
$
In degree \(q\), it sends a simplex
\((x_0,\ldots,x_q)\) to
$
(f(x_0),\ldots,f(x_q))$.
Consequently, we obtain a functor
\[
C_{\leq -}^p
\colon
\mathsf{FinMet}_1
\rightarrow
\Fun
(
\mathbb{R},
\Chnn(\vect_k)
).
\]

Let \(F\in\cU\). For \(t\in\mathbb{R}\), let \(G(F,t)\) be the graph with vertex set \(F\), in which two distinct vertices \(x,y\in F\) are joined by an edge if and only if
$
\lVert x-y\rVert\leq t.
$
In particular, \(G(F,t)\) has no edges when \(t<0\). We equip \(G(F,t)\) with the shortest-path metric in which every edge has length \(1\), and define the distance between vertices belonging to distinct connected components to be \(\infty\). In this way, \(G(F,t)\) becomes a finite extended metric space.

For \(r\leq s\), the graph inclusion
$
G(F,r)\hookrightarrow G(F,s)
$
induces a \(1\)-Lipschitz map of extended metric spaces. Indeed, adding edges cannot increase shortest-path distances. We therefore obtain a functor
$
G(F,-)
\colon
\mathbb{R}
\rightarrow
\mathsf{FinMet}_1.
$
Moreover, for an inclusion \(F\subseteq F'\), the induced map
$
G(F,t)\rightarrow G(F',t)
$
is \(1\)-Lipschitz. Consequently, \(G\) defines a functor
\[
G
\colon
\cU\times\mathbb{R}
\rightarrow
\mathsf{FinMet}_1.
\]
We regard the first parameter as the scale parameter of the \(\ell_p\)-Vietoris-Rips simplicial set and the second as the connection radius of the graph. Composing the above functors yields a functor
$
\cM^p
\colon
\cU
\rightarrow
\Fun
(
\mathbb{R}^2,
\Chnn(\vect_k)
)
$
defined by
$
\cM^p(F)(a,t)
:=
C_{\leq a}^p(G(F,t)).
$
In particular, for each fixed \(t\in\mathbb{R}\), we obtain the two functors
\begin{equation}
\label{eq:lp-graph-functor2}
\cM_{(t,-)}^p
\colon
\cU
\rightarrow
\Fun
(
\mathbb{R},
\Chnn(\vect_k)
),
\qquad
F
\mapsto
C_{\leq t}^p(G(F,-))
\end{equation}
and
\begin{equation}
\label{eq:lp-graph-functor}
\cM_{(-,t)}^p
\colon
\cU
\rightarrow
\Fun
(
\mathbb{R},
\Chnn(\vect_k)
),
\qquad
F
\mapsto
C_{\leq -}^p(G(F,t)).
\end{equation}
These functors satisfy the following.
\begin{proposition}
\label{prop:lp-bmh}
For every \(t\geq 0\), the functors
$
\cM_{(t,-)}^p
$ and $
\cM_{(-,t)}^p
$
are pointwise admissible.
\end{proposition}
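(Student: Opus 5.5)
The plan is to follow the proof of Proposition~\ref{prop:PH}. The one new difficulty is that adding the origin can shorten graph distances between existing vertices, so the set of simplices in the cokernel now depends on a neighbourhood of the origin rather than only on simplices containing $0$. I treat both functors together. Write $a$ for the $\ell_p$-scale and $t$ for the connection radius; in $\cM^p_{(-,t)}$ the radius $t\ge 0$ is fixed and $a$ varies, and in $\cM^p_{(t,-)}$ the roles are swapped. If either parameter is negative, the complexes are zero or consist only of vertices, and the argument below applies with $R=0$.

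\emph{Translation invariance and injectivity.} Translation invariance is immediate, since $G(F+x,t)\cong G(F,t)$ as metric spaces. By Remark~\ref{rem:origin-already-present} we may assume $0\notin F$ and $0\notin P$. The inclusion $G(F,t)\hookrightarrow G(F^+,t)$ is $1$-Lipschitz, so it sends every generator $(x_0,\dots,x_q)$ with $x_i\neq x_{i+1}$ and weight $\le a$ to the same tuple, which is again a nondegenerate generator with weight $\le a$ in $G(F^+,t)$. Distinct generators go to distinct generators. Hence each add one map is degreewise injective, and kernel stabilization and the kernel part of the boundedness condition are trivial.

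\emph{Localization of the cokernel.} The cokernel in degree $q$ has as basis the nondegenerate tuples $\sigma$ in $F^+$ with $w_p^{G(F^+,t)}(\sigma)\le a$ such that either $0\in\sigma$ or $w_p^{G(F,t)}(\sigma)>a$. The key geometric claim is that every such $\sigma$ lies in $B_0(R)$ with $R:=2at$ (or $R=0$ if $a<0$ or $t<0$).
\begin{itemize}
\item Since $w_\infty\le w_p$, all pairwise graph distances in $\sigma$ are at most $a$. A graph path with at most $a$ hops has Euclidean length at most $at$.
\item If $0\in\sigma$, every vertex of $\sigma$ lies within Euclidean distance $at$ of $0$.
\item Otherwise some pair $x_i,x_j$ has a strictly smaller distance in $G(F^+,t)$ than in $G(F,t)$. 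The corresponding shortest path, of at most $a$ hops, passes through $0$, so $\|x_i\|\le at$. Every other vertex is then within $at$ of $x_i$, hence within $2at$ of $0$.
\end{itemize}

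\emph{Locality of weights (main obstacle).} The delicate point is that the weights in $G(F,t)$ of tuples inside $B_0(R)$ must not depend on points of $F$ far away. It suffices to know, for every pair of points in $B_0(R)$, whether their graph distance is $\le a$ and what its value is. Any path witnessing a distance $\le a$ stays inside $B_0(R+at)$. Whenever some relevant distance exceeds $a$, the weight already exceeds $a$, so its exact value does not matter. Therefore both membership in $\operatorname{VR}_{\le a}$ and the cokernel condition are determined by $F\cap B_0(3at)$.

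\emph{Conclusion.} Set $F_0:=P\cap B_0(3at)$, which is finite because $P$ is locally finite. For $F_0\subseteq F\subseteq F'$ in $\mathcal F(P)$, the cokernels of the add one maps at $F$ and $F'$ have the same generating tuples, so the induced map between them is an isomorphism. This gives cokernel stabilization at every parameter. For boundedness, the cokernel in degree $q$ is spanned by tuples of length $q+1$ in $F^+\cap B_0(R)$, so its dimension is at most $(1+\#(F\cap B_0(R)))^{q+1}$. This holds with $\rho=R$ as a function of the varying parameter, $\tau(q,\cdot)=q+1$, and $c=1$. Together these establish pointwise admissibility of both functors.
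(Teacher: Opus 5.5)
Your proposal is correct and follows essentially the same route as the paper: injectivity of the add one maps, localization of cokernel generators to $B_0(2at)$ via a shortest path through the origin, the choice $F_0=P\cap B_0(3at)$ for cokernel stabilization, and the generator count for boundedness. The differences are cosmetic. You treat both slices at once, including negative parameters, instead of reducing to $\cM^p_{(-,t)}$ via Proposition~\ref{prop:2para-equiv}. You also spell out the truncation argument showing that whether a weight is at most $a$ depends only on $F\cap B_0(3at)$.
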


\begin{proof}
Since the Euclidean metric is invariant under translations, both
\(\cM_{(t,-)}^p\) and \(\cM_{(-,t)}^p\) are translation invariant. By Proposition~\ref{prop:2para-equiv}, 
it suffices to prove the stabilization and boundedness assertions for
\(\cM_{(-,t)}^p\). We therefore set
$
\cM:=\cM_{(-,t)}^p.
$

We first prove pointwise add one kernel-cokernel stabilization. The assertion is immediate for \(r<0\), so assume that \(r\geq 0\). Let
\(P\subseteq\mathbb{R}^d\) be locally finite, and consider the add one map,
$
u_{P,r}^+
\colon
\cM_{P,r}
\rightarrow
\cM_{P,r}^+.
$
For every \(F\in\mathcal{F}(P) \), the map
$
u_{P,r,F}^+
\colon
C_{\leq r}^p(G(F,t))
\rightarrow
C_{\leq r}^p(G(F^+,t))
$
is injective. Hence \(u_{P,r}^+\) satisfies kernel stabilization.
We next prove cokernel stabilization. Let \(E\in\mathcal{F}(P) \) and \(q\in\NN\), and let
$
\sigma=(x_0,\ldots,x_q)
$
be a simplex representing a basis element of
\(\Coker(u_{P,r,E}^+)_q\). Then either \(\sigma\) contains the origin, or its \(\ell_p\)-weight is strictly greater than \(r\) in \(G(E,t)\) but at most \(r\) in \(G(E^+,t)\).
In the former case, the distance in \(G(E^+,t)\) from each vertex of \(\sigma\) to the origin is at most \(r\). In the latter case, the graph distance between some pair of vertices of \(\sigma\) is strictly decreased by adding the origin. In particular, a new shortest path between such a pair must pass through the origin, and hence the distance in \(G(E^+,t)\) from every vertex of \(\sigma\) to the origin is at most \(2r\). Since every edge has Euclidean length at most \(t\), in either case we have
\begin{equation}
\label{eq:lp-graph-inball}
\{x_0,\ldots,x_q\}
\subseteq
B_0(2tr).
\end{equation}
Set
$
F_0:=P\cap B_0(3tr),
$
which is finite by the local finiteness of \(P\). Every simplex representing a basis element of the cokernel has all its vertices in
\(B_0(2tr)\), and every graph path of length at most \(r\) starting at one of these vertices is contained in
\(B_0(3tr)\). Thus, both before and after adding the origin, whether the \(\ell_p\)-weight of such a simplex is at most \(r\) is determined entirely by \(F_0\).
Consequently, for any
$
F_0\subseteq F'\subseteq F''
$ with $
F',F''\in\mathcal{F}(P) ,
$
the chain map
$
\Coker(u_{P,r,F'}^+)
\rightarrow
\Coker(u_{P,r,F''}^+)
$
is an isomorphism in every degree and hence an isomorphism of chain complexes. Therefore, \(u_{P,r}^+\) satisfies cokernel stabilization. It follows that \(\cM\) satisfies pointwise add one kernel-cokernel stabilization.

Finally, using Equation~\eqref{eq:lp-graph-inball}, the same counting argument for generators as in Proposition~\ref{prop:PH} shows that \(\cM\) satisfies add one kernel-cokernel boundedness.
\end{proof}

\begin{corollary}
\label{cor:clt-lp-graph}
Fix \(t\geq 0\), \(q\in\NN\), \(0\leq r\leq s\), and
\(p\in[1,\infty]\). Let
$
\cM
=
\cM_{(-,t)}^p
$ or $
\cM
=
\cM_{(t,-)}^p.
$
Let \(\{W_n\}_{n\in\mathbb{N}}\) be a sequence satisfying conditions
\textup{(A1)}--\textup{(A4)}, and let \(\mathcal{P}\) be a homogeneous Poisson point process of unit intensity on \(\mathbb{R}^d\). Then there exists a constant
$
\sigma^2\in[0,\infty)
$
such that as $n \to \infty$, 
$ n^{-1}
\Var\!\left[
\beta_q^{r,s}(\cM) (\mathcal{P}_{W_n})
\right]
\to
\sigma^2
$
and 
$
n^{-1/2}
\left(
 \beta_q^{r,s}(\cM)(\mathcal{P}_{W_n})
-
\mathbb{E} [\beta_q^{r,s}(\mathcal{M}) (\mathcal{P}_{W_n})]
\right)
\xrightarrow{d}
\mathcal{N}(0,\sigma^2).
$
\end{corollary}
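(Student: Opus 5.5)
The corollary follows the pattern of the earlier corollaries. We apply Theorem~\ref{thm:clt_main} to $\cM=\cM_{(-,t)}^p$ or $\cM=\cM_{(t,-)}^p$ at the parameters $r$ and $s$.

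The hypotheses of that theorem are admissibility at $r$ and $s$ and measurability of $\beta_q^{r,s}(\cM)$. Admissibility holds pointwise by Proposition~\ref{prop:lp-bmh}, since $t\geq 0$. Measurability is one of the facts the section says it omits. Theorem~\ref{thm:clt_main} then holds for every sequence $\{W_n\}$ satisfying (A1)--(A4), and it gives both the variance asymptotics $n^{-1}\Var\to\sigma^2$ and the convergence $n^{-1/2}(\beta-\EE\beta)\xrightarrow{d}\mathcal{N}(0,\sigma^2)$, with $\sigma^2=\sigma^2_{q,r,s}$.

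For $\cM_{(-,t)}^p$ nothing more is needed, because Proposition~\ref{prop:lp-bmh} verifies its stabilization and boundedness directly.

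The only step that needs care is $\cM_{(t,-)}^p$. In that case the real parameter is the graph connection radius and the $\ell_p$-scale $t$ is fixed. Proposition~\ref{prop:lp-bmh} handles this functor through Proposition~\ref{prop:2para-equiv}, which transfers stabilization and boundedness between the two one-parameter slices of $\cM^p$. I would rely on that reduction.

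One detail there must be checked: the kernel-cokernel bounds for $\cM_{(t,-)}^p$ at radius $r$ involve balls of radius of order $tr$. This is harmless, because Definition~\ref{def:boundedness} allows $\rho$ to depend on the parameter.

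Once both functors are known to be admissible at $r$ and $s$, the corollary is immediate.
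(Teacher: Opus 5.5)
Your proposal is correct and is the paper's own proof: combine Proposition~\ref{prop:lp-bmh}, which gives pointwise admissibility of both $\cM^p_{(-,t)}$ and $\cM^p_{(t,-)}$ for $t\geq 0$, with Theorem~\ref{thm:clt_main}, taking measurability for granted as stated at the start of Section~\ref{sec:example}. Your extra remarks on $\cM^p_{(t,-)}$ (the reduction through Proposition~\ref{prop:2para-equiv} and the parameter-dependent radius of order $tr$) only unpack what is already inside the proof of Proposition~\ref{prop:lp-bmh}, so the corollary itself does not need them.
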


\begin{proof}
The assertion follows from Proposition~\ref{prop:lp-bmh} and Theorem~\ref{thm:clt_main}.
\end{proof}

We next consider relative construction.

\begin{corollary}
\label{cor:relative_graph}
Fix \(p\in[1,\infty]\) and \(0\leq t_1\leq t_2\). Define
\[
\mathcal{R}_{t_1,t_2}^p
:=
\Coker
\left(\cM_{(t_1,-)}^p
\rightarrow \cM_{(t_2,-)}^p
\right)
\colon
\cU
\rightarrow
\Fun
(
\mathbb{R},
\Chnn(\vect_k)
).
\]
Then \(\mathcal{R}_{t_1,t_2}^p\) is pointwise admissible.
\end{corollary}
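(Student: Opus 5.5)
The plan is to deduce the statement from the closure property of admissibility, Proposition~\ref{prop:relative}(3), stated in the introduction. That result says admissibility is preserved under cokernels of structure morphisms. First, I will identify the map $\cM_{(t_1,-)}^p\to\cM_{(t_2,-)}^p$ as a natural transformation between functors $\cU\to\Fun(\RR,\Chnn(\vect_k))$. Its components are induced by the $1$-Lipschitz inclusions $\operatorname{VR}^p_{\leq t_1}(G(F,a))\subseteq \operatorname{VR}^p_{\leq t_2}(G(F,a))$; equivalently, it is the structure morphism of the two-parameter functor $\cM^p$ in its first variable. This morphism is natural in $F$, since enlarging $F$ and enlarging the scale parameter commute as maps of simplicial sets.

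Next, by Proposition~\ref{prop:lp-bmh}, both $\cM_{(t_1,-)}^p$ and $\cM_{(t_2,-)}^p$ are pointwise admissible, because $t_1,t_2\ge 0$. Translation invariance of $\cR^p_{t_1,t_2}$ follows immediately: the isomorphisms $\cM(F+x)\cong\cM(F)$ come from the translation isometry $G(F+x,a)\cong G(F,a)$, so they are compatible with the morphism, and they therefore induce isomorphisms of cokernels.

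For stabilization and boundedness, if Proposition~\ref{prop:relative}(3) is not directly in the needed form, I would argue by hand. Write $\mathcal{X}:=\cM_{(t_1,-)}^p$ and $\mathcal{Y}:=\cM_{(t_2,-)}^p$, and let $w\colon\mathcal{X}\to\mathcal{Y}$ be the morphism above. The add one maps give a morphism $(u^+_{\mathcal{X}},u^+_{\mathcal{Y}})\colon w\to w^+$ in the arrow category, and the add one map of $\cR$ is the induced map $\overline{u}\colon\Coker(w)\to\Coker(w^+)$. Apply the snake lemma to the diagram of two rows $\mathcal{X}\to\mathcal{Y}\to\Coker w\to 0$ and $\mathcal{X}^+\to\mathcal{Y}^+\to\Coker w^+\to 0$, evaluated at a fixed parameter $r$ and degree $j$. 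This gives an exact sequence
\[
\Ker u^+_{\mathcal{Y}}\to\Ker\overline{u}\to\Coker u^+_{\mathcal{X}}\to\Coker u^+_{\mathcal{Y}}\to\Coker\overline{u}\to 0 .
\]
Hence $\dim\Ker\overline{u}+\dim\Coker\overline{u}$ is at most the sum of the kernel and cokernel dimensions for $\mathcal{X}$ and $\mathcal{Y}$. Boundedness then follows by taking the maximum of the two sets of constants $\rho,\tau,c$, adding the $c$'s.

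For stabilization, fix a locally finite $P$. The exact sequence above is natural in $F\in\mathcal{F}(P)$, and all the outer terms stabilize degreewise for $F\supseteq F_0$. The main obstacle is that a five-term sequence does not by itself force the middle term $\Ker\overline{u}$ to stabilize. I would handle this by working with explicit generators. In the present example both add one maps $u^+_{\mathcal{X}}$ and $u^+_{\mathcal{Y}}$ are injective, and they send simplices to simplices. So $\Coker\overline{u}$ and $\Ker\overline{u}$ are spanned by tuples with weight $\le t_2$ in $G(F^+,r)$ but $>t_1$ there, together with tuples of weight $\le t_1$ in $G(F^+,r)$ but $>t_1$ in $G(F,r)$. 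By the argument leading to \eqref{eq:lp-graph-inball}, all of these lie in a fixed ball $B_0(2rt_2)$, and their weights are determined by $P\cap B_0(3rt_2)$. Therefore both $\Ker\overline{u}$ and $\Coker\overline{u}$ are degreewise constant once $F\supseteq P\cap B_0(3rt_2)$. Together with translation invariance and boundedness, this gives pointwise admissibility of $\mathcal{R}^p_{t_1,t_2}$.
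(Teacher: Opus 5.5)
Your main line is exactly the paper's proof: read $\cM^p_{(t,-)}$ as the slice $\cM^p_{t,-}$ of the two-parameter functor, get admissibility of both slices from Proposition~\ref{prop:lp-bmh}, conclude with Proposition~\ref{prop:relative}(3), and obtain translation invariance of the cokernel from translation isomorphisms compatible with the structure morphism (i.e.\ from translation invariance of $\cM^p$ itself). The by-hand fallback is not needed, but two points in it are off: its ``main obstacle'' disappears once you keep $\Ker u^+_{\mathcal{X}}$ at the left end of the snake sequence, since $\Ker\overline{u}$ is then flanked by four stabilizing terms and the five lemma applies as in Lemma~\ref{lem:image_cokernel_kernel-local-cokernel-stability}(2); and your generators of $\Coker\overline{u}$ must also contain $0$ or have weight $>t_2$ in $G(F,r)$, otherwise they are not confined to $B_0(2rt_2)$.
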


\begin{proof}
Translation invariance of $\cM_{(t_1,-)}^p$ and $\cM_{(t_2,-)}^p$ is immediate from that of  
$\cM^p$. By Proposition~\ref{prop:lp-bmh} and Proposition~\ref{prop:relative}(3), the functor
\(\mathcal{R}_{t_1,t_2}^p\) is pointwise admissible.
\end{proof}

\begin{corollary}
\label{cor:clt_PMG_graph}
Fix \(p\in[1,\infty]\), \(q\in\NN\),
\(0\leq r\leq s\), and \(0\leq t_1\leq t_2\). 
Let \(\{W_n\}_{n\in\mathbb{N}}\) be a sequence satisfying conditions
\textnormal{(A1)}--\textnormal{(A4)}, and let \(\mathcal{P}\) be a homogeneous Poisson point process of unit intensity on \(\mathbb{R}^d\). Then there exists a constant
$
\sigma^2\in[0,\infty)
$
such that as $n \to \infty$, 
$ n^{-1}
\Var\!\left[
\beta_q^{r,s}(\mathcal{R}_{t_1,t_2}^p)(\mathcal{P}_{W_n})
\right]
\to
\sigma^2
$
and 
$
n^{-1/2}
\left(
 \beta_q^{r,s}(\mathcal{R}_{t_1,t_2}^p)(\mathcal{P}_{W_n})
-
\mathbb{E} [\beta_q^{r,s}(\mathcal{R}_{t_1,t_2}^p) (\mathcal{P}_{W_n})]
\right)
\xrightarrow{d}
\mathcal{N}(0,\sigma^2).
$
\end{corollary}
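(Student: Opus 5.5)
The statement is Corollary~\ref{cor:clt_PMG_graph}, which is a direct consequence of Theorem~\ref{thm:clt_main}. The plan is to check, for the functor
\[
\mathcal{A}:=\mathcal{R}_{t_1,t_2}^p
=\Coker\bigl(\cM_{(t_1,-)}^p\to\cM_{(t_2,-)}^p\bigr),
\]
the hypotheses of that theorem at the parameters $r$ and $s$. There are two of them: admissibility at $r$ and $s$, and measurability of $\beta_q^{r,s}(\mathcal{R}_{t_1,t_2}^p)$.

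\emph{Admissibility.} By Corollary~\ref{cor:relative_graph}, $\mathcal{R}_{t_1,t_2}^p$ is pointwise admissible, so in particular it is admissible at $r$ and at $s$. That corollary is itself derived as follows.
\begin{itemize}
\item Proposition~\ref{prop:lp-bmh} shows that $\cM_{(t_1,-)}^p$ and $\cM_{(t_2,-)}^p$ are pointwise admissible for $t_1,t_2\ge 0$.
\item Proposition~\ref{prop:relative}(3) shows that admissibility passes to cokernels of morphisms between admissible functors.
\end{itemize}
So the only input needed here is the corollary as stated.

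\emph{Measurability.} The paper asserts at the start of Section~\ref{sec:example} that all persistent Betti number functionals considered there are measurable. The reason is that $\beta_q^{r,s}(\mathcal{R}_{t_1,t_2}^p)(F)$ depends only on the finite combinatorial data of $F$: which pairwise distances are at most $t_1$ or $t_2$. It is therefore a finite-valued function that is piecewise constant on Borel sets of configurations. We take this as given, as the paper does.

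\emph{Conclusion.} Applying Theorem~\ref{thm:clt_main} with the window sequence $\{W_n\}$ satisfying (A1)--(A4) gives the variance asymptotics and the Gaussian limit with some $\sigma^2\in[0,\infty)$, as claimed.

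There is no genuine obstacle at this level. The only substantive point is the closure of admissibility under cokernels, and that is delegated to Proposition~\ref{prop:relative}(3). In that proposition, cokernel stabilization of the add one maps of a cokernel functor should follow from the snake lemma together with stabilization of the kernels and cokernels of the two input add one maps. Boundedness should follow from the dimension estimates obtained from the same exact sequence.
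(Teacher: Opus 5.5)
Your proposal is correct and follows the paper's proof exactly: it takes pointwise admissibility of $\mathcal{R}_{t_1,t_2}^p$ from Corollary~\ref{cor:relative_graph}, assumes measurability as the paper does, and then applies Theorem~\ref{thm:clt_main}. One small slip in your measurability heuristic: the persistence parameter of $\mathcal{R}_{t_1,t_2}^p$ is the graph connection radius, so the relevant Euclidean-distance thresholds are $r$ and $s$, while $t_1,t_2$ bound the $\ell_p$-weights in the graph metrics of $G(F,r)$ and $G(F,s)$.
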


\begin{proof}
The assertion follows from Corollary~\ref{cor:relative_graph} and Theorem~\ref{thm:clt_main}.
\end{proof}

\begin{remark}\label{rem:clt-PMG-graph}
Let \(m\in\mathbb{N}_{>0}\). Since the graph metric is integer-valued,
so is the \(\ell_1\)-weight. Therefore,
$
C_{<m}^1(G(-,-))
=
C_{\leq m-1}^1(G(-,-)).
$
Consequently, by Equation~\eqref{eq:magnitude-construction}, we have
\begin{equation}\label{eq:persistent-magnitude}
\cR_{m-1,m}^1
=
\Coker\left(
C_{\leq m-1}^1(G(-,-))
\rightarrow
C_{\leq m}^1(G(-,-))
\right) \cong C_m^1(G(-,-)).
\end{equation}
Therefore, the functor \(\cR_{m-1,m}^1\) 
provides a persistent version of magnitude homology.
Hence, Corollary~\ref{cor:clt_PMG_graph} yields a central limit theorem
for the persistent magnitude Betti numbers of Poisson random geometric
graphs. In particular, the corresponding central limit theorem for magnitude
Betti numbers follows by specialization.
\end{remark}

\begin{remark}
The authors of \cite{BFLW26} also consider a persistent version of magnitude homology on the category
\(\mathsf{FinMet}_{\mathrm{iso}}\) of finite metric spaces and
isometric embeddings.
Recall that our functor is defined by
$
C_-^1 \colon
\mathsf{FinMet}_1
\rightarrow
\Fun
\left(
\mathbb{R},
\Chnn(\vect_k)
\right).
$
The restriction of \(C_-^1\) to the subcategory
\(\mathsf{FinMet}_{\mathrm{iso}}\) agrees with the persistent version of magnitude
homology construction considered in \cite{BFLW26}.
\end{remark}

%% file: 9_Appendix.tex
\appendix \label{appendix}

\section{Admissible functors for images, kernels, and cokernels}
\label{subsec:closure}

We provide a method for constructing new admissible functors from given ones. 

Let \(\mathcal{I}:=(\mathcal{I},\leq)\) be a filtered poset.
We begin with the following lemma. It is analogous to Proposition~\ref{prop:cone-homology-difference-stability} for mapping cones.

\begin{lemma}
\label{lem:image_cokernel_kernel-local-cokernel-stability}
Let
$
(u,v)\colon f\to g
$
be a morphism in
$
\Mor
\left(
\Fun
(\mathcal{I},\Chnn(\vect_k))
\right)
$, given by the following commutative diagram:
\[
\begin{tikzcd}
C \arrow[r,"u"] \arrow[d,"f"'] & C' \arrow[d,"g"] \\
D \arrow[r,"v"'] & D'.
\end{tikzcd}
\]
Consider the induced morphisms
$w\colon\Image (f)\to\Image (g)$,
$c\colon\Coker (f)\to\Coker (g)
$, and 
$
e\colon\Ker (f)\to\Ker (g)$.
Then the following statements hold.
\begin{enumerate}
    \item If \(u\) (resp. $v$) satisfies cokernel (resp. kernel) stabilization, then so does \(w\).

    \item If \(u\) and \(v\) satisfy cokernel stabilization, then \(c\) satisfies cokernel stabilization. Moreover, if \(v\) satisfies kernel stabilization, then \(c\) satisfies kernel stabilization.

    \item If \(u\) and \(v\) satisfy kernel stabilization, then \(e\) satisfies kernel stabilization. Moreover, if \(u\) satisfies cokernel stabilization, then \(e\) satisfies cokernel stabilization.
\end{enumerate}
\end{lemma}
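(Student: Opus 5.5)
The plan is to reduce everything to two snake-lemma exact sequences together with three permanence properties of degreewise stabilization for diagrams in $\Fun(\mathcal{I},\Ch(\vect_k))$. These properties are stability under subobjects, under quotients, and under extensions. I would first isolate them as an auxiliary claim, since all three parts of the lemma then follow formally.

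\emph{Auxiliary claim.} Let $X$ stabilize degreewise and let $S\subseteq X$ be a subdiagram. Fix $q$ and take $A_q$ beyond which $(X_A)_q\to(X_B)_q$ is an isomorphism. For $A_q\le A\le B$, the maps $(S_A)_q\to(S_B)_q$ are restrictions of isomorphisms, hence injective. So $\dim_k(S_A)_q$ is nondecreasing along $\mathcal{I}$ and bounded by $\dim_k(X_{A_q})_q<\infty$. It therefore attains its maximum at some $A'_q\ge A_q$, and from there on every transition map is an injection between spaces of equal finite dimension, hence an isomorphism.

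Dually, if $X\twoheadrightarrow Q$ is an epimorphism of diagrams, the transition maps of $Q$ beyond $A_q$ are surjective and the dimensions are nonincreasing, so $Q$ stabilizes degreewise. Finally, suppose $0\to K\to M\to Q\to 0$ is exact with $K$ and $Q$ stabilizing degreewise. Take a common upper bound of the two stabilization indices (using that $\mathcal{I}$ is filtered). Beyond it the five lemma, applied degreewise to the map of short exact sequences induced by $A\le B$, shows that $M$ stabilizes. One point needs care: the images and kernels that appear must themselves be subdiagrams or quotient diagrams, i.e.\ functorial in $A$. This holds because kernels, images and cokernels in $\Fun(\mathcal{I},\Ch(\vect_k))$ are computed objectwise.

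\emph{Part (1).} The morphism $w$ is the restriction of $v$ to $\Image(f)\subseteq D$, so $\Ker(w)=\Image(f)\cap\Ker(v)$ is a subdiagram of $\Ker(v)$. For the cokernel, note that $C'\twoheadrightarrow\Image(g)\twoheadrightarrow\Coker(w)$ kills $u(C)$, because $g\circ u=v\circ f$ lands in $w(\Image f)$. Hence $\Coker(u)\twoheadrightarrow\Coker(w)$ is an epimorphism. The auxiliary claim then gives both assertions of (1).

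\emph{Part (2).} Apply the snake lemma to the map of short exact sequences $0\to\Image f\to D\to\Coker f\to0$ and $0\to\Image g\to D'\to\Coker g\to 0$, with vertical maps $w,v,c$. This yields a natural exact sequence
\[
\Ker w\to\Ker v\to\Ker c\to\Coker w\to\Coker v\to\Coker c\to 0 .
\]
\begin{itemize}
\item $\Coker(c)$ is a quotient of $\Coker(v)$, so it stabilizes whenever $v$ satisfies cokernel stabilization.
\item $\Ker(c)$ is an extension of $\Ker(\Coker w\to\Coker v)$ by a quotient of $\Ker(v)$. The first of these is a subobject of $\Coker(w)$, which stabilizes by part (1) when $u$ satisfies cokernel stabilization. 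Hence, when $u$ satisfies cokernel stabilization and $v$ satisfies kernel stabilization, $\Ker(c)$ stabilizes by the extension property.
\end{itemize}

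\emph{Part (3).} Use instead the sequences $0\to\Ker f\to C\to\Image f\to0$ and its analogue for $g$, with vertical maps $e,u,w$. This yields
\[
0\to\Ker e\to\Ker u\to\Ker w\to\Coker e\to\Coker u\to\Coker w\to0 .
\]
\begin{itemize}
\item $\Ker(e)$ is a subobject of $\Ker(u)$.
\item $\Coker(e)$ is an extension of $\Ker(\Coker u\to\Coker w)$, a subobject of $\Coker(u)$, by a quotient of $\Ker(w)$. Here $\Ker(w)$ stabilizes by part (1) as soon as $v$ satisfies kernel stabilization.
\end{itemize}

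The main obstacle I expect is not the diagram chase but the precise form of the extension argument. One must make sure that the snake-lemma sequences are natural in $A\in\mathcal{I}$, so that the pieces really are subdiagrams and quotient diagrams. One must also make sure that the finite-dimensionality bound holds uniformly from a fixed index on; this is exactly what makes the monotone dimension argument terminate.
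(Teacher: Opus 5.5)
Your proposal is correct and takes essentially the same route as the paper. For (1) you use the surjection $\Coker(u)\twoheadrightarrow\Coker(w)$ and the injection $\Ker(w)\hookrightarrow\Ker(v)$ with a monotone-dimension argument, and for (2) and (3) the snake lemma on the image--cokernel (resp.\ kernel--image) short exact sequences. The only difference is packaging: the paper applies the five lemma directly to the four stabilizing terms around $\Ker(c)$, whereas you split the snake sequence into an extension of a subobject of $\Coker(w)$ by a quotient of $\Ker(v)$, which incidentally shows that kernel stabilization of $c$ needs only cokernel stabilization of $u$ and kernel stabilization of $v$.
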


\begin{proof}
Fix \(q\in\NN\).
We first prove (1). Suppose that \(u\) satisfies cokernel stabilization. Then
there exists \(A_q\in\mathcal{I}\) such that, for every
\(A_q\leq A\leq B\), the morphism 
$
\Coker (u_{A})_q
\to
\Coker (u_{B})_q
$
is an isomorphism. It follows that the morphism
$
\Coker (w_{A})_q
\to
\Coker (w_{B})_q
$
is surjective. Hence the dimensions of the vector spaces
\(\Coker (w_{A})_q \) are non-increasing for \(A\geq A_q\).
Since these vector spaces are finite-dimensional, the morphisms above are eventually isomorphisms, and hence \(w\) satisfies cokernel stabilization. 
If $v$ satisfies kernel stabilization, then we can show that  \(w\) satisfies kernel stabilization by a similar argument.

We next prove (2). In general, there is a canonical natural isomorphism
\[
\Coker (c)
\cong
\Coker 
\left(
\Coker (u)
\to
\Coker (v)
\right).
\]
If \(u\) and \(v\) satisfy cokernel stabilization, this isomorphism shows
that \(\Coker (c)\) stabilizes degreewise. Thus \(c\) satisfies cokernel stabilization. Suppose, in addition, that \(v\) satisfies kernel stabilization. We have a
commutative diagram with exact rows
\[
\begin{tikzcd}
0 \arrow[r]
&
\Image (f) \arrow[r] \arrow[d,"w"']
&
D \arrow[r] \arrow[d,"v"]
&
\Coker (f) \arrow[r] \arrow[d,"c"]
&
0
\\
0 \arrow[r]
&
\Image (g) \arrow[r]
&
D' \arrow[r]
&
\Coker (g) \arrow[r]
&
0.
\end{tikzcd}
\]
The snake lemma yields an exact sequence
\[
\Ker (w)
\to
\Ker (v)
\to
\Ker (c)
\to
\Coker (w)
\to
\Coker (v).
\]
By the assumptions and (1), the four terms other than
\(\Ker (c)\) stabilize degreewise. For all sufficiently
large \(A\leq B\), applying the five lemma to the morphism between the
corresponding exact sequences at \(A\) and \(B\) shows that
$
\Ker (c_{A})_q
\to
\Ker (c_{B})_q
$
is an isomorphism. Therefore, \(c\) satisfies kernel stabilization. This
proves (2).

Finally, (3) follows by an argument dual to that in (2)
\end{proof}

We next consider a functor
$
\mathcal{M}\colon
\cU
\to
\Fun
(
\mathbb{R}^2,
\Chnn (\vect_k)
).
$
For each \(a\in\mathbb{R}\), this gives rise to two functors
\[
\mathcal{M}_{a,-}\colon
\cU
\to
\Fun
(
\mathbb{R},
\Chnn (\vect_k)
),
\qquad
F\mapsto\mathcal{M}(F)(a,-),
\]
and
\[
\cM_{-,a}\colon
\cU
\to
\Fun
(
\mathbb{R},
\Chnn (\vect_k)
),
\qquad
F\mapsto\mathcal{M}(F)(-,a).
\]
The following proposition shows that the two parameter directions play symmetric roles with respect to add one kernel-cokernel stabilization and add one kernel-cokernel boundedness.

\begin{proposition}
\label{prop:2para-equiv}
We have the following.
\begin{enumerate}
    \item For every \(a,b\in\mathbb{R}\), the functor
    \(\cM_{a,-}\) satisfies add one kernel-cokernel
    stabilization at \(b\) if and only if the functor
    \(\cM_{-,b}\) satisfies add one kernel-cokernel
    stabilization at \(a\).

    \item The functor \(\cM_{a,-}\) satisfies add one
    kernel-cokernel boundedness for every \(a\in\mathbb{R}\) if and
    only if the functor \(\cM_{-,b}\) satisfies add one
    kernel-cokernel boundedness for every \(b\in\mathbb{R}\).
\end{enumerate}
\end{proposition}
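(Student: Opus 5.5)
The proof is pure unwinding of definitions. Both conditions only concern the chain complexes \(\cM(F)(a,b)\) and the add one maps \(\cM(F)(a,b)\to\cM(F^+)(a,b)\) at a single parameter pair \((a,b)\in\mathbb{R}^2\). The plan is to show that the data entering each condition for \(\cM_{a,-}\) at \(b\) is literally the same as the data for \(\cM_{-,b}\) at \(a\).

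First, fix \(a,b\) and unwind the definitions. By construction \(\cM_{a,-}(F)(b)=\cM(F)(a,b)=\cM_{-,b}(F)(a)\) for every \(F\in\cU\). The add one map \(u^+\colon\cM\to\cM\circ\operatorname{id}^+_{\cU}\) is the natural transformation in \(\Fun(\mathbb{R}^2,\Chnn(\vect_k))\) induced by \(F\subseteq F^+\). Evaluating at \((a,b)\), the component \(u^+_{F,b}\) for \(\cM_{a,-}\) and the component \(u^+_{F,a}\) for \(\cM_{-,b}\) are both \(\cM(F\subseteq F^+)_{(a,b)}\). Hence, for each locally finite \(P\), the two morphisms
\[
(\cM_{a,-})_{P,b}\to(\cM_{a,-})^+_{P,b}
\quad\text{and}\quad
(\cM_{-,b})_{P,a}\to(\cM_{-,b})^+_{P,a}
\]
coincide as morphisms in \(\Fun(\mathcal{F}(P),\Chnn(\vect_k))\). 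Their structure maps along \(F\subseteq F'\) also agree, since both are \(\cM(F\subseteq F')_{(a,b)}\). So their kernels and cokernels are the same diagrams, and degreewise stabilization of one is degreewise stabilization of the other. This gives (1).

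For (2), the same identification shows that
\[
\dim_k\Ker((u^+_{F,b})_j)+\dim_k\Coker((u^+_{F,b})_j)
\]
for \(\cM_{a,-}\) equals the corresponding quantity for \(\cM_{-,b}\) at \(a\). The difference lies in how the bounding functions are indexed. For \(\cM_{a,-}\), Definition~\ref{def:boundedness} supplies functions \(\rho_a(t)\), \(\tau_a(j,t)\), \(c_a(j,t)\), one family for each \(a\). Assume this holds for every \(a\). For \(\cM_{-,b}\) I then set
\[
\rho'(t):=\rho_t(b),\qquad \tau'(j,t):=\tau_t(j,b),\qquad c'(j,t):=c_t(j,b).
\]
This choice, made pointwise for each \(b\), gives the needed bound at every parameter \(t\). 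The converse direction is symmetric.

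The only subtlety is this bookkeeping in (2). The definition allows the functions to depend arbitrarily on the parameter, with no continuity or uniformity required, so reassigning the indices is legitimate. I would state this explicitly, because it is the only point where the "for every \(a\)" quantifier is actually used.
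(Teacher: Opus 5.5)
Your proof is correct and follows the same approach as the paper, which simply says that both statements follow by interchanging the roles of the two coordinates $a$ and $b$. You make explicit what that one-liner leaves implicit: for each locally finite $P$ the two add one maps coincide as diagrams over $\mathcal{F}(P)$, and in (2) the bounding functions must be reindexed as $\rho'(t):=\rho_t(b)$, $\tau'(j,t):=\tau_t(j,b)$, $c'(j,t):=c_t(j,b)$. That reindexing is exactly where the quantifier over all $a$ (resp.\ all $b$) is needed.
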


\begin{proof}
Both statements follow by interchanging the roles of the two coordinates
\(a\) and \(b\).
\end{proof}

Consequently, when verifying add one kernel-cokernel stabilization and
add one kernel-cokernel boundedness, it suffices to consider only one
of the two types of slices.

We next consider the natural transformation between two slices of a two-parameter functor.

\begin{proposition}\label{prop:relative}
Fix \(t_1\leq t_2\) and consider the natural transformation
$
\cM_{t_1,-}
\to
\cM_{t_2,-}
$. 
\begin{enumerate}
    \item If \(\cM_{t_1,-}\) and \(\cM_{t_2,-}\)
satisfy add one kernel-cokernel stabilization at \(r\), then 
$
\Image 
\left(
\cM_{t_1,-}
\to
\cM_{t_2,-}
\right)$, $\Coker
\left(
\cM_{t_1,-}
\to
\cM_{t_2,-}
\right),
$
and
$
\Ker
\left(
\cM_{t_1,-}
\to
\cM_{t_2,-}
\right)
$
satisfy add one kernel-cokernel stabilization at \(r\).

\item If $
\cM_{t_1,-}$ and $ \cM_{t_2,-}
$
satisfy add one kernel-cokernel boundedness, then 
$
\Image 
\left(
\cM_{t_1,-}
\to
\cM_{t_2,-}
\right)$, $
\ \Coker
\left(
\cM_{t_1,-}
\to
\cM_{t_2,-}
\right),
$
and
$
\Ker
\left(
\cM_{t_1,-}
\to
\cM_{t_2,-}
\right)
$
satisfy add one kernel-cokernel boundedness.

\item If $
\cM_{t_1,-}$ and $ \cM_{t_2,-}
$ are admissible at $r$, then then 
$
\Image 
\left(
\cM_{t_1,-}
\to
\cM_{t_2,-}
\right)$, $
\ \Coker
\left(
\cM_{t_1,-}
\to
\cM_{t_2,-}
\right),
$
and
$
\Ker
\left(
\cM_{t_1,-}
\to
\cM_{t_2,-}
\right)
$ are admissible at $r$.
\end{enumerate}
\end{proposition}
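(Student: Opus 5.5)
Write $\phi\colon\cM_{t_1,-}\to\cM_{t_2,-}$ for the natural transformation, and let $\mathcal{N}$ denote any one of $\Image(\phi)$, $\Coker(\phi)$, $\Ker(\phi)$. The plan is to reduce everything to Lemma~\ref{lem:image_cokernel_kernel-local-cokernel-stability} (for stabilization) and to elementary linear algebra (for boundedness). Translation invariance for (3) needs a separate remark. Since $\phi$ is natural with respect to inclusions of point sets, a translation isomorphism $\cM(F+x)\cong\cM(F)$ compatible with the two slices induces isomorphisms on $\Image$, $\Coker$ and $\Ker$. I will assume, as in the examples, that the translation isomorphisms are natural in the slice parameter; this is the implicit reading of translation invariance for $\cM$. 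Granting (1) and (2), statement (3) then follows.

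For (1), fix a locally finite $P$ and the parameter $r$. Evaluating the add one maps at $r$ gives a commutative square in $\Fun(\mathcal{F}(P),\Chnn(\vect_k))$:
\[
\begin{tikzcd}
\cM_{t_1,-,P,r} \arrow[r,"u"] \arrow[d,"f"'] & \cM^+_{t_1,-,P,r} \arrow[d,"g"] \\
\cM_{t_2,-,P,r} \arrow[r,"v"'] & \cM^+_{t_2,-,P,r}.
\end{tikzcd}
\]
Here $u,v$ are the add one maps and $f,g$ are the components of $\phi$. Naturality of $\phi$ in $F$, applied to $F\subseteq F^+$, gives commutativity. The add one map of $\mathcal{N}$ at $r$ is exactly the induced map $w$, $c$ or $e$ between $\Image$, $\Coker$ or $\Ker$ of $f$ and $g$, because these constructions are computed objectwise. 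By hypothesis $u$ and $v$ satisfy kernel-cokernel stabilization. Hence Lemma~\ref{lem:image_cokernel_kernel-local-cokernel-stability}(1)--(3) show that $w$, $c$ and $e$ each satisfy both kernel and cokernel stabilization. We must check that the hypotheses of every clause are met; they are, since both $u$ and $v$ have both properties.

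For (2), fix $F$ (by Remark~\ref{rem:origin-already-present} we may take $0\notin F$), a degree $j$ and a parameter $t$. We apply the linear-algebra content of the above square degreewise at this single $F$, and bound $\dim\Ker$ and $\dim\Coker$ of $w_j,c_j,e_j$ by those of $u_j,v_j$. For the cokernel of $c$ we use $\Coker(c)\cong\Coker(\Coker u\to\Coker v)$, so $\dim\Coker(c_j)\le\dim\Coker(v_j)$. For the kernel of $c$ we use the snake sequence $\Ker v\to\Ker c\to\Coker w$, which gives $\dim\Ker(c_j)\le \dim\Ker(v_j)+\dim\Coker(w_j)$.

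For $w$, the surjection $\Coker(u)\to\Coker(w)$ gives $\dim\Coker(w_j)\le\dim\Coker(u_j)$. Dually, $\Ker(w)\hookrightarrow\Ker(v)$, because $\Image f\subseteq D$ and $w$ is the restriction of $v$; this gives $\dim\Ker(w_j)\le\dim\Ker(v_j)$. The kernel $e$ is handled dually: $\dim\Ker(e_j)\le\dim\Ker(u_j)$, and $\dim\Coker(e_j)\le\dim\Coker(u_j)+\dim\Ker(w_j)$. In every case the total is at most $2\bigl(\dim\Ker u_j+\dim\Coker u_j+\dim\Ker v_j+\dim\Coker v_j\bigr)$.

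We now choose the data for $\mathcal{N}$. Take $\rho_{\mathcal{N}}(t)=\max(\rho_1(t),\rho_2(t))$, $\tau_{\mathcal{N}}=\max(\tau_1,\tau_2)$ and $c_{\mathcal{N}}=2(c_1+c_2)$. Since $1+\#(F\cap B_0(\rho_i))\ge1$, raising the exponent and enlarging the radius only increases the bound, so this choice works. The main obstacle is keeping track of the exact sequences in these dimension estimates correctly, especially the snake-lemma terms for $c$ and $e$. Everything else is formal.
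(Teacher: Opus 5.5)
Your proposal is correct and follows essentially the paper's argument. Part (1) is the same application of Lemma~\ref{lem:image_cokernel_kernel-local-cokernel-stability} to the square formed by the add one maps and the slice maps. Part (2) uses the same injection $\Ker(w)\hookrightarrow\Ker(v)$, surjection $\Coker(u)\twoheadrightarrow\Coker(w)$ and snake-lemma estimates, and you write the boundedness data out explicitly where the paper does not.

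Your separate handling of translation invariance in (3) covers a point the paper's one-line proof of (3) omits, and the compatibility you assume between the translation isomorphisms and the slice map is genuinely needed. Under the paper's bare definition, mere existence of isomorphisms $\cM_{t_i,-}(F+x)\cong\cM_{t_i,-}(F)$ does not force the image, kernel or cokernel to be translation invariant.
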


\begin{proof}
    We first prove (1). 
Let \(P\subseteq\mathbb{R}^d\) be locally finite. We obtain a commutative diagram in
\(\Fun(\mathcal{F}(P) ,\Chnn (\vect_k))\):
\[
\begin{tikzcd}
\cM_{P,t_1,r}
    \arrow[r,"\add_{P,t_1,r}"]
    \arrow[d,""']
&
\cM_{P,t_1,r}^{+}
    \arrow[d,""]
\\
\cM_{P,t_2,r}
    \arrow[r,"\add_{P,t_2,r}"']
&
\cM_{P,t_2,r}^{+},
\end{tikzcd}
\]
where vertical morphisms are the structure  morphisms. The desired assertion follows immediately from  Lemma~\ref{lem:image_cokernel_kernel-local-cokernel-stability}.

We next prove (2). 
Set
$
\mathcal{W}
:=
\Image 
\left(
\cM_{t_1,-}
\to
\cM_{t_2,-}
\right),
\
\mathcal{R}
:=
\Coker
\left(
\cM_{t_1,-}
\to
\cM_{t_2,-}
\right),
$
and
$
\mathcal{K}
:=
\Ker
\left(
\cM_{t_1,-}
\to
\cM_{t_2,-}
\right).
$
Fix \(q\in\NN\), \(F\in\cU\), and \(r\in\mathbb{R}\). Let
$
u^+\colon
\cM(F)(t_1,r)_q
\to
\cM(F^+)(t_1,r)_q
$
and
$
v^+\colon
\cM(F)(t_2,r)_q
\to
\cM(F^+)(t_2,r)_q,
$
and denote the induced add one maps for
\(\mathcal{W}\), \(\mathcal{R}\), and \(\mathcal{K}\) by
\(w^+\), \(c^+\), and \(e^+\), respectively.

We have a natural injective map $
\Ker(w^+)  \hookrightarrow  \Ker(v^+)
$ and a surjective map
$
\Coker(u^+)
\twoheadrightarrow
\Coker(w^+).
$
Consequently,
\[
\dim_k\Coker(w^+)
+
\dim_k\Ker(w^+)
\leq
\dim_k\Coker(u^+)
+
\dim_k \Ker(v^+)
.
\]
Since the functors $
\cM_{t_i,-}$, $i=1,2$, satisfy add one kernel-cokernel boundedness, 
this implies that \(\mathcal{W}\) satisfies add one kernel-cokernel boundedness.

We also have a commutative diagram with exact rows
\[
\begin{tikzcd}
0 \arrow[r]
&
\mathcal{W}(F)(r)_q
    \arrow[r]
    \arrow[d,"w^+"']
&
\cM(F)(t_2,r)_q
    \arrow[r]
    \arrow[d,"v^+"]
&
\mathcal{R}(F)(r)_q
    \arrow[r]
    \arrow[d,"c^+"]
&
0
\\
0 \arrow[r]
&
\mathcal{W}(F^+)(r)_q
    \arrow[r]
&
\cM(F^+)(t_2,r)_q
    \arrow[r]
&
\mathcal{R}(F^+)(r)_q
    \arrow[r]
&
0.
\end{tikzcd}
\]
The snake lemma yields an exact sequence
\[
0
\to
\Ker(w^+)
\to
\Ker(v^+)
\to
\Ker(c^+)
\to
\Coker(w^+)
\to
\Coker(v^+)
\to
\Coker(c^+)
\to
0.
\]
It follows that
\[
\begin{aligned}
\dim_k\Coker(c^+)
+
\dim_k\Ker(c^+)
&\leq
\dim_k\Coker(v^+)
+
\dim_k\Ker(v^+)
+
\dim_k\Coker(w^+)
\\
&\leq
\dim_k\Ker(v^+)
+
\dim_k\Coker(v^+)
+
\dim_k\Coker(u^+).
\end{aligned}
\]
Hence, \(\mathcal{R}\) satisfies add one kernel-cokernel boundedness.
An argument dual to that for \(\mathcal{R}\) shows that \(\mathcal{K}\) also satisfies add one kernel-cokernel boundedness.

(3) is follows from (2) and (3).
\end{proof}

\section{Extended persistent Betti numbers}
\label{sec:extended-persistent-betti-numbers}

Extended persistent Betti numbers, introduced in~\cite[Definition~4.6]{JKP26}, are invariants that generalize ordinary persistent Betti numbers. 

In this appendix, we formulate a transformation of $\RR$-indexed chain complexes that realizes an extended persistent Betti number as an ordinary persistent Betti number following \cite{JKP26}. We then show that this transformation preserves translation invariance, add one kernel-cokernel stabilization, and the add one kernel-cokernel boundedness condition. Consequently, provided that the measurability of the resulting persistent Betti number functional is verified separately, Theorem~\ref{thm:clt_main} can also be applied to extended persistent Betti numbers. 

Fix \(j\in\mathbb{Z}\) and \(t\geq 0\). Define a functor
$
S_{j,t}\colon
\Fun
(
\mathbb{R},
\Ch(\Vect_k)
)
\rightarrow
\Fun
(
\mathbb{R},
\Ch(\Vect_k)
)
$
as follows. For
$
C\in
\Fun
(
\mathbb{R},
\Ch(\Vect_k)
), 
$
define
\[
(S_{j,t}C)_q(a)
:=
\begin{cases}
C_q(a), & q\leq j,\\
C_q(a-t), & q\geq j+1
\end{cases}
\]
for each $a \in \RR$. The boundary map in degree \(j+1\) is defined as the composite
\[
C_{j+1}(a-t)
\xrightarrow{\partial}
C_j(a-t)
\xrightarrow{C_j(a-t\leq a)}
C_j(a).
\]
All the other boundary maps are inherited from \(C(a)\) or \(C(a-t)\), as appropriate.
For \(a\leq b\), the structure map of \(S_{j,t}C\) is given in degree \(q\) by
\[
(S_{j,t}C)_q(a\leq b)
:=
\begin{cases}
C_q(a\leq b), & q\leq j,\\
C_q(a-t\leq b-t), & q\geq j+1.
\end{cases}
\]
For a morphism $f \colon C \to D $ in $\Fun
(\mathbb{R},\Ch(\Vect_k))$, we define 
\[
S_{j,t} (f)_q (a) := \begin{cases}
    f_{q,a} \colon C_q (a) \to D_q (a)  &\text{ if }  q \leq j, \\
    f_{q,a-t} \colon C_q(a -t) \to D_q (a-t)  &\text{ if } q\geq j +1
\end{cases} 
\]
for each $a \in \RR$.

For \(C\in
\Fun
(\mathbb{R},\Ch(\Vect_k))\), \(q\in\ZZ\), 
and \(r,s\in\mathbb{R}\), the $q$-th \((r,s)\)-extended persistent Betti number is given by
\[
\beta_{q,\mathrm{ext}}^{r,s}(C)
:=
\begin{cases}
\beta_q^{r,s}(C), & r\leq s,\\
\beta_q^{r,r}(S_{q,r-s}C), & r>s.
\end{cases}
\]
Suppose that \(r>s\). By the definition of \(S_{q,r-s}\), we have
\[
H_q((S_{q,r-s}C)(r))
=
\frac{
Z_q(C(r))
}{
\Image 
(
C_{q+1}(s)
\xrightarrow{\partial}
C_q(s)
\xrightarrow{C_q(s\leq r)}
C_q(r)
)
}.
\]
If the structure map \(C_q(s)\to C_q(r)\) is injective, then
$
\beta_q^{r,r}(S_{q,r-s}C)
=
\dim_k Z_q(C(r))
-
\dim_k B_q(C(s)).
$
Thus, in the setting of filtered chain complexes, this quantity agrees with the \((r,s)\)-extended persistent Betti number introduced in \cite{JKP26}. 
Note that, unlike the usual persistent Betti numbers, we do not impose
the condition \(r\leq s\). See \cite{JKP26} for an interpretation of
this invariant in the context of persistent homology.

Now let
$
\mathcal{A}\colon
\cU
\rightarrow
\Fun
(
\mathbb{R},
\Chnn(\Vect_k)
)
$
be a functor and define 
\begin{equation}\label{eq:trans-functor}
\mathcal{A}^{(q,t)}
:=
S_{q,t}\circ\mathcal{A}.     
\end{equation}



\begin{proposition}
\label{prop:extended-preservation}
Let \(q\in\mathbb{Z}\) and \(t\geq 0\). Then the following statements hold.
\begin{enumerate}
    \item If \(\mathcal{A}\) is translation invariant, then so is 
    \(\mathcal{A}^{(q,t)}\).

    \item If \(\mathcal{A}\) satisfies pointwise  add one kernel-cokernel stabilization, then so does \(\mathcal{A}^{(q,t)}\). 

    \item If \(\mathcal{A}\) satisfies the add one kernel-cokernel  boundedness condition, then so does  \(\mathcal{A}^{(q,t)}\).

\end{enumerate}    
In particular, if $\cA$ is pointwise admissible, then so is  \(\mathcal{A}^{(q,t)}\).
\end{proposition}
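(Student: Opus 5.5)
The key observation is that $S_{j,t}$ acts degreewise: for each degree $q'$, the vector space $(S_{q,t}C)_{q'}(a)$ is either $C_{q'}(a)$ or $C_{q'}(a-t)$, and likewise for morphisms. The plan is to verify the three properties degreewise, after checking that the add one map of $\mathcal{A}^{(q,t)}$ is obtained by applying $S_{q,t}$ to the add one map of $\mathcal{A}$. Since $S_{q,t}$ is a functor, naturality of $u^+$ for $\mathcal{A}$ gives a natural transformation $S_{q,t}(u^+)\colon \mathcal{A}^{(q,t)}\to(\mathcal{A}^{(q,t)})^+$. Its components are exactly the add one maps of $\mathcal{A}^{(q,t)}$, because $(\mathcal{A}^{(q,t)})^+ = S_{q,t}\circ\mathcal{A}\circ\operatorname{id}^+_{\cU}$. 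Concretely, in degree $q'$ and parameter $a$,
\[
\bigl(u^{+,(q,t)}_{F,a}\bigr)_{q'} =
\begin{cases}
(u^+_{F,a})_{q'}, & q'\leq q,\\
(u^+_{F,a-t})_{q'}, & q'\geq q+1.
\end{cases}
\]
Kernels and cokernels of chain maps are computed degreewise, so
\[
\Ker\bigl(u^{+,(q,t)}_{F,a}\bigr)_{q'} = \Ker\bigl((u^+_{F,a'})_{q'}\bigr),
\]
with $a'=a$ or $a-t$, and the same holds for cokernels.

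\textbf{Step 1 (translation invariance).} Apply $S_{q,t}$ to the isomorphism $\mathcal{A}(F+x)\cong\mathcal{A}(F)$; a functor preserves isomorphisms.

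\textbf{Step 2 (stabilization).} Fix a locally finite $P$ and $a\in\RR$. The add one map $u^{+,(q,t)}_{P,a}$ in $\Fun(\mathcal{F}(P),\Chnn(\vect_k))$ agrees in degree $q'$ with $u^+_{P,a}$ when $q'\leq q$, and with $u^+_{P,a-t}$ when $q'\geq q+1$. This agreement is compatible with the structure maps along $F\subseteq F'$. Definition~\ref{def:stab} is a degreewise condition: one needs a stabilization index $A_{q'}$ for each degree separately. Pointwise stabilization of $\mathcal{A}$ at both $a$ and $a-t$ supplies these indices degree by degree, so the kernel and cokernel of $u^{+,(q,t)}_{P,a}$ stabilize degreewise. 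This is where the word ``pointwise'' in the hypothesis is genuinely needed, since two parameters $a$ and $a-t$ are involved.

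\textbf{Step 3 (boundedness).} Let $\rho,\tau,c$ be the functions given for $\mathcal{A}$ by Definition~\ref{def:boundedness}. Define
\[
\rho'(a)=\max\{\rho(a),\rho(a-t)\},\quad \tau'(j,a)=\max\{\tau(j,a),\tau(j,a-t)\},\quad c'(j,a)=\max\{c(j,a),c(j,a-t)\}.
\]
Monotonicity of $x\mapsto(1+x)^{m}$ in both the count of points in $B_0(\rho')$ and the exponent (the base is at least $1$) then gives the required bound for $\mathcal{A}^{(q,t)}$. If one prefers, one can simply take $\rho'(a)=\rho(a)$ for degrees $j\leq q$ and $\rho(a-t)$ for degrees $j\geq q+1$, except that $\rho'$ is not allowed to depend on $j$; taking the maximum avoids this issue. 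The final ``in particular'' statement combines Steps 1–3.

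The main obstacle is only bookkeeping. The degree-$(q+1)$ differential is twisted by the structure map $C_q(a-t\leq a)$, so one must confirm that $S_{q,t}(u^+)$ is a genuine chain map, so that its kernel and cokernel complexes make sense. This follows from naturality of $u^+$ with respect to the $\RR$-structure maps, since $u^+$ is a morphism in $\Fun(\RR,\Chnn)$. Beyond that, nothing beyond degreewise reasoning is required.
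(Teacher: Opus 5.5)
Your proposal is correct and follows essentially the same route as the paper. In both, the add one map of $\mathcal{A}^{(q,t)}$ at parameter $a$ agrees degreewise with that of $\mathcal{A}$ at $a$ or at $a-t$, so stabilization transfers degree by degree and the boundedness functions are taken as pointwise maxima. Your additional checks, that $S_{q,t}(u^+)$ is a chain map and that $\rho'$ may not depend on $j$, are details the paper leaves implicit.
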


\begin{proof}
(1) follows immediately from the definition.
We next prove (2). Let \(P\subseteq\mathbb{R}^d\) be locally finite and let \(r\in\mathbb{R}\). By assumption, both
$
u_{P,r}^{+}\colon
\mathcal{A}_{P,r}
\rightarrow
\mathcal{A}_{P,r}^{+}
$
and
$
u_{P,r-t}^{+}\colon
\mathcal{A}_{P,r-t}
\rightarrow
\mathcal{A}_{P,r-t}^{+}
$
 satisfy kernel-cokernel stabilization. By the definition
of \(\mathcal{A}^{(q,t)}\), each degree of its add one map at \(r\)
is given by the corresponding degree of either \(u_{P,r}^{+}\) or
\(u_{P,r-t}^{+}\). Taking a common upper bound of their stabilization
indices, we conclude that the add one map of \(\mathcal A^{(q,t)}\) at \(r\)
  satisfies kernel-cokernel stabilization.
Since \(r\) was
  arbitrary, \(\mathcal A^{(q,t)}\) satisfies pointwise add one
  kernel-cokernel stabilization.
Finally, suppose that \(\mathcal{A}\) satisfies the add one kernel-cokernel boundedness condition. For every \(F\in\cU\), \(r\in\mathbb{R}\), and \(j\in\mathbb{Z}\), the kernel and cokernel of the degree-\(j\) component of the add one map $ \mathcal{A}^{(q,t)}(F)(r)
\rightarrow
(\mathcal{A}^{(q,t)})^{+}(F)(r)$ are those of either
$(u_{F,r}^{+})_j$ or $(u_{F,r-t}^{+})_j$, according to the degree. The required boundedness estimate therefore follows by taking the pointwise maxima of the bounds at \(r\) and \(r-t\).
\end{proof}